\documentclass[onefignum,onetabnum]{siamart190516}

\usepackage{geometry}
\allowdisplaybreaks[4]
\geometry{left=2cm,right=1.5cm,top=2cm,bottom=2cm}



\usepackage{lipsum}
\usepackage{amsfonts}
\usepackage{graphicx}
\usepackage{epstopdf}
\usepackage{algorithmic}
\ifpdf
  \DeclareGraphicsExtensions{.eps,.pdf,.png,.jpg}
\else
  \DeclareGraphicsExtensions{.eps}
\fi


\newsiamremark{remark}{Remark}
\newsiamremark{hypothesis}{Hypothesis}
\crefname{hypothesis}{Hypothesis}{Hypotheses}
\newsiamthm{claim}{Claim}

\headers{An Example Article}{D. Doe, P. T. Frank, and J. E. Smith}

\title{An Example Article\thanks{Submitted to the editors DATE.
\funding{This work was funded by the Fog Research Institute under contract no.~FRI-454.}}}

\author{Dianne Doe\thanks{Imagination Corp., Chicago, IL 
  (\email{ddoe@imag.com}, \url{http://www.imag.com/\string~ddoe/}).}
\and Paul T. Frank\thanks{Department of Applied Mathematics, Fictional University, Boise, ID 
  (\email{ptfrank@fictional.edu}, \email{jesmith@fictional.edu}).}
\and Jane E. Smith\footnotemark[3]}

\usepackage{amsopn}


\usepackage{subcaption}

\newcommand{\bfd}{{\bf d}}
\newcommand{\bfn}{{\bf n}}
\newcommand{\bfzeta}{\boldsymbol{\zeta}}

\newcommand{\vertiii}[1]{{\left\vert\kern-0.25ex\left\vert\kern-0.25ex\left\vert #1
    \right\vert\kern-0.25ex\right\vert\kern-0.25ex\right\vert}}
\newcommand{\verti}[1]{{\left\vert #1
    \right\vert}}
\ifpdf
\headers{Trilinear IFEM for 3D Interface Problems}{R. Guo and X. Zhang}

\title{Solving Three-Dimensional Interface Problems with Immersed Finite Elements: A-Priori Error Analysis}

\author{Ruchi Guo\thanks{Department of Mathematics, The Ohio State University, Columbus, OH 43210
  \email{guo.1778@osu.edu}.}
\and Xu Zhang\thanks{Department of Mathematics, Oklahoma State University, Stillwater, OK 74078
  \email{xzhang@okstate.edu}.}}


\externaldocument{ex_supplement}

\newcommand{\aver}[1]{\left\{\!\!\left\{#1\right\}\!\!\right\}}

\newcommand{\jump}[1]{\left[\!\left[#1\right]\!\right]}

\DeclareFontEncoding{FMS}{}{}
\DeclareFontSubstitution{FMS}{futm}{m}{n}
\DeclareFontEncoding{FMX}{}{}
\DeclareFontSubstitution{FMX}{futm}{m}{n}
\DeclareSymbolFont{fouriersymbols}{FMS}{futm}{m}{n}
\DeclareSymbolFont{fourierlargesymbols}{FMX}{futm}{m}{n}
\DeclareMathDelimiter{\tbar}{\mathord}{fouriersymbols}{152}{fourierlargesymbols}{147}

\begin{document}

\maketitle

\begin{abstract}
  In this paper, we develop and analyze a trilinear immersed finite element method for solving three-dimensional elliptic interface problems. The proposed method can be utilized on interface-unfitted meshes such as Cartesian grids consisting of cuboids. We establish the trace and inverse inequalities for trilinear IFE functions for interface elements with arbitrary interface-cutting configuration. Optimal a priori error estimates are rigorously proved in both energy and $L^2$ norms. Numerical examples are provided not only to verify our theoretical results but also to demonstrate the applicability of this IFE method in tackling some real-world 3D interface models. 
\end{abstract}

\begin{keywords}
  three-dimensional interface problem, unfitted mesh method, immersed finite element, optimal error estimation
\end{keywords}

\begin{AMS}
  35R05, 65N15, 65N30
\end{AMS}

\section{Introduction}
Interface problems are ubiquitous. Many real-world applications in fracture mechanics, fluid mechanics, and material science involve multiple mediums and can be considered as three-dimensional (3D) interface problems. For mathematicians and computational scientists, partial differential equations (PDEs) are often used to model these problems. Usually, these governing equations have discontinuous coefficients that represent the different material properties. 

To solve interface problems, in general, there are two classes of numerical methods. The first class of methods uses interface-fitted meshes, i.e., the mesh must be tailored to fit the interface. Methods of this type include classical finite element method (FEM) \cite{1998ChenZou}, discontinuous Galerkin method \cite{2002ArnoldBrezziCockburnMarini}, and virtual element method \cite{2017ChenWeiWen}, to name only a few. The second class of numerical methods uses interface-unfitted meshes that are independent of the interface.  Structured meshes such as the Cartesian mesh are often used in these methods. An immediate benefit of these unfitted-mesh methods is the avoidance of re-mesh when solving a dynamic problem with evolving interfaces. For example, this feature can be particularly advantageous in simulating multi-phase fluid flow \cite{1997HouLiOsherZhao}, crystal growth \cite{2004BanschHauberLakkisLiVoigt}, solving geometric inverse problems \cite{GuoLinLin2017} and so on. We refer readers to \cite{2006LiIto} for various applications. Moreover, the mesh generation can be especially challenging in the 3D case since the geometry and topology can be rather complicated such as those in biomedical image \cite{2009FormaggiaQuarteroniVeneziani} and geophysical image \cite{2014DassiPerottoFormaggiaRuffo}. 

In the past few decades, there are many numerical methods introduced for solving interface problems based on unfitted meshes. In the finite difference framework, there are Peskin's immersed boundary method \cite{2002Peskin}, immersed interface method \cite{1994LevequeLi}, matched interface and boundary method \cite{2006ZhouZhaoFeigWei}, to name only a few. In the framework of finite element methods (FEMs), there are general FEM \cite{1994BabuskaCalozOsborn}, Cut-FEM \cite{2015BurmanClaus}, multi-scale FEMs \cite{2010ChuGrahamHou,1999HouWuCai}, extended FEM \cite{2001DolbowMoesBelytschko},  partition of unity method  \cite{1996BabuskaMelenk}, and immersed FEM \cite{1998Li}, etc.

The immersed FEM was first developed in \cite{1998Li} for solving one-dimensional (1D) elliptic interface problem, in which the lowest order IFE function was developed and analyzed. The fundamental idea is to construct some special shape functions capturing the jump behavior of the exact solution. Since then, IFE method has been extended to higher-order approximation \cite{2009AdjeridLin, 2017CaoZhangZhang} in 1D, and two-dimensional (2D) interface problems \cite{2016GuoLin,2019GuoLin,2016GuzmanSanchezSarkis,2019HeZhang,2015LinLinZhang, 2019LinSheenZhang}, and 3D interface problems \cite{2005KafafyLinLinWang, 2010VallaghePapadopoulo, 2016HanWangHeLinWang, 2020GuoLin}. Besides the classical second-order elliptic equation, IFE methods have been applied in a wide variety of interface problems, such as the linear elasticity system \cite{2012LinZhang,2018GuoLinLin}, moving interface problems \cite{2013HeLinLinZhang,2013LinLinZhang1}, interface inverse problems \cite{GuoLinLin2017}, and stochastic interface models \cite{2016ZhangLiZhang}.

So far, most of IFE method in literature deal with 2D interface problems. Very few tackles the real 3D interface problems. In \cite{2005KafafyLinLinWang}, a linear IFE method was introduced on unfitted tetrahedral meshes, and was then used in \cite{2016HanWangHeLinWang} for simulating plasma–lunar surface interactions. In \cite{2010VallaghePapadopoulo}, a trilinear element was introduced on cuboidal mesh for solving the electroencephalography forward problem. However, there are no theoretical results for either of these methods. Recently, in \cite{2020GuoLin}, the authors reconstructed trilinear IFE functions on cuboidal meshes based on the actual interface surface. The unisolvency of the trilinear IFE functions was shown using the invertibility of a Sherman-Morrison matrix. The maximum angle condition was employed in the construction procedure to guarantee the optimal approximation capabilities of the trilinear IFE spaces, and the rigorous proof was also given through detailed geometrical analysis.

As most of the IFE spaces in the literature, the global IFE functions in \cite{2020GuoLin} are discontinuous across interface faces which can cause certain nonconformity and loss of convergence order if the standard Galerkin scheme is used. The partially penalized IFE (PPIFE) scheme has been widely used to address this issue \cite{2015LinLinZhang,2018GuoLinLin} in 2D situation, and the basic idea is to use interior penalties to handle discontinuities only on interface edges/faces. The PPIFE method was first introduced in \cite{2015LinLinZhang} for the 2D elliptic interface problem in which the analysis relies on piecewise $H^3$ regularity of the solution. Recently, through interpolation error analysis on the patch of interface elements, it was proved in \cite{2019GuoLinZhuang} that the errors decay optimally in both energy norm and $L^2$ norm requiring only the piecewise $H^2$ regularity of the solution. But to the best of our knowledge, due to the more challenging geometry, there is no theoretical analysis for the {\it a priori} error analysis for 3D interface problems.

This paper has two major contributions. The first one is to conduct the rigorous error analysis for the PPIFE method for 3D interface problems. The global degrees of freedom for the proposed IFE method are isomorphic to the standard continuous piecewise trilinear finite element space defined on the same mesh which is independent of the interface location and advantageous for moving interface problems. But due to the complexity of the geometrical configurations of interface elements and the corresponding IFE functions, the analysis can be very challenging. For example, fundamental inequalities such as the trace inequality and inverse inequality must be re-established for three-dimensional IFE functions. Nevertheless, the standard theoretical tools can barely be used due to the low regularity of the solution. In our analysis, we show the discrete extension operator used to construct IFE functions is stable regardless of interface location, and this stability severs as the foundation of the trace and inverse inequalities, which is also the key for the proposed PPIFE method to be stable for an arbitrary interface. Another challenge is the inconsistency of the numerical scheme due to the discontinuity of the trilinear IFE function across the interface surface. Thanks to the optimal error bound of the interpolation operator \cite{2020GuoLin}, we are able to show that the inconsistency term will not affect the overall accuracy, namely, there is no need to add penalties on the interface surface. The second contribution is the extensive investigation of the applicability of the proposed IFE method. In particular, we demonstrate that it can be used to solve problems with various interface shape and topology. Moreover, we also investigate the implementation of the method for some real-world interface models where only the original cloud-point geometric data on the interface are available. In a realistic simulation, these raw data need to be used to generate a computational interface surface which can be further utilized by the proposed IFE method.

The rest of the paper is organized as follows. In Section 2, we present the three-dimensional interface problem and recall some geometrical properties of the 3D cuboidal meshes with interface surfaces. In Section 3, we present the trilinear IFE spaces and the partially penalized IFE method for solving 3D interface problems. In Section 4, we prove fundamental inequalities including the trace and inverse inequalities of the trilinear IFE functions. In Section 5, we derive the \textit{a priori} error estimates of PPIFE solutions in both energy norm and $L^2$ norm. In Section 6, we present extensive numerical experiments not only to verify our theoretical results but to demonstrate how this IFE method can be applied to tackle the real-world 3D interface problems. A brief conclusion will be drawn in Section 7.

\section{Interface Models and Preliminary Results}
\label{sec:model}

Let $\Omega\subseteq\mathbb{R}^3$ be an open bounded domain. Without loss of generality, we assume that
$\Omega$ is separated into two subdomains $\Omega^-$ and $\Omega^+$ by a closed $C^2$ manifold $\Gamma\subseteq\Omega$ known as the interface. These subdomains contain different materials identified by a piecewise constant function $\beta(\mathbf{x})$ which is discontinuous across the interface $\Gamma$, i.e.,
\begin{equation*}
\beta(\mathbf{x})=
\left\{\begin{array}{cc}
\beta^- & \text{in} \; \Omega^- ,\\
\beta^+ & \text{in} \; \Omega^+,
\end{array}\right.
\end{equation*}
where $\beta^\pm>0$ and $\mathbf{x} = (x,y,z)$. We consider the following interface problem of the elliptic type on $\Omega$:
\begin{subequations}\label{model}
\begin{align}
\label{inter_PDE}
 -\nabla\cdot(\beta\nabla u)=f, &~~~~  \text{in} ~ \Omega^-  \cup \Omega^+, \\
 \jump{u}_{\Gamma} = 0, &~~~~  \text{on} ~ \Gamma \label{jump_cond_1}, \\
 \jump{\beta \nabla u\cdot \mathbf{n}}_{\Gamma}  = 0, & ~~~~\text{on}~\Gamma \label{jump_cond_2}, \\
  u=g, &~~~~\text{on} ~ \partial\Omega,
\end{align}
\end{subequations}
where $\jump{v}_\Gamma := (v|_{\Omega^+})_\Gamma-(v|_{\Omega^-})_\Gamma$, and $\mathbf{n}$ is the unit normal vector to $\Gamma$. For simplicity, we denote $u^s=u|_{\Omega^s}$, $s=\pm$, in the rest of this article. Here we only consider the homogenous jump condition, and the nonhomogeneous case can be treated by some enriched functions through the framework recently developed by Babu\v{s}ka et al. in \cite{2020AdjeridBabukaGuoLin}.

In this section, we first introduce some abstract spaces used throughout this article and recall some geometrical properties of the unfitted mesh for three-dimensional interface problems. Given an open subset $\tilde \Omega \subseteq \Omega$, let $H^{k}(\tilde \Omega)$ be the standard Hilbert spaces on $\tilde \Omega$ with the norm $\|\cdot\|_{k,\tilde \Omega}$ and the semi-norm $|\cdot|_{k,\tilde \Omega}$. In the case $\tilde \Omega^s := \tilde \Omega \cap \Omega^s \not = \emptyset, s = \pm$, we define the splitting Hilbert spaces
\begin{equation}
\label{split_Hspa}
PH^k(\tilde{\Omega}) = \{ u\in H^k(\tilde{\Omega}^{\pm}) ~:~  \jump{u}_{\Gamma\cap\tilde{\Omega}}=0 ~ \text{and} ~  \jump{\beta\nabla u\cdot\mathbf{ n}}_{\Gamma\cap\tilde{\Omega}}=0 \},
\end{equation}
where the definition implicitly implies the involved traces on $\Gamma\cap\tilde{\Omega}$ are well defined, with the associated norms and semi-norms defined as follows
\[
\|\cdot\|^2_{H^k(\tilde \Omega)}:=\|\cdot\|^2_{H^k(\tilde \Omega^+)}+\|\cdot\|^2_{H^k(\tilde \Omega^-)},~~~
|\cdot|^2_{H^k(\tilde \Omega)}:=|\cdot|^2_{H^k(\tilde \Omega^+)}+|\cdot|^2_{H^k(\tilde \Omega^-)}.
\]

In the following, we assume that $\Omega\subset\mathbb{R}^3$ is a cuboid domain, and $\mathcal{T}_h$ is a Cartesian cuboidal mesh of $\Omega$ where $h$ denotes the maximum length of the all cuboids. Denote $\mathcal{F}_h$, $\mathcal{E}_h$ and $\mathcal{N}_h$ as the collections of faces, edges, and nodes, respectively. We call an element $T\in \mathcal{T}_h$ an interface element if not all of its vertices locate on the same side of the interface $\Gamma$; otherwise, we treat it as a non-interface element. Similarly, we can define the interface faces and interface edges by the relative location of its vertices with the interface. Note that non-interface elements/faces/edges may still intersect with the interface due to large curvature of the segment of the interface, see the illustration in Figure \ref{fig:interface_elem_refine}. However, this issue can always be resolved by refining the mesh. 
Let $\mathcal{T}^i_h$/$\mathcal{F}^i_h$/$\mathcal{E}^i_h$ and $\mathcal{T}^n_h$/$\mathcal{F}^n_h$/$\mathcal{E}^n_h$ be the collections of interface and non-interface elements/faces/edges, respectively. Let $\kappa$ be the maximal curvature (principle curvature) of the interface surface $\Gamma$. Moreover, for each interface element $T\in \mathcal{T}^i_h$, we define its patch $\omega_T$ as
\begin{equation}
\label{patch}
\omega_T = \{ T'\in \mathcal{T}_h~:~ \overline{T'}\cap \overline{T} \neq \emptyset \}.
\end{equation}

Many unfitted-mesh methods rely on the assumption that the mesh size is sufficiently small such that the interface curve/surface is resolved enough \cite{2016GuoLin,2002HansboHansbo}. In this section, we provide a delicate approach to quantify how well the interface is resolved by a fixed mesh. In particular, our approach is to measure the flatness of the interface within each interface element in terms of the maximal angle between the normal vectors of the interface surface and its planar approximation. These fundamental geometric results will be used throughout this paper. First of all, we recall the so-called $r$-tubular neighborhood of a smooth manifold from \cite{1959Federer} which is actually a very useful concept in computational geometry \cite{2002MorvanThibert}.
\begin{lemma}[$r$-tubular neighborhood]
\label{tubular}
Given a smooth compact surface $\Gamma$ in $\mathbb{R}^3$, for each point $X\in\Gamma$, let $N_X(r)$ be a segment with the length $2r$ centered at $X$ and perpendicular to $\Gamma$. Then, there exists a positive $r>0$ such that $N_X(r)\cap N_Y(r) = \emptyset$
for any $X, Y\in \Gamma, X \not = Y$. Then the $r$-tubular neighborhood of $\Gamma$ is defined as the set $U_{\Gamma}(r)=\cup_{X\in\Gamma} N_X(r)$.
\end{lemma}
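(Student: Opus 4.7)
The plan is to prove this as a standard consequence of the inverse function theorem together with a compactness argument, following the classical approach of Federer.

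First, I would set up the normal map. Since $\Gamma$ is a closed orientable $C^2$ surface in $\mathbb{R}^3$ (orientability is free here because $\Gamma$ separates $\Omega$ into $\Omega^+$ and $\Omega^-$), there is a globally defined unit normal field $\mathbf{n}:\Gamma\to S^2$ of class $C^1$. Define the smooth map
\begin{equation*}
F:\Gamma\times\mathbb{R}\to\mathbb{R}^3,\qquad F(X,t)=X+t\,\mathbf{n}(X).
\end{equation*}
For any $X_0\in\Gamma$, the differential $dF_{(X_0,0)}$ sends a tangent vector $(v,s)\in T_{X_0}\Gamma\oplus\mathbb{R}$ to $v+s\,\mathbf{n}(X_0)$, and because $T_{X_0}\Gamma$ and $\mathbf{n}(X_0)$ together span $\mathbb{R}^3$, this map is a linear isomorphism. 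By the inverse function theorem, for every $X_0\in\Gamma$ there exist an open neighborhood $V_{X_0}\subseteq\Gamma$ of $X_0$ and a number $r_{X_0}>0$ such that $F$ is a $C^1$-diffeomorphism from $V_{X_0}\times(-r_{X_0},r_{X_0})$ onto an open subset of $\mathbb{R}^3$. In particular, $F$ is injective on that product set, which already gives local disjointness of normal segments whose foot-points lie in $V_{X_0}$.

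Next, I would use the compactness of $\Gamma$. Cover $\Gamma$ by finitely many such neighborhoods $V_{X_1},\ldots,V_{X_m}$ and let $r_0:=\min_{j}r_{X_j}>0$. Then for any two distinct points $X,Y$ of $\Gamma$ that happen to lie together inside the same $V_{X_j}$, the segments $N_X(r_0)$ and $N_Y(r_0)$ are disjoint by the local injectivity of $F$. What remains is the global case: $X$ and $Y$ sufficiently far apart on $\Gamma$ might still have their normal segments crossing.

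For this global step I would argue by contradiction. Suppose no $r>0$ works; then one can produce sequences $X_n\neq Y_n$ in $\Gamma$ and points
\begin{equation*}
P_n\in N_{X_n}(1/n)\cap N_{Y_n}(1/n),
\end{equation*}
so that $|P_n-X_n|\le 1/n$ and $|P_n-Y_n|\le 1/n$, which forces $|X_n-Y_n|\le 2/n\to 0$. By compactness of $\Gamma$, after extracting a subsequence, $X_n\to X_*$ and $Y_n\to X_*$. For $n$ large enough, both $X_n$ and $Y_n$ lie inside a local injectivity neighborhood $V_{X_*}$ with radius $r_{X_*}$, and $1/n<r_{X_*}$; but then $F(X_n,t_n)=F(Y_n,s_n)$ for some small $t_n,s_n$, contradicting injectivity of $F$ on $V_{X_*}\times(-r_{X_*},r_{X_*})$. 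This yields the desired uniform $r>0$.

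The main obstacle is precisely this global-from-local step: pointwise invertibility of $dF$ only gives injectivity in a neighborhood of the zero section, and a naive cover argument does not rule out normal segments from distant foot-points meeting. The sequential/compactness contradiction above is the cleanest way to overcome it, and it is exactly the mechanism underlying the fact that a compact $C^2$ submanifold of $\mathbb{R}^3$ has positive reach in the sense of Federer. The resulting $r$ is then the desired tubular neighborhood radius, and $U_\Gamma(r)=F(\Gamma\times(-r,r))$ is an open neighborhood of $\Gamma$ on which the nearest-point projection onto $\Gamma$ is single-valued and $C^1$, although only the disjointness of the normal segments is needed for the statement.
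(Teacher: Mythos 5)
The paper does not actually prove this lemma: it is recalled verbatim from Federer's 1959 paper (and Morvan--Thibert), with no argument supplied, so there is no in-paper proof to measure you against. Your blind proof is the standard and correct one for the positive-reach/tubular-neighborhood theorem: the normal exponential map $F(X,t)=X+t\,\mathbf{n}(X)$ has invertible differential along the zero section (using that $C^2$ regularity of $\Gamma$ makes $\mathbf{n}$ of class $C^1$, so the inverse function theorem applies), which gives local injectivity, and the sequential compactness contradiction correctly upgrades this to a uniform $r$ --- this last step is indeed the only nontrivial point, and you identify it as such. Two cosmetic remarks: the finite subcover paragraph is logically redundant, since the final contradiction only uses the single local injectivity neighborhood $V_{X_*}$ at the limit point; and if one is fussy about the closed segments $N_X(r_0)$ reaching the endpoints $\pm r_0$ of the open parameter interval, one should shrink to $r_0=\tfrac12\min_j r_{X_j}$, though your contradiction argument already sidesteps this because $1/n<r_{X_*}$ strictly for large $n$. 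Neither affects correctness; the argument stands as a self-contained justification of a statement the paper only cites.
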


Define $r_{\Gamma}$ to be the largest $r$ such that Lemma \ref{tubular} holds, namely it corresponds to the largest $r$-tabular neighborhood, and this positive number $r_{\Gamma}$ is referred as the reach of the surface $\Gamma$ \cite{2002MorvanThibert}. We further note that the reach $r_{\Gamma}$ is only determined by the surface itself. Throughout this paper, we assume that the mesh size $h$ is sufficiently small such that the following hypotheses hold \cite{2020GuoLin}:
\begin{itemize}
  \item[(\textbf{H1})] $h<r_{\Gamma}/(3\sqrt{3})$.   
  \item[(\textbf{H2})] $h \kappa \leq 0.0288.$
  \item[(\textbf{H3})] The interface $\Gamma$ cannot intersect an edge $e\in\mathcal{E}_h$ at more than one point.
  \item[(\textbf{H4})] The interface $\Gamma$ cannot intersect a face $f\in\mathcal{F}_h$ at more than two edges.
\end{itemize}
These hypotheses basically ensure that the interface surface is sufficiently resolved by the unfitted mesh such that it is flat enough inside each interface element. Similar assumptions have been used in many unfitted-mesh methods such as \cite{2015BurmanClaus,2015GuzmanSanchezSarkisP1,2016GuoLin,2002HansboHansbo,2005KafafyLinLinWang}. In this work we make the bounds in (\textbf{H1}) and (\textbf{H2}) explicit and computable which can guide mesh generation in real computation. We refer readers to \cite{2020GuoLin} for the details of calculation of those bounds.

Now we are ready to describe the classification of the interface elements. Based on hypotheses (\textbf{H3}) and (\textbf{H4}), we claim that the interface surface can only intersect an element at six points the most. In fact, suppose that an element has 7 intersection points. According to (\textbf{H3}), these 7 points must be on 7 different edges. Since every edge is shared by two adjacent faces in the element, there are a total of 14 interface edges counting each edge twice from its sharing faces. A cuboid has six faces in total, which means there is at least one face containing at least 3 interface edges. This is contradicted to (\textbf{H4}). 

According to \cite{2020GuoLin}, when the interface is resolved sufficiently by an unfitted mesh there are only five possible interface element configurations as shown in Figure \ref{fig:subfig_interf}. Taking into account of rotation, the five types of interface cuboids have the following representatives: 
\begin{description}
\item [Type I interface element:] three intersection points on three edges
\item [Type II interface element:] four intersection points on four parallel edges
\item [Type III interface element:] four intersection points on two pairs of adjacent edges
\item [Type IV interface element:] five intersection points on five edges
\item [Type V interface element:] six intersection points on six edges
\end{description}
See Figure \ref{fig:subfig_interf} (\subref{inter_elem_case1}-\subref{inter_elem_case5}) for illustrations of all types of interface elements. 
This classification strategy can also be used in computation to efficiently determine the geometry configuration of each interface element by counting the number of interface points and the number of vertices on each side. The classification of interface elements is important in constructing IFE functions and numerical quadrature which we shall discuss later on.

\begin{figure}[!]
\centering
     \includegraphics[width=1.8in]{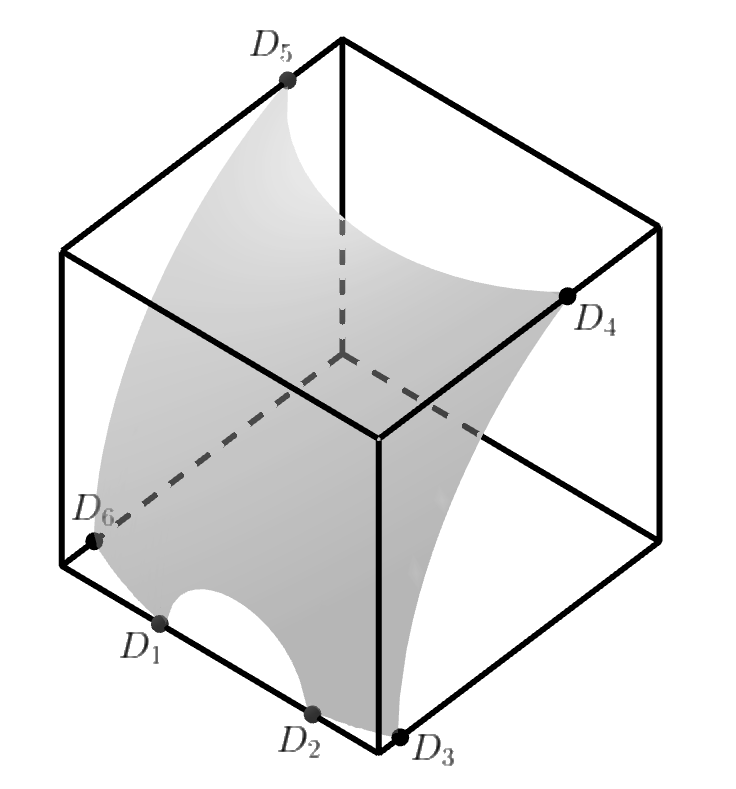}~~~
     \includegraphics[width=1.7in]{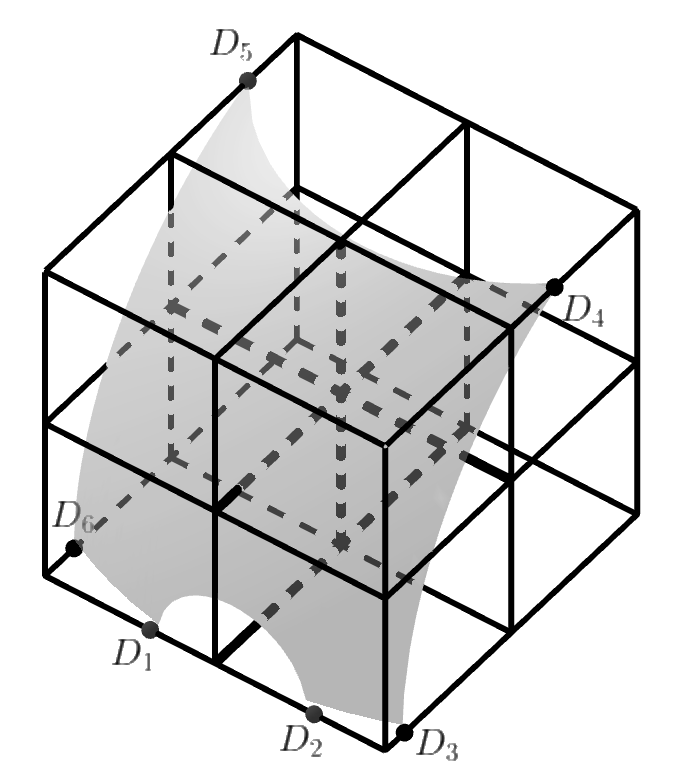}
\caption{Refine an interface element into eight congruent elements. Left: an interface element with an edge containing two intersection points (left). Right: a further partition such that this element satisfies the hypothesis (\textbf{H3}).} 
  \label{fig:interface_elem_refine} 
\end{figure}


\begin{figure}[h]
\centering
\begin{subfigure}{.3\textwidth}
     \includegraphics[width=1.5in]{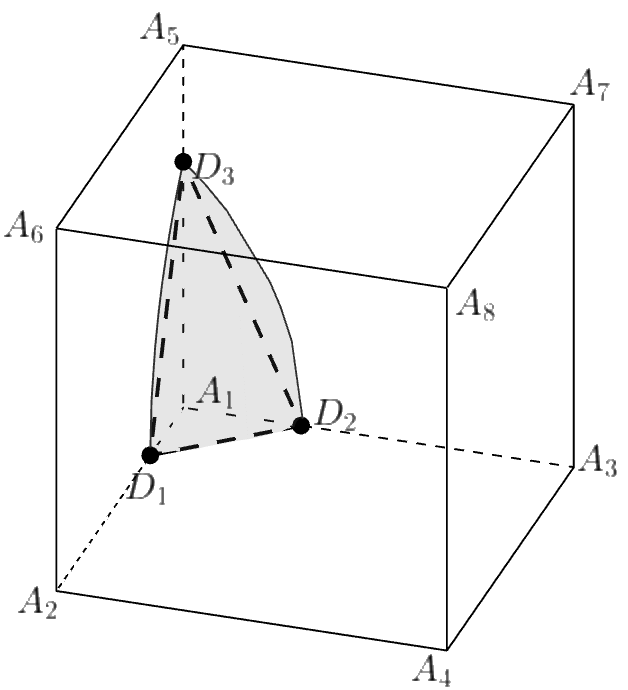}
     \caption{Type I: 3 intersection points}
     \label{inter_elem_case1} 
\end{subfigure}
~
\begin{subfigure}{.3\textwidth}
     \includegraphics[width=1.5in]{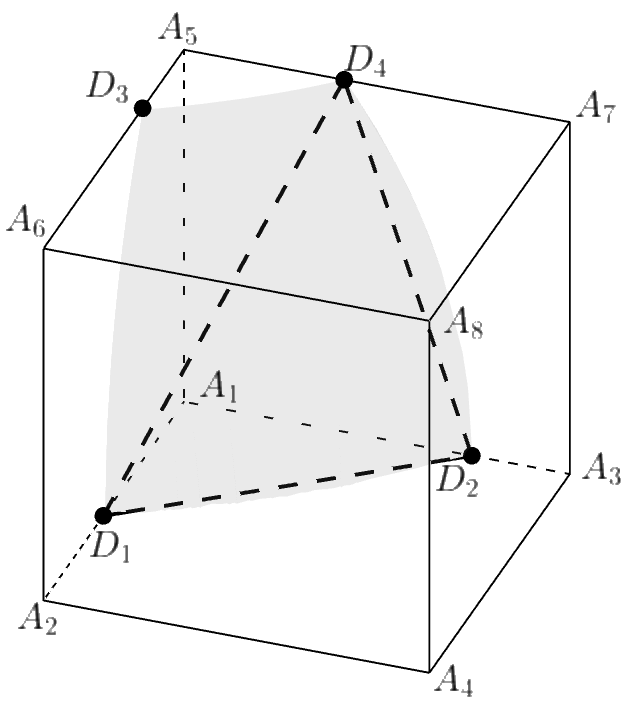}
     \caption{Type II: 4 intersection points on the adjacent edges}
     \label{inter_elem_case2} 
\end{subfigure}
~
 \begin{subfigure}{.3\textwidth}
     \includegraphics[width=1.5in]{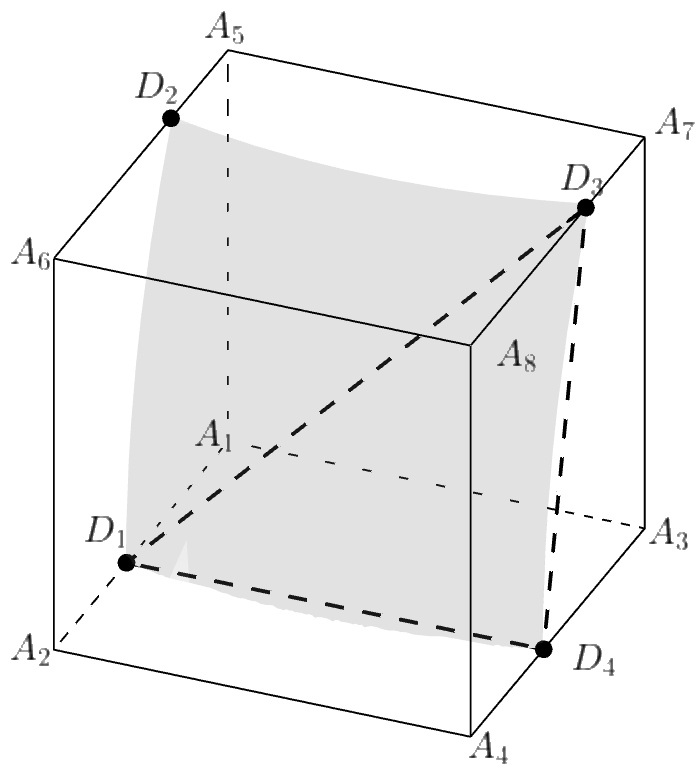}
     \caption{Type III: 4 intersection points on the opposite edges}
     \label{inter_elem_case3} 
\end{subfigure}
 \begin{subfigure}{.3\textwidth}
     \includegraphics[width=1.5in]{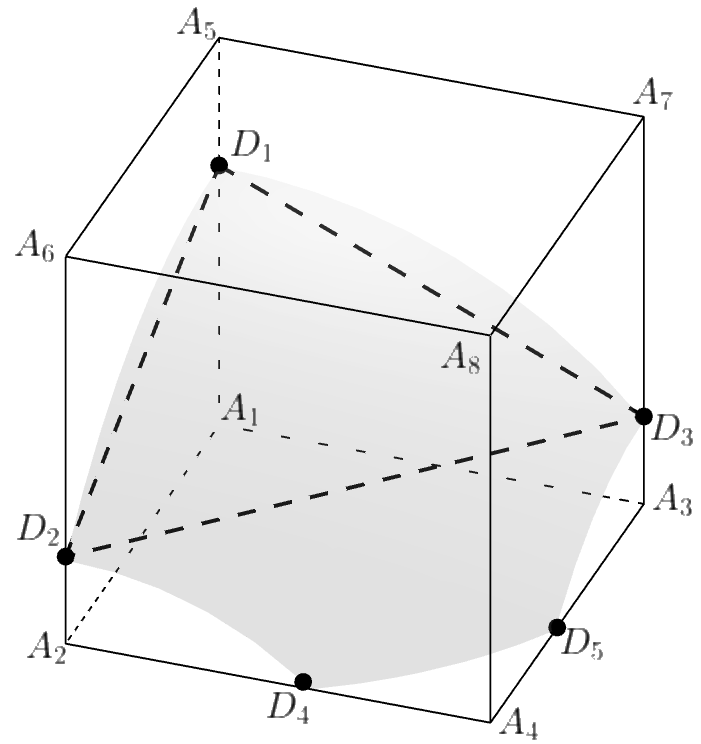}
     \caption{Type IV: 5 intersection points}
      \label{inter_elem_case4} 
\end{subfigure}
~~
\begin{subfigure}{.3\textwidth}
     \includegraphics[width=1.5in]{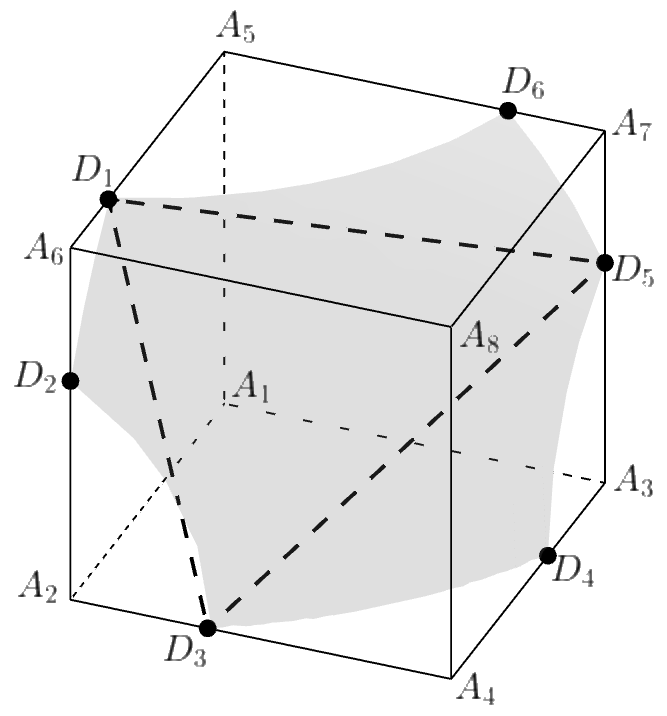}
     \caption{Type V: 6 intersection points}
      \label{inter_elem_case5} 
\end{subfigure}
     \caption{Possible Interface Element Configuration }
  \label{fig:subfig_interf} 
\end{figure}

Now we describe how to construct an approximate plane, denoted by $\tau_T$, for each interface element $T\in\mathcal{T}_h^i$ with sufficient geometric representation for the interface. This is done by constructing a triangle formed by three suitable intersection points such that its maximal angle is always bounded by $135^{\circ}$ regardless of the interface location \cite{2020GuoLin}. As shown by Figure \ref{fig:subfig_interf}, we follow the choice in \cite{2020GuoLin} to make this plane $\tau_T$ contain the following triangles: $\triangle D_1D_2D_3$ for Type I, $\triangle D_1D_2D_4$ for Type II, $\triangle D_1D_4D_3$ for Type III, $\triangle D_1D_2D_3$ for Type IV and $\triangle D_1D_3D_5$ for Type V. We emphasize that the choice may not be unique, and any triangle is acceptable as long as the maximal angle condition is satisfied. We refer readers to \cite{2017ChenWeiWen,2020GuoLin} for more details on the calculation of the maximal angles of these triangles. Note that the maximal angle condition is also widely used in standard finite element analysis which can be traced back to the early works of Babu\v{s}ka \cite{1974BabuskaAziz}. Under this choice of the plane $\tau_T$, we recall from \cite{2020GuoLin} the following optimal geometric error estimate.

\begin{lemma}
\label{lem_interf_element_est}
Let $\mathcal{T}_h$ be  a Cartesian mesh whose mesh size is small enough such that (\textbf{H1})-(\textbf{H4}) hold, then the following estimates hold for every point $X\in\Gamma\cap T$ (or every point $X\in\Gamma\cap \omega_T$):
\begin{subequations}
\label{interf_element_est_eq0}
\begin{align}
    & \| X - X_{\bot} \| \le 12.0927 \kappa h^2, \label{interf_element_est_eq0_dist} \\
    &  \mathbf{ n}(X) \cdot \bar{\mathbf{ n}} \ge 1 - 26.6121 \kappa^2 h^2,  \label{interf_element_est_prod}
\end{align}
\end{subequations}
where $X_{\bot}$ is the projection of $X$ onto $\tau_T$, $\mathbf{ n}(X)$ is the unit normal vector to $\Gamma$ at $X$, and $\bar{\mathbf{ n}}$ is the normal vector to $\tau_T$.
\end{lemma}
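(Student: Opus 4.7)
The plan is to parametrize a neighborhood of $\Gamma\cap T$ as a graph over a well-chosen tangent plane to $\Gamma$ and then to estimate separately (i) the tilt of the three-point plane $\tau_T$ relative to that tangent plane and (ii) the deviation of an arbitrary point $X\in\Gamma\cap T$ from the tangent plane, both of which are controlled by $\kappa$ via the $C^2$ regularity of $\Gamma$. Concretely, fix a reference point $Y\in\Gamma\cap T$ (for instance the lift of the centroid of $\triangle D_1D_2D_3$ onto $\Gamma$); hypothesis (\textbf{H1}) places $T$ safely inside the $r_\Gamma$-tubular neighborhood, so the closest-point projection onto $\Gamma$ is smooth near $Y$. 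Since the principal curvatures of $\Gamma$ are bounded by $\kappa$, the second fundamental form at $Y$ gives, for every $P\in\Gamma\cap T$,
\[
P=Y+\mathbf{t}_P+\phi_P\,\mathbf{n}(Y),\qquad \mathbf{t}_P\perp\mathbf{n}(Y),\qquad |\phi_P|\le\tfrac{\kappa}{2}\|\mathbf{t}_P\|^2,
\]
and since the diameter of every cuboid in $\mathcal T_h$ is at most $\sqrt 3\,h$ we have $\|\mathbf{t}_P\|\le\sqrt 3\,h$ and hence $|\phi_P|\le\tfrac{3\kappa}{2}h^2$ for $P\in\{D_1,D_2,D_3,X\}$.

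For the normal estimate \eqref{interf_element_est_prod}, I would write
\[
\bar{\mathbf{n}}=\frac{(D_2-D_1)\times(D_3-D_1)}{\|(D_2-D_1)\times(D_3-D_1)\|}
\]
and substitute the graph representation: the purely tangential part of the numerator equals $2\,\mathrm{Area}(\triangle D_1D_2D_3)\,\mathbf{n}(Y)$, while each cross term involving a $\phi_{D_i}$ has norm bounded by $\sqrt 3\,h\cdot\tfrac{3\kappa}{2}h^2$. The $135^\circ$ maximal-angle bound on $\triangle D_1D_2D_3$ forces an explicit lower bound on $\mathrm{Area}(\triangle D_1D_2D_3)$ in terms of $h^2$, so after normalisation one obtains $\bar{\mathbf{n}}\cdot\mathbf{n}(Y)\ge 1-C_1\kappa^2 h^2$ with an explicit $C_1$. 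Combining this with the Lipschitz bound $\|\mathbf{n}(X)-\mathbf{n}(Y)\|\le\kappa\|X-Y\|\le\sqrt 3\,\kappa h$ (which follows from the curvature bound on $\Gamma$) and the identity $\|a-b\|^2=2(1-a\cdot b)$ for unit vectors yields \eqref{interf_element_est_prod} with the stated constant $26.6121$.

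For the distance estimate \eqref{interf_element_est_eq0_dist}, note that $\|X-X_\bot\|=|\bar{\mathbf{n}}\cdot(X-D_1)|$. Splitting $X-D_1=(\mathbf{t}_X-\mathbf{t}_{D_1})+(\phi_X-\phi_{D_1})\mathbf{n}(Y)$, the normal piece contributes at most $|\phi_X|+|\phi_{D_1}|\le 3\kappa h^2$, while the tangential piece has magnitude at most $\|\bar{\mathbf{n}}-\mathbf{n}(Y)\|\cdot\sqrt 3\,h=O(\kappa h^2)$ by the previous step. Summing and carefully tracking all prefactors gives \eqref{interf_element_est_eq0_dist}. The same argument applies verbatim with $T$ replaced by $\omega_T$, at the cost of replacing $\sqrt 3\,h$ by a slightly larger multiple of $h$, which is absorbed into the constants.

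The main obstacle is not the order $O(\kappa h^2)$, which is the standard Federer-type tubular-neighborhood bound, but the extraction of the sharp constants $12.0927$ and $26.6121$. These require propagating the worst-case element diameter $\sqrt 3\,h$, the $135^\circ$ maximal-angle bound (and the consequent explicit lower bound on $\mathrm{Area}(\triangle D_1D_2D_3)$), and the hypotheses (\textbf{H1})-(\textbf{H2}) through every inequality rather than absorbing them into a generic constant. This careful bookkeeping is exactly what is carried out in \cite{2020GuoLin}, whose calculation we would invoke directly.
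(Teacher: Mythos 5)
You should first note that the paper does not actually prove this lemma: its ``proof'' is a one-line citation to \cite{2020GuoLin}, so your proposal, which sketches the argument and then defers to that reference for the sharp constants, is at least as complete as what the paper offers, and your overall skeleton (graph representation over a tangent plane with $|\phi_P|\le\tfrac{\kappa}{2}\|\mathbf{t}_P\|^2$, separate control of the tilt of $\tau_T$ and of the deviation of $X$, Lipschitz continuity of $\mathbf{n}$) is the standard and correct one for estimates of this type.

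There is, however, one step that is wrong as stated: the claim that the $135^{\circ}$ maximal-angle bound ``forces an explicit lower bound on $\mathrm{Area}(\triangle D_1D_2D_3)$ in terms of $h^2$.'' It does not. The intersection points may cluster near a vertex of the cuboid when $\Gamma$ clips a corner, so the triangle can be arbitrarily small relative to $h$; and even at fixed diameter $d$, a triangle with angles $135^{\circ}$, $44.99^{\circ}$, $0.01^{\circ}$ satisfies the maximal-angle condition while having area of order $10^{-4}d^{2}$. Consequently, dividing cross-product error terms of size $O(\kappa h^{3})$ by a supposed area bounded below by $ch^{2}$ is not legitimate, and with a degenerate triangle your normalization step breaks down. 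The repair is to base the graph expansion at the vertex of the \emph{largest} angle $\theta$ of the chosen triangle, say $D_1$, so that the numerator's error terms are bounded by $\tfrac{\kappa}{2}\|\mathbf{t}_2\|\,\|\mathbf{t}_3\|\,(\|\mathbf{t}_2\|+\|\mathbf{t}_3\|)$ while the area equals $\tfrac12\|\mathbf{t}_2\|\,\|\mathbf{t}_3\|\sin\theta$; the ratio is then at most $\kappa(\|\mathbf{t}_2\|+\|\mathbf{t}_3\|)/\sin\theta$, and since the largest angle of any triangle lies in $[60^{\circ},135^{\circ}]$ the maximal-angle condition gives $\sin\theta\ge\sin 135^{\circ}=\tfrac{\sqrt2}{2}$, uniformly in the size and thinness of the triangle. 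This is where the $135^{\circ}$ condition actually enters (equivalently, it is the Babu\v{s}ka--Aziz mechanism: the gradient of the linear interpolant of the graph function at $D_1,D_2,D_3$ deviates from the true gradient by $O(\kappa h)$ under a maximal-angle condition alone), not through an area lower bound. With that correction your distance estimate via $|\bar{\mathbf{n}}\cdot(X-D_1)|$ and the passage from $T$ to $\omega_T$ go through, and the extraction of the constants $12.0927$ and $26.6121$ is indeed the bookkeeping carried out in \cite{2020GuoLin}.
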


\begin{proof}
See the proof of Theorem \ref{lem_interf_element_est} of \cite{2020GuoLin}.
\end{proof}

A direct consequence of Lemma \ref{lem_interf_element_est} is the following lemma.
\begin{lemma}
\label{lem_interf_area}
Under the conditions of Theorem \ref{lem_interf_element_est}, there exists a constant $C$ independent of interface location and mesh size $h$ such that $meas({\Gamma\cap T}) \le Ch^2$.
\end{lemma}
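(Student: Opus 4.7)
The plan is to exploit the near-flatness estimate \eqref{interf_element_est_prod} from Lemma \ref{lem_interf_element_est} to reduce the area of the curved patch $\Gamma \cap T$ to the area of a planar region sitting inside the approximating plane $\tau_T$, which is trivially $O(h^2)$ since $T$ has diameter at most $h\sqrt{3}$.

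First I would fix a mesh satisfying (\textbf{H1})--(\textbf{H4}) and work with the approximating plane $\tau_T$ together with its normal $\bar{\mathbf{n}}$. For the parameter range guaranteed by (\textbf{H2}), the bound $\mathbf{n}(X)\cdot\bar{\mathbf{n}}\ge 1-26.6121\,\kappa^2 h^2$ is strictly positive and bounded below by, say, $1/2$. This means that at every point of $\Gamma\cap T$ the tangent plane to $\Gamma$ makes an angle at most $\arccos(1/2)$ with $\tau_T$, so the orthogonal projection $\pi:\Gamma\cap T\to\tau_T$ onto $\tau_T$ is a local diffeomorphism whose Jacobian equals $\cos\theta(X) = \mathbf{n}(X)\cdot\bar{\mathbf{n}}$. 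Global injectivity of $\pi$ follows from the tubular-neighborhood property (Lemma \ref{tubular}): by (\textbf{H1}) the diameter of $T$ is small compared to the reach $r_\Gamma$, so $\Gamma\cap T$ lies in $U_\Gamma(r_\Gamma)$ and can be written as a single graph over $\pi(\Gamma\cap T)\subset\tau_T$ in the $\bar{\mathbf{n}}$-direction.

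Next I would invoke the area formula for a graph over $\tau_T$:
\begin{equation*}
  meas(\Gamma\cap T) \;=\; \int_{\pi(\Gamma\cap T)} \frac{1}{\mathbf{n}(X)\cdot\bar{\mathbf{n}}}\, d\sigma_{\tau_T}
  \;\le\; 2\,meas\bigl(\pi(\Gamma\cap T)\bigr),
\end{equation*}
using the uniform lower bound $\mathbf{n}(X)\cdot\bar{\mathbf{n}}\ge 1/2$. It remains to bound the planar area $meas(\pi(\Gamma\cap T))$. By \eqref{interf_element_est_dist} each point $X\in\Gamma\cap T$ projects a distance at most $12.0927\,\kappa h^2$ onto $\tau_T$, and $X$ already lies in $T$ whose diameter is $\le h\sqrt{3}$, so $\pi(\Gamma\cap T)$ is contained in the intersection of $\tau_T$ with an $O(\kappa h^2)$-neighborhood of $T$. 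That intersection is a planar region of diameter $O(h)$ (for $h$ small enough, by (\textbf{H2})), hence of area $\le Ch^2$ for a constant $C$ that depends only on the universal constants in (\textbf{H1})--(\textbf{H2}) and not on where $\Gamma$ cuts $T$.

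The main obstacle is the injectivity/graph representation of $\Gamma\cap T$ over $\tau_T$, since without it the area formula above would require an additional multiplicity factor. Once one knows that $\Gamma\cap T$ is a single graph over a planar region in $\tau_T$, the rest is a routine consequence of the two estimates already packaged in Lemma \ref{lem_interf_element_est}. The graph property does use some care: a priori the interface could re-enter $T$ in a way that its projection onto $\tau_T$ overlaps itself, but the combination of (\textbf{H3})--(\textbf{H4}) (which limits how often $\Gamma$ can cross edges and faces of $T$) and the strict positivity $\mathbf{n}\cdot\bar{\mathbf{n}}>0$ (which forbids $\Gamma$ from folding back in the $\bar{\mathbf{n}}$-direction inside $T$) rules this out.
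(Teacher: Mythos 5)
Your proposal is correct and follows essentially the same route as the paper: both arguments convert the area of $\Gamma\cap T$ into an integral over the approximating plane $\tau_T$ with Jacobian factor $1/(\mathbf{n}(X)\cdot\bar{\mathbf{n}})$, bound that factor by a constant using \eqref{interf_element_est_prod}, and then use $meas(\tau_T)\le Ch^2$. Your additional care about the injectivity of the projection (via the reach and (\textbf{H1})) addresses a point the paper's one-line change of variables leaves implicit, but it is a refinement of the same argument rather than a different one.
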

\begin{proof}
Clearly, we have $meas(\tau_T)\le Ch^2$. Then using \eqref{interf_element_est_prod} we have
\begin{equation*}
meas({\Gamma\cap T}) = \verti{\int_{\Gamma\cap T} dS} = \verti{\int_{\tau_T} \frac{1}{\mathbf{ n}(X)\cdot\bar{\mathbf{ n}}} dS} \le Ch^2.
\end{equation*}
Finally for simplicity's sake, we shall employ a generic constant $C$ in the rest of this article which is independent of interface location, mesh size and discontinuous coefficients $\beta^{\pm}$ without explicitly mentioning in presentation. In addition the notation $\simeq$ denotes equivalence where the hidden constant $C$ has the same property.

\end{proof}


\section{Trilinear IFE Spaces and the IFE Method}
In this section we describe the trilinear IFE functions and the PPIFE method. In general IFE functions constructed by piecewise polynomials cannot satisfy the jump conditions exactly for an arbitrary interface surface. Different approximations of jump conditions have been proposed in the 2D case, see \cite{2016GuoLin,2019GuoLin,2008HeLinLin}. Most of the methods rely on the linear approximation of the interface curve constructed by simply connecting the intersection points, and then the approximate jump conditions are posed on this line. However this approach becomes obscure in 3D case since the intersection points, the number varying from three to six, may not be coplanar. Some early works of IFE functions use the approximation plane passing through the three points which has the shortest distance to the others for which we refer readers to \cite{2016HanWangHeLinWang,2005KafafyLinLinWang} for details. Besides, a level-set approximate approach was used in \cite{2010VallaghePapadopoulo}. But to our best knowledge, these works on 3D IFE functions are in lack of theoretical foundation. Recently, the authors in \cite{2020GuoLin} proposed a new and provable construction approach by using a special approximate plane satisfying the maximal angle condition described above. 

Let $\mathbb{Q}_1 = \text{Span}\{1,x,y,z,xy,xz,yz,xyz\}$ be the trilinear polynomial space and let $F$ be the centroid of the triangle described above and shown in Figure \ref{fig:subfig_interf}. According to \cite{2020GuoLin}, the local trilinear IFE space $S_h(T)$ is formed by piecewise polynomials $\phi_T$ with $\phi^{\pm}_T = \phi_T|_{T^{\pm}} \in \mathbb{Q}_1$ which satisfy the approximate jump conditions to \eqref{jump_cond_1} and \eqref{jump_cond_2}:
\begin{subequations}
\label{ife_shape_fun_2}
\begin{align}
\phi^{-}_T|_{\tau_T}=\phi^{+}_T|_{\tau_T}, ~~ \bfd(\phi^{-}_T) = \bfd(\phi^{+}_T), \label{ife_shape_fun_2_1} \\
\beta^-\nabla\phi^{-}_T(F)\cdot \bar{\mathbf{ n}} = \beta^+\nabla\phi^{+}_T(F)\cdot \bar{\mathbf{ n}},  \label{ife_shape_fun_2_2}
\end{align}
\end{subequations}
where $\bfd(p)$ is the vector of coefficients of terms $xy$, $yz$, $xz$ and $xyz$ in a polynomial $p\in \mathbb{Q}_1$. Then, on each interface element, we recall the extension operator $\mathcal{C}_T$ from (3.4) in \cite{2020GuoLin}
\begin{equation}
\begin{split}
\label{ext_opera}
\mathcal{C}_T:\mathbb{Q}_1\rightarrow\mathbb{Q}_1, ~~ & \text{such that} ~  \phi^-_T = p \in \mathbb{Q}_1  \text{~and~} \phi^+_T= \mathcal{C}_T(p) \in \mathbb{Q}_1\\
 &\text{together satisfy the approximate jump conditions \eqref{ife_shape_fun_2}}. 
\end{split}
\end{equation}
Let $L(X)=(X-F)\cdot\bar{\bfn}$ be the level-set function of the plane $\tau_T$. In particular, we have the following explicit expression for the operator:
\begin{subequations}
\label{C_rela}
\begin{align}
& \mathcal{C}_T(p) = p +  \left( \frac{\beta^-}{\beta^+}-1 \right) (\nabla p(F) \cdot \bar{\mathbf{ n}} ) L,\label{C_rela_1} \\
& \mathcal{C}^{-1}_T(p) = p +  \left( \frac{\beta^+}{\beta^-}-1 \right) (\nabla p(F) \cdot \bar{\mathbf{ n}} ) L.\label{C_rela_2}
\end{align}
\end{subequations}
Then the proposed IFE space $S_h(T)$ can be written as
\begin{equation}
\label{IFE_space_C}
S_h(T) = \{ \phi_T|_{T^\pm}\in\mathbb{Q}_1 : \phi_T|_{T^-}=p \in \mathbb{Q}_1 ~ \text{and} ~  \phi_T|_{T^+}= \mathcal{C}_T(p)  \},~~\forall T \in \mathcal{T}_h^i.
\end{equation}
It is crucial in both analysis and computation which shape functions are used, namely which degrees of freedom are chosen. Different shape functions may have different features in computation. In this article, we consider the Lagrange IFE shape function $\phi_{i,T}$ such that
\begin{equation}
\label{IFE_shape_fun_delta}
\phi_{i,T}(A_j) = \delta_{i,j}, ~~~ i,j=1,...,8,
\end{equation}
where $A_j$ are the vertices of the interface element $T$ as shown in Figure \ref{fig:subfig_interf}. Then the IFE space can be rewritten as
\begin{equation}
\label{loc_IFE_space_span}
S_h(T) = \text{Span} \{ \phi_{i,T}~:~ i=1,...,8 \}.
\end{equation}
The global IFE space is defined as
\begin{equation}
\label{glob_IFE_space}
S_h(\Omega) = \{ v\in L^2(\Omega): v|_T\in S_h(T)~ \forall T\in \mathcal{T}_h, ~ v~\text{is continuous at}~ X\in \mathcal{N}_h     \}.
\end{equation}
It has been shown in \cite{2020GuoLin} that these IFE spaces have optimal approximation capabilities to the functions satisfying the jump conditions in the $L^2$ and $H^1$ norms. We shall discuss some new approximation capabilities in Section \ref{sec:error}. Let $S_{h}^0(\Omega)$ be the subspace of $S_h(\Omega)$ with zero trace on $\partial\Omega$. Clearly, $S_{h}^0(\Omega)$ is a subspace of the underling space
\begin{equation}
\begin{split}
\label{vh_space}
V_h(\Omega) = \{ v\in L^2(\Omega)~:~ & v|_T\in H^1(T)~ \forall T\in \mathcal{T}^n_h, ~ \text{and} ~ v|_{T^\pm}\in H^1(T^\pm)~ \forall T\in \mathcal{T}^i_h, \\
& ~ v~\text{is continuous at each}~ X\in \mathcal{N}_h, ~ v|_{\partial\Omega}=0    \}.
 \end{split}
\end{equation}
Now the proposed PPIFE method is: find $u_h\in S_h(\Omega)$ such that $u_h(X)=g(X)$ $\forall X\in \mathcal{N}_h\cap\partial\Omega$ and
\begin{equation}
\label{ppife}
a_h(u_h,v_h) = L(v_h), ~~~ \forall v_h\in S_h^0(\Omega),
\end{equation}
where the bilinear form $a_h(\cdot,\cdot)$ is given by
\begin{equation}
\begin{split}
\label{ppife_1}
a_h(u,v) =& \sum_{T\in\mathcal{T}_h} \int_T \beta \nabla u \cdot \nabla v dX 
- \sum_{F\in\mathcal{F}^i_h} \int_F \aver{\beta \nabla u\cdot \mathbf{ n}} \jump{v} ds \\
  &   + \epsilon \sum_{F\in\mathcal{F}^i_h} \int_F \aver{\beta \nabla v\cdot \mathbf{ n}} \jump{u} ds + \sum_{F\in\mathcal{F}^i_h} \frac{\sigma }{h} \int_F \jump{u}\,\jump{v} ds,
\end{split}
\end{equation}
with $\sigma=\tilde{\sigma}^0(\beta^+)^2/\beta^-$ with $\tilde{\sigma}^0$ large enough but independent of $h$ and $\beta^{\pm}$, and the linear form $L: S_h(\Omega)\rightarrow \mathbb{R}$ is 
\begin{equation}
\label{ppife_2}
L(v) = \int_{\Omega} f v dX. 
\end{equation}
Here we emphasize that \eqref{ppife} is not a discontinuous Galerkin scheme since the global IFE functions in \eqref{glob_IFE_space} are all continuous at the mesh nodes such that the global degrees of freedom are isomorphic to the standard continuous piecewise trilinear finite element space. The penalties in \eqref{ppife_1} are only added on the interface faces of which the purpose is to handle the discontinuities of IFE functions across the element boundaries. This isomorphism makes the IFE method advantageous when solving moving interface problems \cite{2013LinLinZhang1}.

\section{Trace and Inverse Inequalities}
In this section, we proceed to establish the trace and inverse inequalities of the trilinear IFE functions. We note that these inequalities are non-trivial since IFE functions as piecewise polynomials do not have sufficient regularity for classical results to be applied. We need to consider all  interface element configurations in Figure \ref{fig:subfig_interf} separately. However, we note that their analysis is mathematically similar to each other; thus without loss of generality we only consider the Type III interface element as shown in Figure \ref{fig:subfig_interf}(\subref{inter_elem_case3}) since it is a good representative of our arguments.

We begin with a norm equivalence for polynomials on interface elements. For each interface element $T$, we denote the subelements cut by the interface $\Gamma$ by $T_1$ and $T_2$ where $T_1$ contains the vertex $A_1$ and $T_2$ contains the vertex $A_8$. Similarly, we have the subelements $\widetilde{T}_1$ and $\widetilde{T}_2$ cut by the approximating plane $\tau_T$. Then we have the following results.

\begin{lemma}
\label{norm_equiv}
On an interface element $T$, the following norm equivalence holds
\begin{equation}
\label{norm_equiv_01}
\|\cdot \|_{L^2(T_1)} \simeq \| \cdot \|_{L^2(\widetilde{T}_1)} \simeq \|\cdot\|_{L^2(T)}, ~~~~ \text{on} ~ \mathbb{Q}_1.
\end{equation} 
for the interface element types:
\begin{itemize}
\item Type III in Figure \ref{fig:subfig_interf}, if $|A_4D_4|\le\frac{1}{2}|A_4A_3|$ or $|A_2D_1|\le\frac{1}{2}|A_2A_1|$ or $|A_6D_2|\le\frac{1}{2}|A_6A_5|$ or $|A_8D_3|\le\frac{1}{2}|A_8A_7|$.
\item Type V in Figure \ref{fig:subfig_interf}.
\end{itemize}
In addition, the following norm equivalence holds
\begin{equation}
\label{norm_equiv_02}
\|\cdot \|_{L^2(T_2)} \simeq \| \cdot \|_{L^2(\widetilde{T}_2)} \simeq \|\cdot\|_{L^2(T)}, ~~~~ \text{on} ~ \mathbb{Q}_1.
\end{equation}
for the interface element types:
\begin{itemize}
\item Type I and Type II in Figure \ref{fig:subfig_interf}.
\item Type III in Figure \ref{fig:subfig_interf} if $|A_4D_4|\ge\frac{1}{2}|A_4A_3|$ or $|A_2D_1|\ge\frac{1}{2}|A_2A_1|$ or $|A_6D_2|\ge\frac{1}{2}|A_6A_5|$ or $|A_8D_3|\ge\frac{1}{2}|A_8A_7|$;
\item  Type IV and Type V in Figure \ref{fig:subfig_interf}.
\end{itemize}
\end{lemma}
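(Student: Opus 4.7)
The plan is to split the two-sided equivalence $\|\cdot\|_{L^2(T_1)} \simeq \|\cdot\|_{L^2(\widetilde{T}_1)} \simeq \|\cdot\|_{L^2(T)}$ into two conceptually distinct pieces. The inner equivalence $\|\cdot\|_{L^2(\widetilde{T}_1)} \simeq \|\cdot\|_{L^2(T)}$ is a purely finite-dimensional polynomial statement on $\mathbb{Q}_1$ about the cut of $T$ by the planar approximation $\tau_T$, while the outer equivalence $\|\cdot\|_{L^2(T_1)} \simeq \|\cdot\|_{L^2(\widetilde{T}_1)}$ is a geometric perturbation that exploits Lemma \ref{lem_interf_element_est} to bound $T_1 \triangle \widetilde{T}_1$. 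Following the paper's convention, I would treat Type III under the condition $|A_4 D_4| \le \tfrac{1}{2}|A_4 A_3|$ in detail; the remaining types and conditions (and the analogous $\widetilde{T}_2$ case) are handled by identical reasoning with the roles of the vertices/edges rotated.

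For the polynomial step, I would pull back by the affine map from $T$ to a reference cube $\hat{T}=[0,1]^3$, which is uniformly well-conditioned since $\mathcal{T}_h$ is Cartesian. I claim that under the hypothesized condition the image $\widehat{\widetilde{T}_1}$ contains a fixed axis-aligned sub-cuboid $K\subset\hat{T}$ whose side lengths are bounded below by an absolute constant independent of the interface location: the inequality $|A_4D_4|\le\tfrac{1}{2}|A_4A_3|$ (and its counterparts listed for the other cut edges) forces the planar triangle $\triangle D_1 D_4 D_3$ to stay uniformly away from the corner $A_1$, so a small axis-aligned cube anchored at that corner can be inscribed in $\widetilde{T}_1$. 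Since all norms on the eight-dimensional space $\mathbb{Q}_1$ are equivalent and $K$ is fixed in shape and size, there is an absolute constant $C$ with
\[
\|\hat p\|_{L^2(\hat T)}\le C\,\|\hat p\|_{L^2(K)}\le C\,\|\hat p\|_{L^2(\widehat{\widetilde{T}_1})},\qquad\forall\,\hat p\in\mathbb{Q}_1,
\]
and Jacobian scaling transfers the equivalence back to $T$.

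For the geometric step, I would write
\[
\bigl|\,\|p\|_{L^2(\widetilde{T}_1)}^2-\|p\|_{L^2(T_1)}^2\,\bigr|\le \|p\|_{L^\infty(T)}^2\,|T_1\triangle \widetilde{T}_1|.
\]
By \eqref{interf_element_est_eq0_dist} the symmetric difference is contained in a $(12.0927\,\kappa h^2)$-tubular neighborhood of $\tau_T\cap T$, and combining with Lemma \ref{lem_interf_area} gives $|T_1\triangle \widetilde{T}_1|\le C\kappa h^4$. The classical polynomial inverse inequality on the cuboid $T$ yields $\|p\|_{L^\infty(T)}\le C h^{-3/2}\|p\|_{L^2(T)}$, so invoking the already-established polynomial step one obtains
\[
\bigl|\,\|p\|_{L^2(\widetilde{T}_1)}^2-\|p\|_{L^2(T_1)}^2\,\bigr|\le C\kappa h\,\|p\|_{L^2(\widetilde{T}_1)}^2.
\]
Hypothesis (\textbf{H2}) makes $C\kappa h<1$, so the perturbation can be absorbed to give the two-sided bound $\|p\|_{L^2(T_1)}\simeq\|p\|_{L^2(\widetilde{T}_1)}$.

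The main obstacle is the polynomial step: for each of the five element types and each listed geometric condition, one must exhibit a fixed-shape reference subdomain of uniformly bounded-below volume that actually lies in $\widetilde{T}_1$ (or $\widetilde{T}_2$). The threshold $\tfrac{1}{2}$ in the edge-length conditions is precisely what is needed to guarantee this inscription; without some such condition, one of the two subelements can degenerate to arbitrarily small volume and no uniform polynomial norm equivalence with $\|\cdot\|_{L^2(T)}$ can hold on that side. Verifying the inscription case by case is a finite amount of elementary geometry, and once it is done the uniformity of the equivalence constant in the interface location is automatic because the reference region does not depend on the cut.
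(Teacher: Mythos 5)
Your overall strategy is the right one and is essentially the paper's: exhibit a fixed-shape region of volume bounded below that is guaranteed to lie inside the relevant subelement, and then invoke finite-dimensional norm equivalence for $\mathbb{Q}_1$ (the paper does this via a pyramid $P_1$ with apex at a vertex of $T$ and a homothetic enlargement $P_1'\supset T$, citing Lemma 2.2 of \cite{2016WangXiaoXu}). However, there is a concrete error in where you anchor the inscribed region. For Type III the hypothesis $|A_4D_4|\le\frac{1}{2}|A_4A_3|$ constrains only the intersection point $D_4$ on the edge $A_4A_3$; the remaining points $D_1$, $D_2$, $D_3$ are free to sit anywhere on their edges, and in particular $D_1$ may be arbitrarily close to $A_1$. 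Hence the plane $\tau_T$ is \emph{not} uniformly separated from the corner $A_1$, and no fixed axis-aligned cube anchored at $A_1$ can be inscribed in $\widetilde{T}_1$ uniformly in the interface location. The vertex protected by the hypothesis is $A_3$ (the far endpoint of the constrained edge), and this is exactly where the paper places its pyramid $A_3E_1E_2E_3$, with $E_1$, $E_2$, $E_3$ chosen at explicit fractions $1/10$, $1/20$, $1/10$ along the edges emanating from $A_3$, so that an explicit computation gives $\mathrm{dist}(E_i,\tau_T)\ge\sqrt{1/6}\cdot 0.9h=0.3674h$; the slack beyond the bound $\mathrm{dist}(\tau_T,\Gamma\cap T)\le 0.3386h$ coming from (\textbf{H2}) and \eqref{interf_element_est_eq0_dist} then puts the same pyramid inside both $\widetilde{T}_1$ and the curved subelement $T_1$ simultaneously.

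That last observation also lets the paper dispense with your second (perturbation) step entirely: since the one fixed pyramid $P_1$ lies in $T_1\cap\widetilde{T}_1$, both $\|v\|_{L^2(T_1)}$ and $\|v\|_{L^2(\widetilde{T}_1)}$ are sandwiched between $\|v\|_{L^2(P_1)}$ and $\|v\|_{L^2(T)}$, which are comparable on $\mathbb{Q}_1$. Your alternative route via $|T_1\triangle\widetilde{T}_1|\le C\kappa h^4$ and the inverse estimate $\|p\|_{L^\infty(T)}\le Ch^{-3/2}\|p\|_{L^2(T)}$ is sound in principle, but the absorption step needs $C\kappa h<1$ for an untracked constant $C$, which does not follow from (\textbf{H2}) alone ($\kappa h\le 0.0288$) unless $C$ is made explicit; the paper's common-inscribed-region argument avoids this issue and keeps the constants computable. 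With the anchor moved to the correct vertex and this simplification, your proof coincides with the paper's.
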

\begin{proof}
Without loss of generality, we consider the Type III interface element in Figure \ref{fig:subfig_interf}(\subref{inter_elem_case3}), and only prove \eqref{norm_equiv_01}. In this case the approximate plane $\tau_T$ passing through any three points of $D_1$, $D_2$, $D_3$ and $D_4$ has the geometric approximation given in Theorem \ref{lem_interf_element_est}; and thus by symmetricity, without loss of generality, we only need to consider the case $|A_4D_4|\le\frac{1}{2}|A_4A_3|$. First of all, the hypothesis (\textbf{H2}) and \eqref{interf_element_est_eq0_dist} indicate that 
\begin{equation}
\label{norm_equiv_eq1}
\text{dist}(\tau_T,\Gamma\cap T) \le  12.0927(\kappa h)h \le 0.3386 h.
\end{equation}
We then consider the pyramid $A_3E_1E_2E_3$ denoted by $P_1$ with $E_1$, $E_2$ and $E_3$ satisfying $|A_3E_3|/|A_3A_7|=|A_3E_1|/|A_3A_1|=1/10$ and $|A_3E_2|/|A_3A_4|=1/20$ as shown in Figure \ref{fig:norm_equiv}. We can directly calculate that the shortest distance from $E_1$, $E_2$ and $E_3$ to the plane $\tau_T$ is 
$\sqrt{1/6}\cdot0.9 h=0.3674h>0.3386 h$. Hence, \eqref{norm_equiv_eq1} shows that $E_1,$ $E_2$ and $E_3$ are all in $T_1$, and thus the pyramid $A_3E_1E_2E_3$ is always inside $T_1$ regardless of the interface location. Therefore, we can construct a new pyramid $A_3E_1'E_2'E_3'$ denoted by $P'_1$ such that it is homothetic to $P_1$ and always contains the cubic element $T$, as illustrated by Figure \ref{fig:norm_equiv}. By Lemma 2.2 in \cite{2016WangXiaoXu}, we have for any $v\in \mathbb{Q}_1$, there holds
\begin{equation}
\label{norm_equiv_eq2} 
\| v \|_{L^2(T)} \le C \| v \|_{L^2(P'_1)} \le C \| v \|_{L^2(P_1)}  \le C \| v \|_{L^2(T_1)}. 
\end{equation}
Clearly, the pyramid $P_1$ must be always inside the subelement $\widetilde{T}_1$, then by similar derivation to \eqref{norm_equiv_eq2}, we still have $\| v \|_{L^2(T)}\le C \| v \|_{L^2(\widetilde{T}_1)}$. Using the simple geometry, we immediately have
\[\| v \|_{L^2(T_1)}\le C \| v \|_{L^2(T)},~~~\text{ and}~~~ \| v \|_{L^2(\widetilde{T}_1)}\le C \| v \|_{L^2(T)}.\] Combining these estimates, we arrive at \eqref{norm_equiv_01}.
\end{proof}
\begin{figure}[h]
\centering
     \includegraphics[width=1.8in]{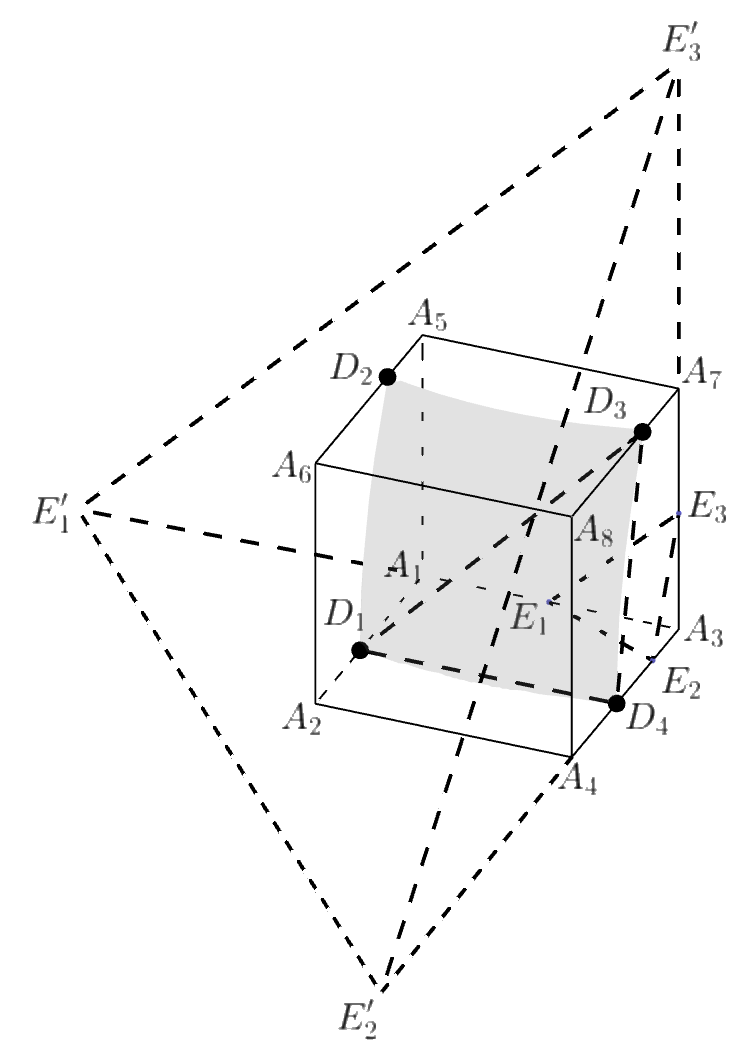}
     \caption{Type III interface element inclusion}
  \label{fig:norm_equiv} 
\end{figure}

Next we prove the following kind of trace inequality on a pyramid.

\begin{lemma}
\label{lem_plane_est}
Given a pyramid $K$ with a convex polygonal base $B$, suppose $B$ has $N_B$ edges, then
\begin{equation}
\label{lem_plane_est_eq0}
\|  p(X_0) \| \le C N^{1/2}_B  |K|^{-1/2} \| p \|_{L^2(K)}, ~~~ \forall p\in \mathbb{Q}_1, ~~\forall X_0\in B.
\end{equation}
\end{lemma}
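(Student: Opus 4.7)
My plan is to reduce Lemma~\ref{lem_plane_est} to a shape-independent polynomial inverse estimate on a tetrahedron, by combining a fan decomposition of $K$ centered at $X_0$ with a pigeonhole volume argument.

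First, I would decompose the base $B$ by connecting $X_0$ to each vertex of $B$. Since $B$ is convex and $X_0\in B$, this fan produces at most $N_B$ triangular regions $B_1,\dots,B_{N_B}$ tiling $B$, each having $X_0$ as a vertex (triangles degenerating to segments, when $X_0$ lies on an edge or at a vertex of $B$, contribute nothing and can be discarded). Joining each $B_j$ with the apex $V$ of $K$ yields a tetrahedron $K_j=\mathrm{conv}(B_j\cup\{V\})$; the collection $\{K_j\}$ partitions $K$ into at most $N_B$ tetrahedra with mutually disjoint interiors, and $X_0$ is a vertex of every $K_j$. Since $\sum_j |K_j| = |K|$, pigeonhole produces an index $j^*$ with $|K_{j^*}| \geq |K|/N_B$.

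Next, I would invoke the following shape-independent polynomial inverse estimate as a sub-lemma: for every $q\in\mathbb{P}_3(\mathbb{R}^3)$ and every nondegenerate tetrahedron $\widehat{K}$ with a vertex $\widehat{V}$,
\begin{equation*}
|q(\widehat{V})|^2 \;\le\; \bar{C}\,|\widehat{K}|^{-1}\,\|q\|_{L^2(\widehat{K})}^2,
\end{equation*}
where $\bar{C}$ depends only on the polynomial degree. This is standard: the ratio $|q(\widehat V)|^2|\widehat K|/\|q\|_{L^2(\widehat K)}^2$ is invariant under affine pullback (the Jacobian cancels), so one may map $\widehat K$ to a fixed reference tetrahedron, on which $\bar C$ is simply the finite operator norm produced by equivalence of norms on the finite-dimensional space $\mathbb{P}_3$. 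Because $\mathbb{Q}_1\subset\mathbb{P}_3$, this estimate applies to $p$ on $K_{j^*}$ with $\widehat{V}=X_0$.

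Combining the pieces gives
\begin{equation*}
|p(X_0)|^2 \;\le\; \bar{C}\,|K_{j^*}|^{-1}\,\|p\|_{L^2(K_{j^*})}^2 \;\le\; \bar{C}\,\frac{N_B}{|K|}\,\|p\|_{L^2(K)}^2,
\end{equation*}
and the lemma follows after taking square roots. The one subtlety is that the sub-lemma must be phrased for $\mathbb{P}_3$ rather than $\mathbb{Q}_1$, since $\mathbb{Q}_1$ is not closed under general affine maps; the inclusion $\mathbb{Q}_1\subset\mathbb{P}_3$ resolves this at no cost. I anticipate no other real obstacle.
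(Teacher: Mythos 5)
Your proposal is correct and follows essentially the same route as the paper: the same fan decomposition of $K$ into at most $N_B$ tetrahedra sharing the vertex $X_0$, followed by the same pigeonhole argument selecting a sub-tetrahedron of volume at least $|K|/N_B$. The only cosmetic difference is that you bound $|p(X_0)|$ directly by $\| p \|_{L^2(K_{j^*})}$ via an affine-invariance argument on $\mathbb{P}_3$, whereas the paper chains a vertex-to-face bound with a face-to-element trace inequality on the sub-tetrahedron; both yield the same shape-independent constant.
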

\begin{proof}
We connect $X_0$ and the vertices of $B$, and thus obtain $N_B$ triangles denoted by $\triangle_i$, $i=1,\cdots,N_B$. Then we connect $X_0$ and the apex of the pyramid to obtain $N_B$ sub-pyramids denoted by $K_i$, $i=1,\cdots,N_B$.
Without loss of generality, we assume $|K_1|\ge |K_2|\ge\cdots\ge |K_{N_B}|$. Then $|K| = \sum_{i=1}^{N_B} |K_i| \le N_B |K_1|$.
Thus, on $K_1$, the standard trace inequality for polynomials \cite{2003WarburtonHesthaven} yields
\begin{equation}
\label{lem_plane_est_eq1}
\begin{split}
|p(X_0)|&\le C |\triangle_1|^{-1/2} \| p \|_{L^2(\triangle_1)} \le C |\triangle_1|^{-1/2} \left( \frac{|\triangle_1|}{|K_1|} \right)^{1/2} \| p \|_{L^2(K_1)} \le C N^{1/2}_B  |K|^{-1/2} \| p \|_{L^2(K)}.
 \end{split}
\end{equation}
\end{proof}

Let $T$ be an interface element of the configuration shown in Figure \ref{fig:subfig_interf}. Recall that the subelement $T_1$ contains the vertex $A_1$ and $T_2$ contains $A_8$. Then we have the following stability estimates for $\mathcal{C}_T$.

\begin{lemma}
\label{lem_C_stab}
On each interface element $T$, there holds
\begin{subequations}
\label{lem_C_stab_eq0}
\begin{align}
& |\mathcal{C}_T(p)|_{H^j(T_2)} \le C |p|_{H^j(T_2)} , ~~~~ j=0,1, ~~ \forall p\in \mathbb{Q}_1, \label{lem_C_stab_eq01} \\
& |\mathcal{C}^{-1}_T(p)|_{H^j(T_2)} \le C \frac{\beta^+}{\beta^-}   |p|_{H^j(T_2)} ,  ~~~~ j=0,1, ~~\forall p\in \mathbb{Q}_1, \label{lem_C_stab_eq02} 
\end{align}
\end{subequations}
for the interface element types:
\begin{itemize}
\item Types I and II in Figure \ref{fig:subfig_interf};
\item Type III in Figure \ref{fig:subfig_interf}, if $|A_4D_4|\ge\frac{1}{2}|A_4A_3|$ or $|A_2D_1|\ge\frac{1}{2}|A_2A_1|$ or $|A_6D_2|\ge\frac{1}{2}|A_6A_5|$ or $|A_8D_3|\ge\frac{1}{2}|A_8A_7|$;

\item Types IV and V in Figure \ref{fig:subfig_interf}.
\end{itemize}
\end{lemma}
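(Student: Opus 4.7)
The plan is to bound the correction $\mathcal{C}_T(p) - p$ (respectively $\mathcal{C}_T^{-1}(p) - p$) pointwise from the explicit formulas \eqref{C_rela_1}--\eqref{C_rela_2}, and then use Lemma~\ref{norm_equiv} to transfer norms from $T$ onto the sub-element $T_2$. Setting $q := \nabla p(F)\cdot\bar{\mathbf{n}}$, each correction is a scalar multiple of the affine level-set function $L$, so the triangle inequality on $T_2$ gives
\[
|\mathcal{C}_T(p)|_{H^j(T_2)} \le |p|_{H^j(T_2)} + \left|\tfrac{\beta^-}{\beta^+}-1\right|\,|q|\,|L|_{H^j(T_2)},
\]
and the analogous bound for $\mathcal{C}_T^{-1}$ with $\beta^-/\beta^+$ replaced by $\beta^+/\beta^-$. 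The entire task is thus to control $|q|\,|L|_{H^j(T_2)}$ by $|p|_{H^j(T_2)}$ uniformly in the mesh size and the interface location.

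First I would estimate the two deterministic ingredients on the full element. Because $L(X) = (X-F)\cdot\bar{\mathbf{n}}$ is affine with $|\nabla L|\equiv 1$ and $\|L\|_{L^\infty(T)}\le Ch$, plain scaling yields
\[
|L|_{H^j(T_2)} \le |L|_{H^j(T)} \le Ch^{5/2-j},\qquad j=0,1.
\]
For the point value $q$, each component of $\nabla p$ again belongs to (a subspace of) $\mathbb{Q}_1$, so the standard polynomial inverse inequality on the cube, together with one further inverse step in the $j=0$ case, gives $|q|\le Ch^{-3/2}|p|_{H^1(T)}$ and $|q|\le Ch^{-5/2}\|p\|_{L^2(T)}$ respectively. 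Multiplying the two bounds collapses all powers of $h$ and leaves $|q|\,|L|_{H^j(T_2)} \le C\,|p|_{H^j(T)}$.

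The remaining step, and the subtle one, is to pass from $|p|_{H^j(T)}$ to $|p|_{H^j(T_2)}$. This is exactly what the second half of Lemma~\ref{norm_equiv} (i.e.\ \eqref{norm_equiv_02}) provides in each of the listed configurations: $\|\cdot\|_{L^2(T_2)}\simeq\|\cdot\|_{L^2(T)}$ on $\mathbb{Q}_1$, and hence, applied componentwise to $\nabla p$, also $|p|_{H^1(T)}\simeq|p|_{H^1(T_2)}$. Plugging this equivalence into the estimates above yields $|\mathcal{C}_T(p)|_{H^j(T_2)}\le C|p|_{H^j(T_2)}$ (absorbing $|\beta^-/\beta^+-1|$ into $C$) and $|\mathcal{C}_T^{-1}(p)|_{H^j(T_2)}\le C(\beta^+/\beta^-)|p|_{H^j(T_2)}$ (after bounding $|\beta^+/\beta^--1|\le C\,\beta^+/\beta^-$). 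The main obstacle is the legitimacy of the final norm-equivalence step: for a generic interface cut, $T_2$ can degenerate to vanishing relative volume as the interface moves, and no uniform $\mathbb{Q}_1$-norm equivalence on $T_2$ could then hold. The hypothesis of the lemma is tailored precisely so that the case \eqref{norm_equiv_02} of Lemma~\ref{norm_equiv} applies and guarantees that $T_2$ retains a fixed fraction of $T$ uniformly in interface position, which is what ultimately makes the extension operator stable.
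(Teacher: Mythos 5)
Your argument is correct and follows the paper's proof in all essentials: the triangle inequality applied to the explicit formula \eqref{C_rela_1}, the bound $|L|_{H^j(T_2)}\le Ch^{5/2-j}$, a pointwise bound on $\nabla p(F)$ whose negative powers of $h$ cancel those of $L$, and finally the norm equivalence \eqref{norm_equiv_02} (valid exactly in the listed configurations) to localize to $T_2$. The only, harmless, deviation is in how the point value is controlled: you use an $L^\infty$--$L^2$ inverse inequality for $\nabla p$ on the whole cube $T$ followed by the equivalence $\|\cdot\|_{L^2(T)}\simeq\|\cdot\|_{L^2(T_2)}$, whereas the paper applies the pyramid trace inequality of Lemma \ref{lem_plane_est} on a tetrahedron $P\subseteq\widetilde{T}_2$ of volume at least $h^3/12$ and then the equivalence $\|\cdot\|_{L^2(\widetilde{T}_2)}\simeq\|\cdot\|_{L^2(T_2)}$ --- both routes rest on the same Lemma \ref{norm_equiv}.
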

\begin{proof}
Also we only show the proof for the interface element of Type III in Figure \ref{fig:subfig_interf}. First of all, we note that
\begin{equation}
\label{lem_C_stab_eq1}
|L|_{H^j(T_2)}  \le C h^{1-j} |T_2|^{1/2} \le C h^{5/2-j}, ~~~ j=0,1.
\end{equation}
Due to symmetry, we only need to consider the edge $A_4A_3$, namely assuming $|A_4D_4|\ge \frac{1}{2}|A_4A_3|$ as shown in Figure \ref{fig:interface_elem_stability}(\subref{inter_elem_case3_stability_1}). We consider the tetrahedron $A_4D_1D_3D_4$ denoted as $P$. Since $|A_4D_4|\ge \frac{1}{2}|A_4A_3|$, we know that $|P| \ge  h^3/12$. Therefore, according to Lemma \ref{lem_plane_est} and \eqref{lem_C_stab_eq1}, we use \eqref{C_rela_1} to obtain
\begin{equation}
\begin{split}
\label{lem_C_stab_eq2}
| \mathcal{C}_T(p) |_{H^j(T_2)}  & \le | p |_{H^j(T_2)} +  \max\Big\{ \frac{\beta^-}{\beta^+},1\Big\} \|\nabla p(F) \| |L|_{H^j(T_2)} \\
& \le | p |_{H^j(T_2)} + C h^{1-j} \| \nabla p \|_{L^2(P)}   \le  | p |_{H^j(T_2)} + C  | p |_{H^j(P)}
\end{split}
\end{equation}
where in the last inequality we have also used the inverse inequality for $j=0$ on $P$. Furthermore, recalling that $\widetilde{T}_2$ is the subelement cut by the plane passing through $D_1D_4D_3$, and applying \eqref{norm_equiv_02} to the last inequality in \eqref{lem_C_stab_eq2}, we have
\begin{equation}
\begin{split}
\label{lem_C_stab_eq3}
| \mathcal{C}_T(p) |_{H^j(T_2)} & \le  | p |_{H^j(T_2)} + C | p |_{H^j(\widetilde{T}_2)}  \le  | p |_{H^j(T_2)} + C  | p |_{H^j(T_2)},
\end{split}
\end{equation}
which gives \eqref{lem_C_stab_eq01}. For \eqref{lem_C_stab_eq02}, similar to \eqref{lem_C_stab_eq2} and \eqref{lem_C_stab_eq3}, we use \eqref{C_rela_2} to obtain
 \begin{equation}
\begin{split}
\label{lem_C_stab_eq4}
| \mathcal{C}^{-1}_T(p) |_{H^j(T_2)}  & \le | p |_{H^j(T_2)} +  \max\Big\{ \frac{\beta^+}{\beta^-},1\Big\} \|\nabla p(F) \| |L|_{H^j(T_2)}  \le  | p |_{H^j(T_2)} + C \frac{\beta^+}{\beta^-} | p |_{H^j(P)} \le C \frac{\beta^+}{\beta^-} | p |_{H^j(T_2)},
\end{split}
\end{equation}
which finishes the proof.
\end{proof}

\begin{figure}[h]
\centering
\begin{subfigure}{.4\textwidth}
     \includegraphics[width=1.8in]{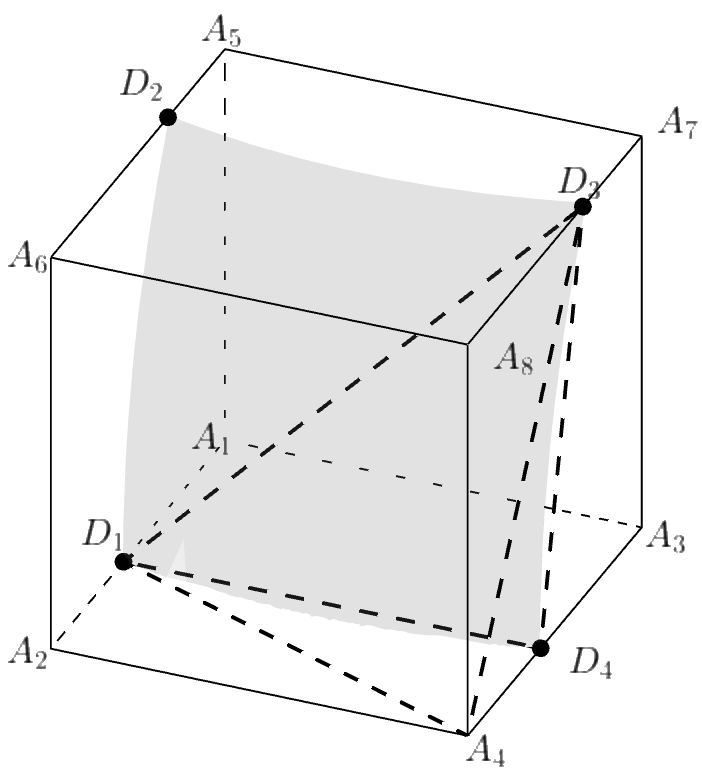}
     \caption{If $|A_4D_4|\ge \frac{1}{2}|A_4A_3|$ }
     \label{inter_elem_case3_stability_1} 
\end{subfigure}
\begin{subfigure}{.4\textwidth}
     \includegraphics[width=1.8in]{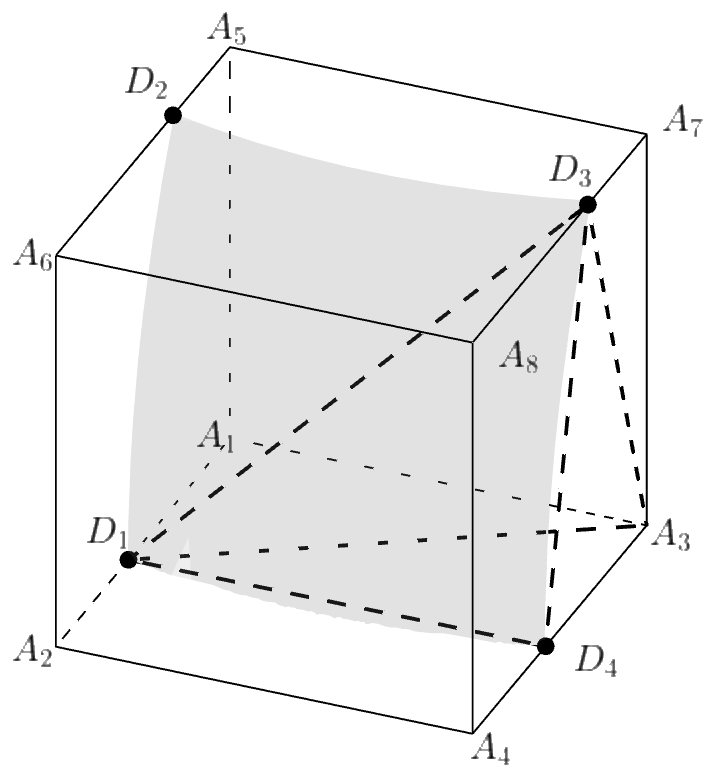}
     \caption{If $|A_4D_4| \le \frac{1}{2}|A_4A_3|$ }
     \label{inter_elem_case3_stability_2} 
\end{subfigure}
     \caption{Proof of Lemmas \ref{lem_C_stab} and \ref{lem_C_stab2}.}
  \label{fig:interface_elem_stability} 
\end{figure}

\begin{lemma}
\label{lem_C_stab2}
On each interface element $T$, there holds
\begin{subequations}
\label{lem_C_stab2_eq0}
\begin{align}
& |\mathcal{C}_T(p)|_{H^j(T_1)} \le  C |p|_{H^j(T_1)} , ~~~~ j=0,1, ~~ \forall p\in \mathbb{Q}_1, \label{lem_C_stab2_eq01}  \\
& |\mathcal{C}^{-1}_T(p)|_{H^j(T_1)} \le  C \frac{\beta^+}{\beta^-}   |p|_{H^j(T_1)} ,  ~~~~ j=0,1, ~~\forall p\in \mathbb{Q}_1, \label{lem_C_stab2_eq02} 
\end{align}
\end{subequations}
for the interface element types
\begin{itemize}
\item Type III in Figure \ref{fig:subfig_interf}, if $|A_4D_4|\le \frac{1}{2}|A_4A_3|$ or $|A_2D_1|\le \frac{1}{2}|A_2A_1|$ or $|A_6D_2|\le \frac{1}{2}|A_6A_5|$ or $|A_8D_3|\le \frac{1}{2}|A_8A_7|$;
\item Type V in Figure \ref{fig:subfig_interf}.
\end{itemize}
\end{lemma}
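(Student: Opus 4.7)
The plan is to mirror the proof of Lemma \ref{lem_C_stab}, with $T_2$ replaced by $T_1$ throughout and the other half of the norm equivalence of Lemma \ref{norm_equiv} invoked at the end. The hypotheses we now have---Type III under $|A_4D_4|\le \tfrac{1}{2}|A_4A_3|$ (and its three symmetric variants) together with Type V---are precisely those under which (i) the equivalence \eqref{norm_equiv_01} holds for $T_1$ and $\widetilde{T}_1$, and (ii) $T_1$ is the ``large'' subelement in the sense that it contains a tetrahedron $P$ of volume $\gtrsim h^3$ whose base is a triangle lying in $\tau_T$ and containing the centroid $F$.

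First, using the explicit formulas \eqref{C_rela_1}--\eqref{C_rela_2} and the triangle inequality, I would obtain
\begin{equation*}
|\mathcal{C}_T(p)|_{H^j(T_1)} \le |p|_{H^j(T_1)} + \max\Big\{\tfrac{\beta^-}{\beta^+},1\Big\}\,\|\nabla p(F)\|\,|L|_{H^j(T_1)},
\end{equation*}
and the analogous bound for $\mathcal{C}_T^{-1}$ with the ratio $\beta^+/\beta^-$. Since $|T_1|\le h^3$, the linearity of $L$ yields $|L|_{H^j(T_1)}\le Ch^{5/2-j}$, exactly as in \eqref{lem_C_stab_eq1}. Next I would exhibit the pyramid $P\subset T_1$: for the Type III sub-case $|A_4D_4|\le \tfrac{1}{2}|A_4A_3|$, it is the mirror image of the one used in Lemma \ref{lem_C_stab}, with apex chosen at the $T_1$-side vertex adjacent to edge $A_3A_4$, as depicted in Figure \ref{fig:interface_elem_stability}(\subref{inter_elem_case3_stability_2}); the inequality $|A_3D_4|\ge h/2$ forces $|P|\ge h^3/12$. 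The three symmetric edge sub-cases for Type III and the Type V case are handled identically, Type V being easier since both subelements are automatically of comparable size.

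Given such a $P$, Lemma \ref{lem_plane_est} applied with $X_0=F$ gives $\|\nabla p(F)\|\le C h^{-3/2}\|\nabla p\|_{L^2(P)}$, and combined with the above bound on $|L|_{H^j(T_1)}$ I obtain
\begin{equation*}
\|\nabla p(F)\|\,|L|_{H^j(T_1)} \le Ch^{1-j}\|\nabla p\|_{L^2(P)} \le C|p|_{H^j(P)},
\end{equation*}
where the last step for $j=0$ uses the standard polynomial inverse inequality on $P$. Finally, since $P\subset \widetilde{T}_1$, the norm equivalence \eqref{norm_equiv_01} of Lemma \ref{norm_equiv} (applied componentwise to $\nabla p \in \mathbb{Q}_1$ when $j=1$) yields $|p|_{H^j(P)}\le |p|_{H^j(\widetilde{T}_1)}\simeq |p|_{H^j(T_1)}$, which delivers both \eqref{lem_C_stab2_eq01} and \eqref{lem_C_stab2_eq02}. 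The main obstacle is the geometric bookkeeping in identifying $P$: one must verify case by case, across the four symmetric Type III sub-cases and for Type V, that the apex can indeed be placed inside $T_1$ and the base triangle in $\tau_T$ still contains $F$. This is a direct mirror of the case analysis already performed for Lemma \ref{lem_C_stab}, so no genuinely new ideas are required.
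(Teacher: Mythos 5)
Your proposal is correct and follows essentially the same route as the paper's proof: the triangle inequality applied to the explicit formula for $\mathcal{C}_T^{\pm 1}$, the bound $|L|_{H^j(T_1)}\le Ch^{5/2-j}$, Lemma \ref{lem_plane_est} on the tetrahedron $P=A_3D_1D_4D_3$ of volume at least $h^3/12$, the inverse inequality for $j=0$, and finally the norm equivalence \eqref{norm_equiv_01}. Your explicit remark that the equivalence is applied componentwise to $\nabla p$ when $j=1$ is a detail the paper leaves implicit, but otherwise the arguments coincide.
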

\begin{proof}
We still only consider the interface element of Type III and the edge $A_4A_3$, and without loss of generality we assume $|A_4D_4|\le \frac{1}{2}|A_4A_3|$, i.e., $|A_3D_4|\ge |A_4A_3|/2$ as shown in Figure \ref{fig:interface_elem_stability}(\subref{inter_elem_case3_stability_2}). In this case, we consider the tetrahedron $P=A_3D_1D_4D_3$ and by a similar discussion, we also have $|P| \ge  h^3/12$. Therefore, similar to \eqref{lem_C_stab_eq2}, we have
\begin{equation}
\begin{split}
\label{lem_C_stab2_eq1}
| \mathcal{C}_T(p) |_{H^j(T_1)}  & \le  | p |_{H^j(T_1)} +  \max\Big\{ \frac{\beta^-}{\beta^+},1\Big\} \|\nabla p(F) \| |L|_{H^j(T_1)} \\
& \le  | p |_{H^j(T_1)} + C h^{1-j}  \| \nabla p \|_{L^2(P)}  \le   | p |_{H^j(T_1)} + C  | p |_{H^j(P)}
\end{split}
\end{equation}
where in the last inequality we also use the inverse inequality for $j=0$ on $P$. Finally, similar to \eqref{lem_C_stab_eq3} but applying \eqref{norm_equiv_01} to bound the last term in \eqref{lem_C_stab2_eq1}, we have \eqref{lem_C_stab2_eq01}. \eqref{lem_C_stab2_eq02} can be proved by a similar argument.
\end{proof}

The estimates above can be understood as the stability of the extension operator $\mathcal{C}_T$ for polynomials, and they serve as the foundation of the stability of the PPIFE method, namely, the inverse and trace inequalities. Now we are ready to present those inequalities for the proposed IFE functions. Here we emphasize that both of these inequalities rely essentially on the stability of the operator $\mathcal{C}_T$ given by Lemmas \ref{lem_C_stab} and \ref{lem_C_stab2}.
\begin{theorem}[Trace Inequalities]
\label{thm_trace_inequa}
On each interface element $T$ and its face $F$, one of the following must hold
\begin{subequations}
\label{thm_trace_inequa_eq0}
\begin{align}
    &  \| \nabla\phi_T\cdot\mathbf{ n} \|_{L^2(F)} \le  C h^{-1/2} \| \nabla \phi \|_{L^2(T)}, \label{thm_trace_inequa_eq01}  \\
    &  \| \beta \nabla\phi_T\cdot\mathbf{ n} \|_{L^2(F)} \le  C h^{-1/2} \| \beta \nabla \phi \|_{L^2(T)}.  \label{thm_trace_inequa_eq02}
\end{align}
\end{subequations}
\end{theorem}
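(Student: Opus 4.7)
The key observation is that $\phi_T$ is piecewise $\mathbb{Q}_1$ with $\phi_T|_{T^-}=p$ and $\phi_T|_{T^+}=\mathcal{C}_T(p)$, and both pieces may be regarded as $\mathbb{Q}_1$ polynomials on the whole cuboid $T$. Splitting $F$ along the interface and extending each piece polynomially to all of $F$ gives
\[
\|\nabla\phi_T\cdot\mathbf{n}\|_{L^2(F)}^2 \le \|\nabla p\cdot\mathbf{n}\|_{L^2(F)}^2 + \|\nabla\mathcal{C}_T(p)\cdot\mathbf{n}\|_{L^2(F)}^2,
\]
and the standard polynomial trace inequality on the cuboid $T$ bounds each summand by $Ch^{-1}$ times $\|\nabla p\|_{L^2(T)}^2$ or $\|\nabla\mathcal{C}_T(p)\|_{L^2(T)}^2$. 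The theorem therefore reduces to controlling these two full-cuboid polynomial gradient norms by $\|\nabla\phi_T\|_{L^2(T)}$ in the unweighted case, or by $\|\beta\nabla\phi_T\|_{L^2(T)}$ in the $\beta$-weighted case.

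The hypotheses in Lemma \ref{norm_equiv} on the one hand and Lemmas \ref{lem_C_stab} and \ref{lem_C_stab2} on the other are set up so that they partition every interface element into exactly two configurations: either \eqref{norm_equiv_01} and the $\mathcal{C}_T$-stability estimate \eqref{lem_C_stab2_eq01} both hold on $T_1$, or \eqref{norm_equiv_02} and the $\mathcal{C}_T$-stability estimate \eqref{lem_C_stab_eq01} both hold on $T_2$. I plan to derive the unweighted bound \eqref{thm_trace_inequa_eq01} in the first configuration and the weighted bound \eqref{thm_trace_inequa_eq02} in the second.

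In the first configuration, the norm equivalence replaces the $L^2(T)$-norms of $\nabla p$ and $\nabla\mathcal{C}_T(p)$ by their $L^2(T_1)$-norms with no $\beta$ penalty; Lemma \ref{lem_C_stab2} then controls $\|\nabla\mathcal{C}_T(p)\|_{L^2(T_1)}$ by $\|\nabla p\|_{L^2(T_1)}$, and the latter coincides with $\|\nabla\phi_T\|_{L^2(T_1)}\le\|\nabla\phi_T\|_{L^2(T)}$, which assembles into \eqref{thm_trace_inequa_eq01}. In the second configuration, the mirror argument pulls everything down to $T_2$; here $\|\nabla\mathcal{C}_T(p)\|_{L^2(T_2)}=\|\nabla\phi_T\|_{L^2(T_2)}$ is immediate, but to control $\|\nabla p\|_{L^2(T_2)}$ I would write $p=\mathcal{C}_T^{-1}(\mathcal{C}_T(p))$ and invoke \eqref{lem_C_stab_eq02}, which introduces a factor $\beta^+/\beta^-$. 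This factor is harmless after $\beta$-weighting: the contribution $(\beta^-)^2\|\nabla p\|_{L^2(T)}^2$ is dominated by $(\beta^-)^2(\beta^+/\beta^-)^2\|\nabla\phi_T\|_{L^2(T_2)}^2=(\beta^+)^2\|\nabla\phi_T\|_{L^2(T_2)}^2\le\|\beta\nabla\phi_T\|_{L^2(T)}^2$, yielding \eqref{thm_trace_inequa_eq02}.

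The principal obstacle, and the reason the theorem states a disjunction rather than a single uniform inequality, is this asymmetric behaviour of $\mathcal{C}_T$ (contractive, no $\beta$ penalty) versus $\mathcal{C}_T^{-1}$ (amplifying by $\beta^+/\beta^-$): whichever subelement carries the norm equivalence and the contractive direction of the extension operator dictates which of \eqref{thm_trace_inequa_eq01} or \eqref{thm_trace_inequa_eq02} can be obtained. A minor bookkeeping point is to verify that the geometric hypotheses in Lemma \ref{norm_equiv} and those in Lemmas \ref{lem_C_stab} and \ref{lem_C_stab2} are exhaustive and pairwise matched across the five interface-element types, so that both ingredients are always simultaneously available; this has already been arranged in the statements of those lemmas.
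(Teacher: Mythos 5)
Your proposal is correct and follows essentially the same route as the paper: split the face into $F^\pm$, apply the standard polynomial trace inequality on the whole cuboid to each polynomial component, and then reduce the full-element norms of $\nabla p$ and $\nabla\mathcal{C}_T(p)$ to $\|\nabla\phi_T\|_{L^2(T)}$ via the norm equivalence of Lemma \ref{norm_equiv} together with the stability of $\mathcal{C}_T$ (Lemmas \ref{lem_C_stab}--\ref{lem_C_stab2}), with the dichotomy between \eqref{thm_trace_inequa_eq01} and \eqref{thm_trace_inequa_eq02} arising exactly as you describe from the $\beta^+/\beta^-$ amplification of $\mathcal{C}_T^{-1}$. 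The paper's two cases ($|A_4D_4|\ge\tfrac{1}{2}|A_4A_3|$ versus $|A_4D_4|\le\tfrac{1}{2}|A_4A_3|$ with $T_1=T^-$, $T_2=T^+$) are precisely your two configurations.
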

\begin{proof}
We also only present the detailed proof for the interface element of Type III in Figure \ref{fig:subfig_interf}. Due to the symmetry, we can assume the subelement containing $A_1$ is $T^-$, i.e., $T_1=T^-$, and then the subelement containing $A_8$ is $T^+$, i.e., $T_2=T^+$. Furthermore, without loss of generality, we only consider the interface face $F=A_1A_2A_3A_4$. Here we note that $F^s=F\cap T^s$, $s=\pm$, are all curved-edge quadrilaterals. According to the definition \eqref{IFE_space_C}, for each IFE function $\phi_T$ there exists a polynomial $p\in\mathbb{Q}_1$ such that $\phi_T=\phi^-_T=p$ on $T^-$ and $\phi_T=\phi^+_T=\mathcal{C}_T(p)$ on $T^+$. 

On one hand, we first consider the case $|A_4D_4|\ge  \frac{1}{2}|A_4A_3|$. On $T^+$, we simply apply the standard trace inequality \cite{2003WarburtonHesthaven} on the whole element $T$ to obtain
\begin{equation}
\begin{split}
\label{thm_trace_inequa_eq1}
\| \beta^+ \nabla\phi_T\cdot\mathbf{ n}\|_{L^2(F^+)} &= \| \beta^+ \nabla \mathcal{C}_T(p)\cdot\mathbf{ n}\|_{L^2(F^+)} \le  \| \beta^+ \nabla \mathcal{C}_T(p)\cdot\mathbf{ n}\|_{L^2(F)}  \\
 & \le  Ch^{-1/2} |\beta^+\mathcal{C}_T(p)|_{H^1(T)} \le   Ch^{-1/2} |\beta^+\mathcal{C}_T(p)|_{H^1(T^+)} 
\end{split}
\end{equation} 
where in the last inequality we have used \eqref{norm_equiv_02}. Similarly, applying the standard trace inequality \cite{2003WarburtonHesthaven} on the whole element $T$ with \eqref{lem_C_stab2_eq02}, we have
\begin{equation}
\begin{split}
\label{thm_trace_inequa_eq2}
 &\| \nabla\phi_T\cdot\mathbf{ n}\|_{L^2(F^-)} = \| \nabla p \cdot\mathbf{ n}\|_{L^2(F^-)} \le  \| \nabla p \cdot\mathbf{ n}\|_{L^2(F)} \le  Ch^{-1/2} |p|_{H^1(T)} \\
\le &  Ch^{-1/2} \left( |\mathcal{C}^{-1}_T(\mathcal{C}_T(p))|_{H^1(T^+)} + |p|_{H^1(T^-)} \right) \le  Ch^{-1/2} \left( \frac{\beta^+}{\beta^-} |\mathcal{C}_T(p)|_{H^1(T^+)} + |p|_{H^1(T^-)} \right). 
\end{split}
\end{equation}
Combining \eqref{thm_trace_inequa_eq1} and \eqref{thm_trace_inequa_eq2}, we have the desired result \eqref{thm_trace_inequa_eq02}.

On the other hand, if $|A_4D_4|\le  \frac{1}{2}|A_4A_3|$, we apply the argument \eqref{thm_trace_inequa_eq1} to $\nabla\phi_T\cdot\mathbf{ n}$ on $T^-$ with \eqref{norm_equiv_01} to obtain
\begin{equation}
\begin{split}
\label{thm_trace_inequa_eq3}
\| \nabla\phi_T\cdot\mathbf{ n}\|_{L^2(F^-)} &= \| \nabla p\cdot\mathbf{ n}\|_{L^2(F^-)} \le  \| \nabla p\cdot\mathbf{ n}\|_{L^2(F)} \le  Ch^{-1/2} | p |_{H^1(T)} \le  Ch^{-1/2} | p |_{H^1(T^-)}. 
\end{split}
\end{equation}
In addition, we apply the argument \eqref{thm_trace_inequa_eq2} to $\nabla\phi_T\cdot\mathbf{ n}$ on $T^+$ with \eqref{lem_C_stab_eq01} to obtain
\begin{equation}
\begin{split}
\label{thm_trace_inequa_eq4}
& \| \nabla\phi_T\cdot\mathbf{ n}\|_{L^2(F^+)} = \| \nabla \mathcal{C}_T(p) \cdot\mathbf{ n}\|_{L^2(F^+)} \le  \| \nabla \mathcal{C}_T(p) \cdot\mathbf{ n}\|_{L^2(F)}  \le  Ch^{-1/2} |\mathcal{C}_T(p)|_{H^1(T)}\\
 \le &  Ch^{-1/2} \left( |\mathcal{C}_T(p)|_{H^1(T^+)} + | \mathcal{C}_T(p) |_{H^1(T^-)} \right) \le  Ch^{-1/2} \left( |\mathcal{C}_T(p)|_{H^1(T^+)} + |p|_{H^1(T^-)} \right) \le  Ch^{-1/2} |\phi_T|_{H^1(T)}.
\end{split}
\end{equation}
Finally combining \eqref{thm_trace_inequa_eq3} and \eqref{thm_trace_inequa_eq4}, we have \eqref{thm_trace_inequa_eq01}
\end{proof}

\begin{remark}
\label{rem_trace_inequa}
Roughly speaking, for each interface element $T$, if the size of the subelement corresponding the larger coefficient $\beta^+$ shrinks to $0$, then the trace inequality \eqref{thm_trace_inequa_eq02} holds. On the other hand, if the subelement corresponding the smaller coefficient $\beta^-$ shrinks, then \eqref{thm_trace_inequa_eq01} holds. These two inequalities can be unified as the following one
\begin{equation}
\label{rem_trace_inequa_eq}
\| \beta \nabla\phi_T\cdot\mathbf{ n} \|_{L^2(F)} \le  C h^{-1/2} \frac{\beta^+}{\sqrt{\beta^-}} \| \sqrt{\beta} \nabla \phi \|_{L^2(T)}.
\end{equation}
\end{remark}

In addition, we note that the IFE functions on interface elements may not be continuous across the interface. Here we present a special type of trace inequality for IFE functions which shows the difference between the two polynomial components on interface can be bounded by the IFE function on the element with certain optimal order of $h$. This is important to estimate the inconsistence error. For this purpose, let us first estimate $L(X)=(X-F)\cdot\bar{\mathbf{ n}}$ on $\Gamma\cap T$.

\begin{lemma}
\label{lem_L_est}
For each interface element $T$, there holds
\begin{equation}
\label{lem_L_est_eq0}
\| L \|_{L^2(\Gamma\cap T)} \le  Ch^3.
\end{equation}
\end{lemma}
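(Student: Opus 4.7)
The plan is to combine two earlier results. First, I would observe that $L(X) = (X-F)\cdot\bar{\mathbf{n}}$ is, up to sign, exactly the signed distance from $X$ to the approximating plane $\tau_T$, because $\bar{\mathbf{n}}$ is the unit normal to $\tau_T$ and $F$ lies on $\tau_T$. Hence for every $X \in \Gamma \cap T$ we have $|L(X)| = \|X - X_\bot\|$, where $X_\bot$ is the orthogonal projection of $X$ onto $\tau_T$.

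Next, I would invoke the pointwise geometric estimate \eqref{interf_element_est_eq0_dist} from Lemma \ref{lem_interf_element_est}, which gives $\|X - X_\bot\| \le 12.0927\,\kappa h^2$ for all $X \in \Gamma \cap T$. This yields an $L^\infty$ bound
\begin{equation*}
\| L \|_{L^\infty(\Gamma \cap T)} \le C h^2,
\end{equation*}
where $C$ absorbs the fixed constant and the curvature $\kappa$.

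Finally, I would control the surface measure using Lemma \ref{lem_interf_area}, which guarantees $\text{meas}(\Gamma \cap T) \le C h^2$. Combining these two ingredients,
\begin{equation*}
\| L \|_{L^2(\Gamma \cap T)}^2 = \int_{\Gamma \cap T} |L(X)|^2\, dS \le \| L \|_{L^\infty(\Gamma \cap T)}^2 \cdot \text{meas}(\Gamma \cap T) \le (Ch^2)^2 \cdot Ch^2 = Ch^6,
\end{equation*}
and taking square roots delivers the desired bound $\| L \|_{L^2(\Gamma \cap T)} \le Ch^3$.

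There is no real obstacle here; this is an immediate corollary of the geometric lemmas already established. The only subtlety worth pausing on is the identification of $|L|$ with the distance to $\tau_T$, which requires that $\bar{\mathbf{n}}$ is a unit vector and that the reference point $F$ (the centroid of the defining triangle on $\tau_T$) actually lies on $\tau_T$ — both are true by construction.
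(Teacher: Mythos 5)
Your proposal is correct and follows essentially the same route as the paper: both replace $(X-F)\cdot\bar{\mathbf{n}}$ by $(X-X_{\bot})\cdot\bar{\mathbf{n}}$ (equivalently, identify $|L(X)|$ with the distance to $\tau_T$), apply the pointwise bound \eqref{interf_element_est_eq0_dist}, and multiply by the surface-measure bound of Lemma \ref{lem_interf_area} to get $Ch^2\cdot h = Ch^3$. No differences worth noting.
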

\begin{proof}
For each $X\in\Gamma\cap T$, we denote the projection of $X$ onto the approximate plane $\tau_T$ by $X_{\bot}$. Using \eqref{interf_element_est_eq0_dist}, the fact $L(X)=0$ for every $X\in\tau_T$, and Lemma \ref{lem_interf_area} we have
\begin{equation}
\begin{split}
\label{lem_L_est_eq1}
\| L \|_{L^2(\Gamma\cap T)} &= \left ( \int_{\Gamma\cap T} ((X-F)\cdot\bar{\mathbf{ n}})^2 dS  \right)^{1/2} = \left ( \int_{\Gamma\cap T} ((X-X_{\bot})\cdot\bar{\mathbf{ n}})^2 dS  \right)^{1/2} \le  C \| X - X_{\bot}\| |\Gamma\cap T|^{1/2} \le  Ch^3.
\end{split}
\end{equation}
\end{proof}

\begin{theorem}
\label{thm_trace_interface}
For each interface element $T$, there holds
\begin{equation}
\label{thm_trace_interface_eq0}
\| [\phi_T] \|_{L^2(T\cap\Gamma)} \le  C \frac{\sqrt{\beta^+}}{\beta^-} h^{3/2} \| \sqrt{\beta} \nabla \phi_T \|_{L^2(T)}, ~~~ \forall \phi_T \in S_h(T).
\end{equation}
\end{theorem}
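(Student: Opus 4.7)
The approach is to exploit the explicit formula \eqref{C_rela_1} for the extension operator. Writing any $\phi_T\in S_h(T)$ as $\phi_T^-=p\in\mathbb{Q}_1$ and $\phi_T^+=\mathcal{C}_T(p)$, formula \eqref{C_rela_1} gives the pointwise identity on $\Gamma\cap T$
\[
[\phi_T] \;=\; \phi_T^+-\phi_T^- \;=\; \Bigl(\tfrac{\beta^-}{\beta^+}-1\Bigr)\bigl(\nabla p(F)\cdot\bar{\mathbf{n}}\bigr)\,L.
\]
Since $\nabla p(F)\cdot\bar{\mathbf n}$ is a scalar constant, the $L^2(\Gamma\cap T)$-norm of the jump factors into three pieces: the coefficient factor $|\beta^-/\beta^+-1|$ (bounded by $1$ under the standing convention $\beta^+\ge\beta^-$ that is implicit throughout this section, cf.\ Remark \ref{rem_trace_inequa}), the point value $|\nabla p(F)|$, and $\|L\|_{L^2(\Gamma\cap T)}$, which Lemma \ref{lem_L_est} already bounds by $Ch^3$.

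The only nontrivial step is therefore to control $|\nabla p(F)|$ in terms of $\|\sqrt{\beta}\nabla\phi_T\|_{L^2(T)}$ with the correct $\beta$-dependent constant. Since every component of $\nabla p$ lies in $\mathbb{Q}_1$ and $F\in T$, a routine scaling argument from the reference cube gives the inverse estimate $|\nabla p(F)|\le Ch^{-3/2}\|\nabla p\|_{L^2(T)}$. I would then reduce the $L^2(T)$-norm of $\nabla p$ to one of the two subelements using Lemma \ref{norm_equiv}, which ensures that for every configuration in Figure \ref{fig:subfig_interf} at least one of the equivalences \eqref{norm_equiv_01}--\eqref{norm_equiv_02} is available.

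Accordingly I split into two cases. If \eqref{norm_equiv_01} applies with $T_1=T^-$, then $\|\nabla p\|_{L^2(T)}\le C\|\nabla p\|_{L^2(T^-)}=C\|\nabla\phi_T\|_{L^2(T^-)}$ since $p\equiv\phi_T^-$ on $T^-$, producing after $\sqrt{\beta}$-weighting a prefactor $(\beta^-)^{-1/2}$, which is bounded by $\sqrt{\beta^+}/\beta^-$ when $\beta^+\ge\beta^-$. If instead \eqref{norm_equiv_02} applies with $T_2=T^+$, then $\|\nabla p\|_{L^2(T)}\le C\|\nabla p\|_{L^2(T^+)}$, and writing $p=\mathcal{C}_T^{-1}(\phi_T^+)$ on $T^+$ together with the $H^1$-stability estimate \eqref{lem_C_stab_eq02} of Lemma \ref{lem_C_stab} yields $\|\nabla p\|_{L^2(T^+)}\le C(\beta^+/\beta^-)\|\nabla\phi_T\|_{L^2(T^+)}$; weighting by $(\beta^+)^{-1/2}$ again produces exactly $\sqrt{\beta^+}/\beta^-$. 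Multiplying the three factors ($\le 1$, $\le Ch^{-3/2}\sqrt{\beta^+}/\beta^-\|\sqrt{\beta}\nabla\phi_T\|_{L^2(T)}$, and $\le Ch^3$) gives the target estimate uniformly in both cases.

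The main obstacle is combinatorial rather than analytic: one must match the geometric subelement labels $T_1,T_2$ of Lemmas \ref{norm_equiv}--\ref{lem_C_stab2} with the physical labels $T^\pm$ in each of the five configurations of Figure \ref{fig:subfig_interf}, and in the Type III case align correctly with the subconfiguration $|A_?D_?|\lessgtr\tfrac12|A_?A_?|$. As in the proof of Theorem \ref{thm_trace_inequa} it suffices to work out the representative Type III interface element; one then verifies that for every branch exactly the pair (norm equivalence, $\mathcal{C}_T^{\pm}$-stability) required is available, so the constant $\sqrt{\beta^+}/\beta^-$ emerges uniformly and the proof concludes.
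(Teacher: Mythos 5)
Your proposal is correct and follows the same skeleton as the paper's proof: write $[\phi_T]$ via the explicit formula \eqref{C_rela_1} as a constant times $L$, bound $\|L\|_{L^2(\Gamma\cap T)}\le Ch^3$ by Lemma \ref{lem_L_est}, and control the point value of the gradient at $F$ by an $h^{-3/2}$-scaled $L^2$ norm, with the same case split on $|A_4D_4|$ versus $\frac{1}{2}|A_4A_3|$ for the representative Type III element. The one place you genuinely diverge is in how that point value is routed to $\|\sqrt{\beta}\nabla\phi_T\|_{L^2(T)}$. You always work with the representation $[\phi_T]=(\beta^-/\beta^+-1)(\nabla p(F)\cdot\bar{\mathbf{n}})L$, bound $|\nabla p(F)|$ by an inverse estimate on the whole element, and then, when the surviving subelement is $T^+$, write $p=\mathcal{C}_T^{-1}(\mathcal{C}_T(p))$ and invoke the stability \eqref{lem_C_stab_eq02} (or \eqref{lem_C_stab2_eq02}) to pay the factor $\beta^+/\beta^-$. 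The paper instead exploits that the jump has a second, equivalent representation, $[\phi_T]=(\beta^+/\beta^--1)(\nabla\mathcal{C}_T(p)(F)\cdot\bar{\mathbf{n}})L$ (see \eqref{thm_trace_interface_eq1}), and in each case selects the representation whose gradient belongs to the polynomial piece native to the large subelement; the point value is then bounded directly by Lemma \ref{lem_plane_est} on a tetrahedron of volume at least $h^3/12$ together with the norm equivalence, so no stability of $\mathcal{C}_T^{-1}$ is needed and the factor $\beta^+/\beta^-$ enters only through the coefficient in front. Both routes produce the same constant $\sqrt{\beta^+}/\beta^-$ (using $\sqrt{\beta^-}/\beta^+\le\sqrt{\beta^+}/\beta^-$), and the combinatorial check you defer does go through: the hypotheses under which \eqref{norm_equiv_01} holds coincide with those of Lemma \ref{lem_C_stab2}, and those of \eqref{norm_equiv_02} coincide with Lemma \ref{lem_C_stab}, so in every branch the required pair of lemmas is simultaneously available.
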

\begin{proof}
We only consider the interface element of Type III shown in Figure \ref{fig:subfig_interf}(\subref{inter_elem_case3}), and without loss of generality we assume that the subelement $T_1$ containing $A_1$ is $T^-$, i.e., $T_1=T^-$, and then the subelement $T_2$ containing $A_8$ is $T^+$, i.e., $T_2=T^+$. According to the relation $\mathcal{C}_T$ between the two polynomial components of an IFE function \eqref{C_rela}, we note that there exists a polynomial $p\in\mathbb{Q}_1$ such that $\phi^+_T=\mathcal{C}_T(p)$, $\phi^-_T=p$ and
\begin{equation}
\label{thm_trace_interface_eq1}
 [\phi_T] = \left( \frac{\beta^-}{\beta^+}-1 \right) (\nabla p(F) \cdot \bar{\mathbf{ n}} ) L = \left( \frac{\beta^+}{\beta^-}-1 \right) (\nabla \mathcal{C}_T(p)(F) \cdot \bar{\mathbf{ n}} ) L.
\end{equation}

If $|A_4D_4|\ge \frac{1}{2}|A_4A_3|$, using the similar argument to \eqref{lem_C_stab_eq2} and \eqref{lem_C_stab_eq3} with Lemma \ref{lem_plane_est} on the tetrahedron $A_4D_1D_4D_3$ as shown in Figure \ref{fig:interface_elem_stability}(\subref{inter_elem_case3_stability_1}), we have 
$|\nabla \mathcal{C}_T(p)(F) \cdot \bar{\mathbf{ n}}|\le  Ch^{-3/2}\|\nabla \mathcal{C}_T(p)\|_{L^2(T^+)}$.
Then we use the second equality in \eqref{thm_trace_interface_eq1} and Lemma \ref{lem_L_est} to obtain
\begin{equation}
\begin{split}
\label{thm_trace_interface_eq2}
\| [\phi_T] \|_{L^2(\Gamma\cap T)} & \le  C\frac{\beta^+}{\beta^-} |\nabla \mathcal{C}_T(p)(F) \cdot \bar{\mathbf{ n}}| \| L \|_{L^2(\Gamma\cap T)} \\
& \le  C\frac{\beta^+}{\beta^-} h^{3/2} \|\nabla \mathcal{C}_T(p)\|_{L^2(T^+)} 
\le   C\frac{\sqrt{\beta^+}}{\beta^-} h^{3/2} \| \sqrt{\beta} \nabla \phi_T \|_{L^2(T)}.
\end{split}
\end{equation}
If $|A_4D_4|\le \frac{1}{2}|A_4A_3|$, using the similar argument to \eqref{lem_C_stab2_eq1} with Lemma \ref{lem_plane_est} on the tetrahedron $A_3D_1D_4D_3$ as shown in Figure \ref{fig:interface_elem_stability}(\subref{inter_elem_case3_stability_2}), we have
\[|\nabla p(F) \cdot \bar{\mathbf{ n}}|\le  Ch^{-3/2}\|\nabla  p\|_{L^2(T^-)}.\] 
Then we apply the first equality in \eqref{thm_trace_interface_eq1} and Lemma \ref{lem_L_est} to obtain
\begin{equation}
\begin{split}
\label{thm_trace_interface_eq3}
\| [\phi_T] \|_{L^2(\Gamma\cap T)} & \le  C\frac{\beta^-}{\beta^+} |\nabla p(F) \cdot \bar{\mathbf{ n}}| \| L \|_{L^2(\Gamma\cap T)}  \le  C \frac{\sqrt{\beta^-}}{\beta^+} h^{3/2} \| \sqrt{\beta^-}\nabla p \|_{L^2(T^-)} \le   C \frac{\sqrt{\beta^-}}{\beta^+} h^{3/2} \| \sqrt{\beta} \nabla \phi_T \|_{L^2(T)}.
\end{split}
\end{equation}
Combining \eqref{thm_trace_interface_eq2} and \eqref{thm_trace_interface_eq3} and noticing that $\frac{\sqrt{\beta^-}}{\beta^+}\le \frac{\sqrt{\beta^+}}{\beta^-}$, we have finished the proof.
\end{proof}

We now present an inverse inequality on the surface.
\begin{lemma}
\label{lem_inver_inequa_surf}
For each interface element $T$, there holds
\begin{equation}
| v |_{H^j(T\cap\Gamma)} \le  C h^{-j} \| v \|_{L^2(T\cap\Gamma)}, ~~~ \forall v \in \mathbb{Q}_1.
\end{equation}
\end{lemma}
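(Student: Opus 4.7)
The case $j=0$ is trivial (both sides coincide), so the substantive claim is $j\ge 1$, and I treat $j=1$ as the representative case; higher $j$ follows the identical template. The plan is to reduce the estimate, by affine rescaling of $T$ to the reference cube $\hat T=[0,1]^3$, to a scale-independent norm equivalence on the finite-dimensional space $\mathbb{Q}_1(\hat T)$, and then to extract a uniform constant from the quantitative geometric control on $\Gamma\cap T$ supplied by Lemma \ref{lem_interf_element_est} together with the hypotheses (\textbf{H1})--(\textbf{H4}).

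Concretely, write $\hat X = (X - X_T)/h$, so that $T$ becomes $\hat T$, $\Gamma\cap T$ becomes a surface $\hat S\subset\hat T$, and $v\in\mathbb{Q}_1(T)$ pulls back to $\hat v\in\mathbb{Q}_1(\hat T)$. Using that surface measure scales by $h^2$ and that each spatial derivative scales by $h^{-1}$, one obtains
\begin{equation*}
\|v\|_{L^2(\Gamma\cap T)} = h\,\|\hat v\|_{L^2(\hat S)},\qquad |v|_{H^j(\Gamma\cap T)} = h^{1-j}\,|\hat v|_{H^j(\hat S)},
\end{equation*}
which reduces the claim to the scale-free inequality $|\hat v|_{H^j(\hat S)}\le C\|\hat v\|_{L^2(\hat S)}$ for every $\hat v\in\mathbb{Q}_1(\hat T)$. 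To handle the curved surface $\hat S$, I parametrize it by the orthogonal projection onto the (scaled) approximating plane $\hat\tau_T$: by \eqref{interf_element_est_prod} together with (\textbf{H2}) one has $\mathbf{n}\cdot\bar{\mathbf{n}}\ge 1/2$, so this projection is a bi-Lipschitz chart with surface Jacobian in $[1,2]$ and with derivatives of the chart change uniformly bounded. Consequently both $\|\hat v\|_{L^2(\hat S)}$ and $|\hat v|_{H^j(\hat S)}$ are equivalent to the corresponding quantities taken on the planar polygonal region $\hat\tau_T$.

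What remains is the norm equivalence $|\hat v|_{H^j(\hat\tau_T)}\le C\|\hat v\|_{L^2(\hat\tau_T)}$ on a flat region, for $\hat v\in\mathbb{Q}_1(\hat T)$. Since $\mathbb{Q}_1(\hat T)$ is eight-dimensional and no nonzero trilinear polynomial can vanish on a planar set of positive $2$-measure, $\|\cdot\|_{L^2(\hat\tau_T)}$ is a genuine norm on $\mathbb{Q}_1(\hat T)$ and $|\cdot|_{H^j(\hat\tau_T)}$ is a continuous seminorm, so a finite-dimensional norm-equivalence argument yields such a bound for each fixed shape of $\hat\tau_T$. The main obstacle is then to arrange that $C$ is uniform across all admissible interface-element configurations; this is where the geometric hypotheses pay off. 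The finite list of five element types in Figure \ref{fig:subfig_interf}, together with the maximum-angle bound $\le 135^\circ$ on the triangle used to build $\tau_T$ (see \cite{2020GuoLin}), confines $\hat\tau_T$ to a compact family of polygonal shapes, each containing a non-degenerate triangle with unit-scale edges and hence a disk of radius bounded below. A standard compactness/continuity argument on this moduli space produces a single constant $C$ that works for every interface element, yielding the stated inverse inequality.
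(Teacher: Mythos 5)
Your overall route is the same as the paper's: the paper also passes from $\Gamma\cap T$ to the approximating plane $\tau_T$ via the orthogonal projection, using \eqref{interf_element_est_prod} to control the surface Jacobian $1/(\bar{\mathbf{n}}\cdot\mathbf{n}(X))$, and then invokes the inverse inequality for polynomials on the flat region $\tau_T$ (the paper does this directly at scale $h$ rather than rescaling to the reference cube, but that is cosmetic). The problem is in your justification of the flat-region step. The claim that ``no nonzero trilinear polynomial can vanish on a planar set of positive $2$-measure'' is false: $\hat v=\hat z-c\in\mathbb{Q}_1$ vanishes identically on the plane $\hat z=c$, and such a plane is a perfectly admissible $\hat\tau_T$ (e.g.\ for a Type II element). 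So $\|\cdot\|_{L^2(\hat\tau_T)}$ is only a seminorm on the eight-dimensional space $\mathbb{Q}_1(\hat T)$, and the inequality you extract from the finite-dimensional argument, $|\hat v|_{H^j(\hat\tau_T)}\le C\|\hat v\|_{L^2(\hat\tau_T)}$, is actually \emph{false} if $|\cdot|_{H^j(\hat\tau_T)}$ carries the ambient derivatives of $\hat v$: for $\hat v=\hat z-c$ the right-hand side is zero while $|\hat v|_{H^1(\hat\tau_T)}=|\hat\tau_T|^{1/2}>0$. No compactness argument over the moduli of configurations can rescue an inequality whose two sides have different kernels.

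The repair, which is what the paper's one-line appeal to ``the inverse inequality on the plane $\tau_T$'' implicitly does, is to work throughout with the \emph{restriction} $v|_{\tau_T}$ and with tangential (intrinsic surface) derivatives only: $v|_{\tau_T}$ is a bivariate polynomial of degree at most $3$ on a polygon of diameter $\simeq h$ containing a disk of radius $\simeq h$ (this is where the classification of configurations and the maximal angle condition enter, as you correctly identified), so the standard two-dimensional inverse inequality gives $|v|_{H^j(\tau_T)}\le Ch^{-j}\|v\|_{L^2(\tau_T)}$ for the tangential seminorm, with a constant uniform over all admissible cuts. Note this reading is also forced by the statement of the lemma itself: with ambient derivatives the lemma fails for $v$ equal to the linear function defining $\tau_T$, since then $\|v\|_{L^2(\Gamma\cap T)}=O(\kappa h^{3})$ by \eqref{interf_element_est_eq0_dist} while the ambient $H^1$ seminorm is of order $h$. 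You should restate your finite-dimensional argument on the quotient of $\mathbb{Q}_1(\hat T)$ by the kernel $\{\hat v:\hat v|_{\hat\tau_T}=\text{const}\}$, or simply cite the planar inverse inequality for bivariate cubics; the rest of your scaling and chart-change bookkeeping is fine.
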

\begin{proof}
For each $X\in \Gamma\cap T$, let $X_{\bot}$ be the projection of $X$ onto the plane $\tau_T$. Then by \eqref{interf_element_est_prod}, we have the following norm equivalence for $v\in H^1(T)$:
\begin{equation}
\label{thm_interface_interface_eq1}
\| v \|_{L^2(\Gamma\cap T)} = \left( \int_{\Gamma\cap T} v^2 dX \right)^{1/2} = \left( \int_{\tau_T} v^2 \frac{1}{\bar{\bfn}\cdot\bfn(X)} dX_{\bot} \right)^{1/2} \simeq \| v \|_{L^2(\tau_T)}.
\end{equation}
Hence, by the inverse inequality on the plane $\tau_T$, we have
\begin{equation}
\label{thm_interface_interface_eq2}
|v|_{H^j(\Gamma\cap T)} \le  C |v|_{H^j(\tau_T)} \le  C h^{-j} \|v\|_{L^2(\tau_T)} \le  C h^{-j} \|v\|_{L^2(\Gamma \cap T)}.
\end{equation}
\end{proof}

\begin{theorem}
\label{thm_interface_interface}
For each interface element $T$, there holds
\begin{equation}
\label{thm_interface_interface_eq0}
\| [\phi_T] \|_{L^2(T\cap\Gamma)} \le  C h^{2} | [\phi_T] |_{H^1(T\cap\Gamma)}, ~~~ \forall \phi_T \in S_h(T).
\end{equation}
\end{theorem}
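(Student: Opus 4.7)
The plan is to reduce the estimate to an elementary pointwise computation by exploiting the explicit form of the jump on $\Gamma\cap T$. By identity \eqref{thm_trace_interface_eq1}, there is a polynomial $p\in\mathbb{Q}_1$ with $\phi^-_T = p$ and $\phi^+_T = \mathcal{C}_T(p)$ such that
\begin{equation*}
[\phi_T](X) \;=\; c\,L(X), \qquad c := \left(\frac{\beta^-}{\beta^+}-1\right)\bigl(\nabla p(F)\cdot\bar{\mathbf{n}}\bigr),
\end{equation*}
where $c$ is a scalar independent of $X$ and $L(X)=(X-F)\cdot\bar{\mathbf{n}}$ is the affine level-set function of $\tau_T$. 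Since $L$ is affine on $\mathbb{R}^3$ with $\nabla L = \bar{\mathbf{n}}$ a unit vector, the ambient gradient of the jump, $\nabla[\phi_T]\equiv c\,\bar{\mathbf{n}}$, is a constant vector of magnitude $|c|$, and therefore
\begin{equation*}
|[\phi_T]|_{H^1(T\cap\Gamma)}^2 \;=\; \int_{T\cap\Gamma}|c\bar{\mathbf{n}}|^2\,dS \;=\; c^2\,\mathrm{meas}(T\cap\Gamma).
\end{equation*}

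For the $L^2$ side the key ingredient is the pointwise closeness of $\Gamma$ to $\tau_T$. If $X_{\bot}$ denotes the projection of $X\in\Gamma\cap T$ onto $\tau_T$, then $L(X_{\bot})=0$, so by \eqref{interf_element_est_eq0_dist},
\begin{equation*}
|L(X)| \;=\; |(X-X_{\bot})\cdot\bar{\mathbf{n}}| \;\le\; \|X-X_{\bot}\| \;\le\; 12.0927\,\kappa\,h^2.
\end{equation*}
Squaring and integrating over $T\cap\Gamma$ gives $\|L\|_{L^2(T\cap\Gamma)}^2 \le C\kappa^2 h^4\,\mathrm{meas}(T\cap\Gamma)$, and hence
\begin{equation*}
\|[\phi_T]\|_{L^2(T\cap\Gamma)}^2 \;=\; c^2\,\|L\|_{L^2(T\cap\Gamma)}^2 \;\le\; C\,h^4\,c^2\,\mathrm{meas}(T\cap\Gamma).
\end{equation*}

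Dividing these two displays and taking square roots cancels $c^2\,\mathrm{meas}(T\cap\Gamma)$ and yields $\|[\phi_T]\|_{L^2(T\cap\Gamma)}\le C h^2\,|[\phi_T]|_{H^1(T\cap\Gamma)}$, with the curvature $\kappa$ (a constant determined by $\Gamma$ alone) absorbed into $C$. I do not foresee any serious obstacle: once the identity $[\phi_T]=cL$ is in hand, the result is essentially the observation that $L$ is of order $\kappa h^2$ on $\Gamma$ while $|\nabla L|\equiv 1$, so the Poincaré-type ratio between $\|\cdot\|_{L^2(T\cap\Gamma)}$ and $|\cdot|_{H^1(T\cap\Gamma)}$ is $O(h^2)$ regardless of how $\Gamma$ cuts $T$. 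The only conceptual point worth emphasizing is that $|\cdot|_{H^1(T\cap\Gamma)}$ is taken with respect to the ambient three-dimensional gradient of $[\phi_T]$ (natural because $[\phi_T]$ is a piecewise polynomial defined on $T$); a tangential-gradient interpretation would allow $\nabla_{\Gamma}L$ to degenerate wherever $\bar{\mathbf{n}} = \mathbf{n}(X)$, and the stated $h^2$ factor would not hold in that reading.
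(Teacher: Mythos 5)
Your proof is correct, and it takes a more direct route than the paper's. The paper treats $w=[\phi_T]$ as a generic $\mathbb{Q}_1$ function vanishing on $\tau_T$: it Taylor-expands $w$ from each $X\in\Gamma\cap T$ to its projection $X_{\bot}\in\tau_T$, obtaining $\|w\|_{L^2(\Gamma\cap T)}\le C\big(h^2|w|_{H^1(\Gamma\cap T)}+h^4|w|_{H^2(\Gamma\cap T)}+h^6|w|_{H^3(\Gamma\cap T)}\big)$, and then absorbs the higher-order terms via the surface inverse inequality of Lemma \ref{lem_inver_inequa_surf}. You instead exploit the exact identity \eqref{thm_trace_interface_eq1}, $[\phi_T]=c\,L$ with $c$ a scalar and $L$ affine, so that $\nabla[\phi_T]\equiv c\,\bar{\mathbf{n}}$ is constant and both sides of \eqref{thm_interface_interface_eq0} are computed exactly in terms of $c^2\,\mathrm{meas}(\Gamma\cap T)$; the only analytic input is the distance bound \eqref{interf_element_est_eq0_dist}. (The division by $c^2\,\mathrm{meas}(\Gamma\cap T)$ is harmless since the case $c=0$ is trivial, and is better phrased as a direct comparison of the two squared quantities.) What your argument buys is brevity, independence from Lemma \ref{lem_inver_inequa_surf}, and an explicit constant proportional to $\kappa$ — in fact your computation shows the sharper bound $\|[\phi_T]\|_{L^2(\Gamma\cap T)}\le C\kappa h^2|[\phi_T]|_{H^1(\Gamma\cap T)}$. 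What the paper's argument buys is generality: it survives when the jump is not an affine function of $X$, e.g.\ for higher-degree IFE spaces or curved approximate interfaces, where your exact cancellation is unavailable. Your closing remark about the ambient versus tangential reading of $|\cdot|_{H^1(\Gamma\cap T)}$ is well taken and consistent with the paper: the Taylor expansion in the paper's proof involves the derivative in the (nearly normal) direction $\bfzeta$, so the paper, too, must be read with the ambient gradient restricted to the surface.
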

\begin{proof}
For simplicity, we denote $w=[\phi_T]$ and note that $w=0$ on the approximate plane $\tau_T$. For each $X\in \Gamma\cap T$, let $X_{\bot}$ be the projection of $X$ onto $\tau_T$. Then the Taylor expansion yields
\begin{equation}
0=w(X_{\bot}) = w(X) + \partial_{\bfzeta}w(X) |X-X_{\bot}| +  \partial^2_{\bfzeta}w(X) |X-X_{\bot}|^2 + \partial^3_{\bfzeta}w(X) |X-X_{\bot}|^3 
\end{equation}
 where $\bfzeta$ is the directional vector from $X_{\bot}$ to $X$. Hence using \eqref{interf_element_est_eq0_dist} we have
 \begin{equation}
\| w \|_{L^2(\Gamma\cap T)} \le  C\Big( h^2|w|_{H^1(\Gamma\cap T)} + h^4|w|_{H^2(\Gamma\cap T)} + h^6|w|_{H^3(\Gamma\cap T)}\Big)
\end{equation}
which yields the desired result by Lemma \ref{lem_inver_inequa_surf}.
\end{proof}

\begin{theorem}[Inverse inequalities]
\label{thm_inver_inequa}
For each interface element $T$, there holds
\begin{equation}
\label{thm_inver_inequa_eq0}
\| \nabla\phi_T \|_{L^2(T)} \le  C \frac{\beta^+}{\beta^-} h^{-1} \| \phi \|_{L^2(T)}, ~~~~ \forall \phi_T\in S_h(T).
\end{equation}
\end{theorem}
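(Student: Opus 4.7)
The plan is to follow the same template used in the proof of the trace inequality (Theorem \ref{thm_trace_inequa}). On an interface element $T$ of, say, Type III with $T_1 = T^-$ and $T_2 = T^+$ (without loss of generality), write $\phi_T^- = p$ and $\phi_T^+ = \mathcal{C}_T(p)$ with $p \in \mathbb{Q}_1$. The key observation is that for \emph{any} $q \in \mathbb{Q}_1$, the classical polynomial inverse inequality on the whole cuboid $T$ gives $\|\nabla q\|_{L^2(T)} \le C h^{-1}\|q\|_{L^2(T)}$. I would therefore extend each of the two polynomial components to all of $T$, apply the cuboid inverse inequality, and then compress back to the correct subelement using the norm equivalence of Lemma \ref{norm_equiv} together with the extension-operator stability established in Lemmas \ref{lem_C_stab} and \ref{lem_C_stab2}.

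Split the estimate as $\|\nabla\phi_T\|_{L^2(T)}^2 = \|\nabla p\|_{L^2(T^-)}^2 + \|\nabla \mathcal{C}_T(p)\|_{L^2(T^+)}^2$ and treat two geometric subcases, parallel to those that separate Lemmas \ref{lem_C_stab} and \ref{lem_C_stab2}. In the first subcase (e.g.\ $|A_4 D_4| \ge \tfrac12 |A_4 A_3|$), \eqref{norm_equiv_02} gives $\|q\|_{L^2(T)} \simeq \|q\|_{L^2(T^+)}$ on $\mathbb{Q}_1$, and Lemma \ref{lem_C_stab} controls $\mathcal{C}_T^{\pm 1}$ on $T^+$. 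Then
\[
\|\nabla \mathcal{C}_T(p)\|_{L^2(T^+)} \le \|\nabla \mathcal{C}_T(p)\|_{L^2(T)} \le C h^{-1}\|\mathcal{C}_T(p)\|_{L^2(T)} \le C h^{-1}\|\phi_T\|_{L^2(T^+)},
\]
and analogously $\|\nabla p\|_{L^2(T^-)} \le C h^{-1}\|p\|_{L^2(T^+)}$. To replace $p$ by $\phi_T$ on $T^+$, invoke \eqref{lem_C_stab_eq02} with $q = \mathcal{C}_T(p)$: $\|p\|_{L^2(T^+)} = \|\mathcal{C}_T^{-1}(\mathcal{C}_T(p))\|_{L^2(T^+)} \le C(\beta^+/\beta^-)\|\phi_T\|_{L^2(T^+)}$. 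This is exactly where the factor $\beta^+/\beta^-$ enters.

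In the complementary subcase (e.g.\ $|A_4 D_4| \le \tfrac12 |A_4 A_3|$), the roles swap: \eqref{norm_equiv_01} yields $\|q\|_{L^2(T)} \simeq \|q\|_{L^2(T^-)}$, and Lemma \ref{lem_C_stab2} governs $\mathcal{C}_T$ on $T^-$. Repeating the argument, $\|\nabla p\|_{L^2(T^-)} \le C h^{-1}\|p\|_{L^2(T^-)}$ directly, and
\[
\|\nabla \mathcal{C}_T(p)\|_{L^2(T^+)} \le C h^{-1}\|\mathcal{C}_T(p)\|_{L^2(T)} \le C h^{-1}\|\mathcal{C}_T(p)\|_{L^2(T^-)} \le C h^{-1}\|p\|_{L^2(T^-)},
\]
where the last step uses \eqref{lem_C_stab2_eq01}. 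In this subcase one actually obtains the sharper bound without the $\beta^+/\beta^-$ factor. Taking the worse of the two subcases gives \eqref{thm_inver_inequa_eq0}, and the remaining element configurations (Types I, II, IV, V) follow from the corresponding clauses of Lemmas \ref{norm_equiv}, \ref{lem_C_stab}, \ref{lem_C_stab2} with identical reasoning.

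The main obstacle is bookkeeping rather than analysis: in each configuration one must correctly identify which polynomial component should serve as the "reference" for both $L^2$ norms, so that $\mathcal{C}_T^{-1}$ is applied only on a subelement whose geometric stability is guaranteed by the appropriate lemma. Once this matching between Lemma \ref{norm_equiv} and Lemmas \ref{lem_C_stab}/\ref{lem_C_stab2} is done correctly, the proof reduces to the classical cuboid inverse inequality plus a single extension-stability step, with the factor $\beta^+/\beta^-$ appearing precisely when the polynomial living on the $\beta^-$-side has to be reconstructed from its $\beta^+$-counterpart via $\mathcal{C}_T^{-1}$.
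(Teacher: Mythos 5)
Your proposal is correct and follows essentially the same route as the paper's proof: the same split into the two geometric subcases $|A_4D_4|\gtrless\frac{1}{2}|A_4A_3|$, the cuboid inverse inequality for $\mathbb{Q}_1$ on all of $T$, the norm equivalences of Lemma \ref{norm_equiv}, and the stability of $\mathcal{C}_T^{\pm1}$ from Lemmas \ref{lem_C_stab} and \ref{lem_C_stab2}, with the factor $\beta^+/\beta^-$ entering exactly where $\mathcal{C}_T^{-1}$ is applied. The only (immaterial) difference is that you collapse $\|p\|_{L^2(T)}$ onto a single subelement via the norm equivalence, whereas the paper splits it as $\|p\|_{L^2(T^-)}+\|p\|_{L^2(T^+)}$ before converting the $T^+$ piece.
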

\begin{proof}
Following the convention above, we again only discuss the interface element of Type III shown in Figure \ref{fig:subfig_interf}(\subref{inter_elem_case3}), and without loss of generality we assume that the subelement $T_1$ containing $A_1$ is $T^-$ while the subelement $T_2$ containing $A_8$ is $T^+$. Recall that there is a polynomial $p\in\mathbb{Q}_1$, $\phi^-_T=p$ and $\phi^+_T=\mathcal{C}_T(p)$. The argument is actually similar to the one for Theorem \ref{thm_trace_inequa}. 

First, if $|A_4D_4|\ge \frac{1}{2}|A_4A_3|$, then for $\nabla\phi_T$ on $T^+$, we apply the standard inverse inequality with \eqref{norm_equiv_02} to obtain
\begin{equation}
\begin{split}
\label{thm_inver_inequa_eq1}
\| \nabla\phi_T \|_{L^2(T^+)} &= \| \nabla \mathcal{C}_T(p) \|_{L^2(T^+)}  \le  \| \nabla \mathcal{C}_T(p) \|_{L^2(T)} \le  Ch^{-1} \| \mathcal{C}_T(p) \|_{L^2(T)}  \le  Ch^{-1} \| \mathcal{C}_T(p) \|_{L^2(T^+)}.
\end{split}
\end{equation}
For $\nabla\phi_T$ on $T^-$, we apply the standard inverse inequality and \eqref{lem_C_stab_eq02} to have
\begin{equation}
\begin{split}
\label{thm_inver_inequa_eq2}
\| \nabla \phi_T \|_{L^2(T^-)} &\le  \| \nabla p \|_{L^2(T)} \le  Ch^{-1} \| p \|_{L^2(T)} \le  Ch^{-1} \left( \| p \|_{L^2(T^-)}  + \| p \|_{L^2(T^+)}  \right)\\
&=  Ch^{-1} \left( \| p \|_{L^2(T^-)}  + \| \mathcal{C}^{-1}_T(\mathcal{C}_T(p)) \|_{L^2(T^+)}  \right) \le   Ch^{-1} \left( \| p \|_{L^2(T^-)}  + \frac{\beta^+}{\beta^-}\| \mathcal{C}_T(p) \|_{L^2(T^+)}  \right).
\end{split}
\end{equation}
Combining \eqref{thm_inver_inequa_eq1} and \eqref{thm_inver_inequa_eq2}, we have \eqref{thm_inver_inequa_eq0}.

Second, if $|A_4D_4|\le \frac{1}{2}|A_4A_3|$, then for $\nabla\phi_T$ on $T^+$, applying the argument in \eqref{thm_inver_inequa_eq2} but with \eqref{lem_C_stab2_eq01}, we obtain
\begin{equation}
\begin{split}
\label{thm_inver_inequa_eq3}
& \| \nabla \phi_T \|_{L^2(T^+)}  = \|\mathcal{C}_T(p) \|_{L^2(T^+)} \le  \| \nabla \mathcal{C}_T(p) \|_{L^2(T)} \le  Ch^{-1} \| \mathcal{C}_T(p)  \|_{L^2(T)}\\
 \le &  Ch^{-1} \left( \| \mathcal{C}_T(p) \|_{L^2(T^-)}  + \| \mathcal{C}_T(p) \|_{L^2(T^+)}  \right)  \le   Ch^{-1} \left( \| p \|_{L^2(T^-)}  + \| \mathcal{C}_T(p) \|_{L^2(T^+)}  \right).
\end{split}
\end{equation}
In addition, for $\nabla\phi_T$ on $T^-$, applying the argument in \eqref{thm_inver_inequa_eq1} but with \eqref{norm_equiv_01}, we have
\begin{equation}
\label{thm_inver_inequa_eq4}
\| \nabla\phi_T \|_{L^2(T^-)} = \| \nabla p \|_{L^2(T^-)}  \le  \| \nabla p \|_{L^2(T)} \le  Ch^{-1} \| p \|_{L^2(T)}  \le  Ch^{-1} \| p \|_{L^2(T^-)}.
\end{equation}
Combining \eqref{thm_inver_inequa_eq3} and \eqref{thm_inver_inequa_eq4}, we finish the proof.
\end{proof}


\section{Error Estimates of IFE Solutions}
\label{sec:error}

In this section, we proceed to estimate the errors of the PPIFE scheme \eqref{ppife}. For this purpose, we define the energy norm:
\begin{equation}
\label{energy_norm}
\tbar v \tbar^2 := \sum_{T\in\mathcal{T}_h} \| \sqrt{\beta} \nabla v\|^2_{L^2(T)} + \sum_{F\in\mathcal{F}^i_h } \sigma \| h^{-1/2} \jump{v} \|^2_{L^2(F)} + \sum_{F\in \mathcal{F}^i_h } \frac{1}{\sigma} \| h^{1/2} \aver{\beta\nabla v\cdot\mathbf{n}} \|^2_{L^2(F)}. 
\end{equation}
It is easy to see it is a semi-norm. We begin by showing that the functional above is indeed a norm on the space $V_h(\Omega)$

\begin{lemma}
\label{lem_full_norm}
$\tbar \cdot \tbar$ is a norm of $V_h(\Omega)$.
\end{lemma}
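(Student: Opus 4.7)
My plan is to verify the four norm axioms in turn, with almost all the work concentrated in positive definiteness. Nonnegativity and absolute homogeneity are transparent, since $\tbar v\tbar^2$ is a sum of squared weighted $L^2$-seminorms with positive weights $\beta^\pm$, $\sigma$, and $1/\sigma$. The triangle inequality reduces to applying Minkowski termwise inside each of the three sums (every summand is an $L^2$-norm of a linear functional of $v$) and then combining the three resulting $\ell^2$-bounds.

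Suppose now $\tbar v\tbar=0$. Each of the three summands must vanish separately, which forces (i) $\nabla v\equiv 0$ on every non-interface element $T$ and on every subelement $T^\pm$ of every interface element, (ii) $\jump{v}\equiv 0$ on every $F\in\mathcal{F}^i_h$, and (iii) the flux-average term is then automatic. In particular $v$ is piecewise constant on each non-interface element and on each interface subelement. I would next propagate the constants across the mesh: any two adjacent non-interface elements share four mesh nodes, so the node-continuity clause in the definition of $V_h(\Omega)$ identifies their values; and any two adjacent interface elements sharing an interface face $F$ have matching decompositions $F=(F\cap\Omega^-)\cup(F\cap\Omega^+)$, with the vanishing of $\jump{v}$ on each portion tying the $\Omega^-$-side constants of the two elements together and similarly for the $\Omega^+$-side. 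Combining these two mechanisms with mesh connectivity, $v$ collapses to at most a single constant $c^+$ throughout the $\Omega^+$-part of the mesh and a single constant $c^-$ throughout the $\Omega^-$-part.

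The boundary condition $v|_{\partial\Omega}=0$ then immediately gives $c^+=0$, since $\Gamma$ lies interior to $\Omega$ and every mesh node on $\partial\Omega$ sits on the $\Omega^+$-side, where node-continuity propagates the zero value. The delicate step — and what I expect to be the main obstacle of the proof — is concluding $c^-=0$: neither the broken-gradient term nor the interface-face jump penalty by itself distinguishes $v\equiv 0$ from a function equal to a nonzero constant on the $\Omega^-$-parts and zero on the $\Omega^+$-parts, because within each interface element the interior discontinuity across $\Gamma$ itself is not directly penalized. To close this gap I would exploit the additional structure of $V_h(\Omega)$, namely the $H^1$-regularity on each subelement $T^\pm$ together with node-continuity at the eight vertices of each interface element (some of which lie in $\Omega^-$ and some in $\Omega^+$), handling the five configurations of Figure~\ref{fig:subfig_interf} case by case under the resolution hypotheses (\textbf{H1})--(\textbf{H4}) to force the two piecewise constants to coincide and thereby transmit the zero value from the boundary across the interface. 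This final connectivity-across-$\Gamma$ argument is where I anticipate the bulk of the technical effort.
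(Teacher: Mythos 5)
The first three axioms are routine and you handle them correctly; you have also correctly isolated the only nontrivial point, namely positive definiteness across the interface \emph{inside} an interface element. But the mechanism you propose for closing that step --- node continuity at the eight vertices of each interface element plus a case analysis of the five configurations of Figure \ref{fig:subfig_interf} --- cannot work. Every vertex of an interface element lies on exactly one side of $\Gamma$, so the node-continuity clause in \eqref{vh_space} only ever equates the $\Omega^-$-side constant of one element with the $\Omega^-$-side constant of a neighbour (and likewise for $\Omega^+$); it never couples $c^-$ to $c^+$. Nor does $H^1$-regularity of a constant on a subelement contribute anything. In fact, with the broken-gradient reading of the first term of \eqref{energy_norm} (which is forced, since IFE functions genuinely jump across $\Gamma\cap T$ and the functional must be finite on $S_h(T)$), the elementwise restriction of $v=c^-\chi_{\Omega^-}$ is a nonzero member of $V_h(\Omega)$ with $\tbar v\tbar=0$: it is piecewise constant, continuous at every mesh node, vanishes on $\partial\Omega$ because $\Gamma$ is interior, and has zero jump on every interface face since the two neighbouring elements agree on each of $F\cap\Omega^+$ and $F\cap\Omega^-$. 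So no amount of configuration-by-configuration effort will finish your argument at the level of $V_h(\Omega)$; the obstruction you flagged is an actual null vector of the functional on that space.

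For comparison, the paper's own proof is a two-line argument that silently steps over exactly this point: it asserts that $\|\nabla v\|=0$ makes $v$ ``a constant on each element'' --- one constant per interface element --- and then concludes from node continuity and the zero boundary trace. That assertion is precisely what needs justification, and it holds only on subspaces carrying extra structure: on $S_h^0(\Omega)$ one has $\phi_T^+=\mathcal{C}_T(\phi_T^-)$, and \eqref{C_rela_1} shows that a constant $p$ (for which $\nabla p(F)=0$) is a fixed point of $\mathcal{C}_T$, so a vanishing broken gradient does force a single constant per interface element; alternatively one may restrict to functions continuous across $\Gamma$. Since the coercivity result that relies on this lemma is stated only on $S_h(\Omega)$, the natural repair is to prove the norm property there. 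Your proposal deserves credit for exposing the difficulty that the paper's proof hides, but as written the decisive step is both missing and unprovable in the generality you (and the paper) claim.
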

\begin{proof}
Since $\tbar v \tbar^2=0$, we directly have $\|\nabla v\|=0$, and thus $v$ is a constant on each element. Due to the continuity at mesh nodes and zero trace on $\partial\Omega$, we know that $v$ must be zero on the whole domain.
\end{proof}

Now we show that the bilinear form $a_h(\cdot,\cdot)$ is both continuous and coercive under the energy norm $\tbar \cdot \tbar$. 
\begin{theorem}
\label{thm_bound}
There exists a constant $C$ such that
\begin{equation}
\label{thm_bound_0}
a_h(v,w) \le  C \tbar v \tbar \tbar w \tbar, ~~~~ \forall v,w \in V_h(\Omega).
\end{equation}
\end{theorem}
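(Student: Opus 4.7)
The plan is to prove the continuity by splitting $a_h(v,w)$ into its four defining pieces and bounding each piece by a product of contributions that already appear inside the energy norm $\tbar\cdot\tbar$. The definition of $\tbar\cdot\tbar$ in \eqref{energy_norm} has been engineered exactly so that Cauchy--Schwarz, combined with suitable weight splittings $h^{1/2}/\sigma^{1/2}\cdot \sigma^{1/2}/h^{1/2}$, directly yields the bound; no trace or inverse inequality is needed here (those will play their role in the coercivity proof).

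First I would handle the volume term. Writing $\beta\nabla v\cdot\nabla w=(\sqrt{\beta}\nabla v)\cdot(\sqrt{\beta}\nabla w)$ and applying Cauchy--Schwarz elementwise followed by Cauchy--Schwarz over $T\in\mathcal{T}_h$ gives
\begin{equation*}
\sum_{T\in\mathcal{T}_h}\int_T \beta\nabla v\cdot\nabla w\,dX\;\le\;\Bigl(\sum_T \|\sqrt{\beta}\nabla v\|_{L^2(T)}^2\Bigr)^{1/2}\Bigl(\sum_T \|\sqrt{\beta}\nabla w\|_{L^2(T)}^2\Bigr)^{1/2}\le \tbar v\tbar\,\tbar w\tbar.
\end{equation*}
Next, the penalty term is handled the same way: by Cauchy--Schwarz on each interface face $F\in\mathcal{F}_h^i$ and then over the faces,
\begin{equation*}
\sum_{F\in\mathcal{F}_h^i}\frac{\sigma}{h}\int_F \jump{v}\jump{w}\,ds\le \Bigl(\sum_F \sigma\|h^{-1/2}\jump{v}\|_{L^2(F)}^2\Bigr)^{1/2}\Bigl(\sum_F \sigma\|h^{-1/2}\jump{w}\|_{L^2(F)}^2\Bigr)^{1/2}\le \tbar v\tbar\,\tbar w\tbar.
\end{equation*}

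The two remaining terms contain products $\aver{\beta\nabla\,\cdot\,\mathbf{n}}\jump{\,\cdot\,}$. Here I would insert the factor $1=\frac{h^{1/2}}{\sigma^{1/2}}\cdot \frac{\sigma^{1/2}}{h^{1/2}}$, so that the average is paired with the weight $h^{1/2}/\sigma^{1/2}$ (matching the third summand of the energy norm) and the jump is paired with $\sigma^{1/2}/h^{1/2}$ (matching the second summand). Then Cauchy--Schwarz on each face and then over faces gives, for example,
\begin{equation*}
\sum_{F\in\mathcal{F}_h^i}\int_F \aver{\beta\nabla v\cdot\mathbf{n}}\jump{w}\,ds\le \Bigl(\sum_F \tfrac{1}{\sigma}\|h^{1/2}\aver{\beta\nabla v\cdot\mathbf{n}}\|_{L^2(F)}^2\Bigr)^{1/2}\Bigl(\sum_F \sigma\|h^{-1/2}\jump{w}\|_{L^2(F)}^2\Bigr)^{1/2}\le \tbar v\tbar\,\tbar w\tbar,
\end{equation*}
and the symmetrizing term $\epsilon\sum_F\int_F \aver{\beta\nabla w\cdot\mathbf{n}}\jump{v}\,ds$ is treated identically with $v$ and $w$ swapped, picking up a factor $|\epsilon|\le 1$.

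Summing the four bounds yields $a_h(v,w)\le C\,\tbar v\tbar\,\tbar w\tbar$ with $C$ independent of $h$, $\beta^\pm$, and the interface location. No real obstacle arises: the only point requiring care is that the two terms $\aver{\beta\nabla\,\cdot\,\mathbf{n}}$ and $\jump{\,\cdot\,}$ that appear in $a_h$ on interface faces are precisely the quantities controlled by $\tbar\cdot\tbar$, so the splitting of the $\sigma/h$ weight is forced on us by the form of \eqref{energy_norm}. The nontrivial inequalities developed in Section~4 (the trace bounds of Theorem \ref{thm_trace_inequa} and the jump bound of Theorem \ref{thm_trace_interface}) are not needed for this result; they will instead enter when controlling the third term of $\tbar\cdot\tbar$ in terms of the first, which is the coercivity step to follow.
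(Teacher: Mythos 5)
Your proof is correct and is exactly the argument the paper intends: the authors dispose of this theorem with the single line ``It directly follows from the H\"older's inequality,'' and your term-by-term Cauchy--Schwarz with the $h^{1/2}/\sigma^{1/2}$ weight splitting is the standard way to carry that out. Nothing further is needed.
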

\begin{proof}
It directly follows from the H\"older's inequality.
\end{proof}

\begin{theorem}
\label{thm_coer}
Assume $\sigma$ is large enough, then there holds
\begin{equation}
\label{thm_coer_eq0}
a_h(v,v) \ge  \frac{1}{4} \tbar v \tbar^2, ~~~~ \forall v\in S_h(\Omega). 
\end{equation}
\end{theorem}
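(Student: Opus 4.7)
The plan is to adapt the standard symmetric-interior-penalty coercivity argument to this setting, the one non-routine ingredient being the $\beta$-weighted trace inequality of Theorem \ref{thm_trace_inequa} (or its unified form in Remark \ref{rem_trace_inequa}) combined with the specific choice $\sigma = \tilde{\sigma}^0(\beta^+)^2/\beta^-$ which is designed precisely to cancel the coefficient jump that the trace inequality produces. First, I would set $u=v$ in \eqref{ppife_1}, which collapses the consistency and symmetry terms to a single cross term and gives
\[
a_h(v,v) = \sum_{T\in\mathcal{T}_h}\|\sqrt{\beta}\nabla v\|^2_{L^2(T)} + (\epsilon-1)\sum_{F\in\mathcal{F}^i_h}\int_F \aver{\beta\nabla v\cdot\mathbf{n}}\,\jump{v}\,ds + \sigma\sum_{F\in\mathcal{F}^i_h}\|h^{-1/2}\jump{v}\|^2_{L^2(F)}.
\]

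Next, I would apply Young's inequality with a free parameter $\gamma>0$ to the cross term, splitting it into a $\gamma/(2\sigma)$ multiple of $\|h^{1/2}\aver{\beta\nabla v\cdot\mathbf{n}}\|^2_{L^2(F)}$ plus a $(\epsilon-1)^2\sigma/(2\gamma)$ multiple of $\|h^{-1/2}\jump{v}\|^2_{L^2(F)}$. Summing over interface faces and then invoking Remark \ref{rem_trace_inequa} on each of the (at most two) elements sharing $F$, together with the identity $(\beta^+)^2/\beta^- = \sigma/\tilde{\sigma}^0$, I would obtain the crucial bound
\[
\frac{1}{\sigma}\sum_{F\in\mathcal{F}^i_h}\|h^{1/2}\aver{\beta\nabla v\cdot\mathbf{n}}\|^2_{L^2(F)} \leq \frac{C}{\tilde{\sigma}^0}\sum_{T\in\mathcal{T}_h}\|\sqrt{\beta}\nabla v\|^2_{L^2(T)},
\]
with $C$ independent of $h$ and $\beta^\pm$. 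Choosing $\gamma$ large enough that $1-(\epsilon-1)^2/(2\gamma) \geq 1/2$ and then $\tilde{\sigma}^0$ large enough that $\gamma C/(2\tilde{\sigma}^0) \leq 1/2$ yields
\[
a_h(v,v) \geq \tfrac{1}{2}\sum_{T\in\mathcal{T}_h}\|\sqrt{\beta}\nabla v\|^2_{L^2(T)} + \tfrac{\sigma}{2}\sum_{F\in\mathcal{F}^i_h}\|h^{-1/2}\jump{v}\|^2_{L^2(F)}.
\]

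Finally, to convert this two-term lower bound into a lower bound on the three-term energy norm, I would apply the same weighted-trace estimate once more to absorb the $\aver{\beta\nabla v\cdot\mathbf{n}}$-term of $\tbar v\tbar^2$ into the bulk gradient part; after possibly enlarging $\tilde{\sigma}^0$ so that $C/\tilde{\sigma}^0 \leq 1$, this yields $\tbar v\tbar^2 \leq 2\sum_T\|\sqrt{\beta}\nabla v\|^2_{L^2(T)} + \sigma\sum_F\|h^{-1/2}\jump{v}\|^2_{L^2(F)}$, and the target $a_h(v,v)\geq \tfrac14 \tbar v\tbar^2$ is immediate. The delicate step, and the reason the theorem is confined to $v\in S_h(\Omega)$ rather than the larger $V_h(\Omega)$, is the middle one: one must verify that the $\beta$-scaling in the trace inequality from Theorem \ref{thm_trace_inequa} matches the $(\beta^+)^2/\beta^-$ factor built into $\sigma$ exactly, so that the constant $C/\tilde{\sigma}^0$ is a dimensionless universal number that can be made as small as one wishes by taking $\tilde{\sigma}^0$ large. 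Everything else in the argument is routine DG-style bookkeeping using Hölder's and Young's inequalities.
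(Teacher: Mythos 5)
Your proposal is correct and follows essentially the same route as the paper: expand $a_h(v,v)$, bound the cross term via Young's inequality and the $\beta$-weighted trace inequality of Remark \ref{rem_trace_inequa}, exploit the scaling $\sigma=\tilde{\sigma}^0(\beta^+)^2/\beta^-$ to absorb the coefficient jump, and finally control the flux-average part of $\tbar\cdot\tbar^2$ by the bulk gradient term. The only cosmetic difference is that you carry a free Young parameter $\gamma$ where the paper fixes the split as $\tfrac{1}{12}$ plus $6$ and then sets $\sigma=12C^2(\beta^+)^2/\beta^-$ explicitly.
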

\begin{proof}
We first note that
\begin{equation}
\begin{split}
\label{thm_coer_eq1}
a_h(v,v) &= \sum_{T\in\mathcal{T}_h} \| \sqrt{\beta} \nabla v \|^2_{L^2(T)} + (\epsilon - 1) \sum_{F\in \mathcal{F}^i_h} \int_F \{ \beta \nabla v\cdot \mathbf{ n} \} [v] ds + \sum_{F\in \mathcal{F}^i_h} \frac{\sigma}{h}  \| [v] \|^2_{L^2(F)} ds.
\end{split}
\end{equation}
Then we only need to bound the second term in \eqref{thm_coer_eq1}. 
On each interface face $F$, we denote its two neighbor elements by $T^1_F$ and $T^2_F$. Then we apply \eqref{rem_trace_inequa_eq} to obtain
\begin{equation}
\label{thm_coer_eq4}
\| \aver{ \beta \nabla v\cdot \mathbf{ n} } \|_{L^2(F)}
\le  \frac{1}{2} \sum_{j=1,2} \| \beta \nabla v|_{T^i_F}\cdot\mathbf{ n} \|_{L^2(F)} 
\le  C\frac{\beta^+}{2\sqrt{\beta^-}} \sum_{j=1,2} h^{-1/2} \| \sqrt{\beta} \nabla v \|_{L^2(T^j_F)}.
\end{equation}
Using H\"older's inequality and Young's inequality, we have
\begin{equation}
\begin{split}
\label{thm_coer_eq5}
\verti{ (\epsilon - 1)   \int_F \aver{ \beta \nabla v\cdot \mathbf{ n} } \jump{v} ds}  
\le  & 2  \left( h^{1/2}  \| \aver{ \beta \nabla v\cdot \mathbf{ n} } \|_{L^2(F)} \right) \left( h^{-1/2} \| \jump{v} \|_{L^2(F)}  \right) \\
\le  &   \left(  \sum_{j=1,2} \| \sqrt{\beta} \nabla v \|_{L^2(T^j_F)}   \right)
 \left( h^{-1/2} C\frac{\beta^+}{\sqrt{\beta^-}} \| \jump{v} \|_{L^2(F)}   \right) \\
 \le  & \frac{1}{12}\left(  \sum_{j=1,2} \| \sqrt{\beta} \nabla v \|^2_{L^2(T^j_F)}   \right) + 6\left( C^2
\frac{(\beta^+)^2}{\beta^-h} \| \jump{v} \|^2_{L^2(F)}  \right).
\end{split}
\end{equation}
Summing \eqref{thm_coer_eq5} over all the interface faces, we have
\begin{equation}
\label{thm_coer_eq6}
\verti{ (\epsilon - 1) \sum_{F\in \mathcal{F}^i_h} \int_F \aver{ \beta \nabla v\cdot \mathbf{ n} } \jump{v} ds }
\le  \frac{1}{2}\sum_{T\in\mathcal{T}_h} \| \sqrt{\beta} \nabla v \|^2_{L^2(T)}  + 6C^2
\frac{(\beta^+)^2/\beta^-}{h} \sum_{F\in \mathcal{F}^i_h} \| \jump{v} \|^2_{L^2(F)} .
\end{equation}
Similarly, using \eqref{rem_trace_inequa_eq} again, we have
\begin{equation}
\label{thm_coer_eq7}
\sum_{F\in \mathcal{F}^i_h}\| h^{1/2} \aver{\beta\nabla v\cdot\mathbf{ n} } \|^2_{L^2(F)} \le  3\sum_{T\in\mathcal{T}_h} C^2 \frac{(\beta^+)^2}{\beta^-}  \| \sqrt{\beta} \nabla v \|^2_{L^2(T)}.
\end{equation}
Taking $\sigma = 12C^2\frac{(\beta^+)^2}{\beta^-}$ and putting \eqref{thm_coer_eq6} and \eqref{thm_coer_eq7} into \eqref{thm_coer_eq1}, we have
\begin{equation}
\begin{split}
\label{thm_coer_eq8}
a_h(v,v) \ge  & \sum_{T\in\mathcal{T}_h} \left( 1-\frac{1}{2} - \frac{1}{4} \right) \sum_{T\in\mathcal{T}_h}  \| \sqrt{\beta} \nabla v \|^2_{L^2(T)} + \left( \sigma -  6C^2 
\frac{(\beta^+)^2}{\beta^-} \right) h^{-1} \sum_{F\in \mathcal{F}_h^i} \| \jump{v} \|^2_{L^2(F)} \\
 &+ \sum_{F\in \mathcal{F}^i_h } (\sigma)^{-1} \| h^{1/2} \{\beta\nabla v\cdot\mathbf{ n} \} \|^2_{L^2(F)}  \ge  \frac{1}{4} \tbar v \tbar^2.
 \end{split}
\end{equation}
\end{proof}

Let $u^s_E\in H^2_0(\Omega)$ be the Sobolev extension of $u^s=u|_{\Omega^s}$ from $\Omega^s$ to $\Omega$, $s=\pm$. According to the boundedness of Sobolev extensions Theorem 7.25 in \cite{2001GilbargTrudinger} and Poincar\'e inequality, there holds
\begin{equation}
\label{soblev_ext}
|u^s_E|_{H^1(\Omega)} + |u^s_E|_{H^2(\Omega)} \le  C_E( |u^s|_{H^1(\Omega^s)} + |u^s|_{H^2(\Omega^s)} ),~~~~ s=\pm,
\end{equation}
for some constant $C_E$ only depending on $\Omega^{\pm}$. Now we recall the nodal interpolation $\mathcal{I}_h$ for IFE functions from \cite{2020GuoLin}:
\begin{equation}
\label{IFE_interp}
\mathcal{I}_h : H^2(\Omega^+\cup \Omega^-) \rightarrow S_h(\Omega), ~~~ \mathcal{I}_hu(X) = u(X), ~ \forall X\in \mathcal{N}_h.
\end{equation}
According to Theorem 4.3 from \cite{2020GuoLin}, if $u$ satisfies the jump conditions, $\mathcal{I}_hu-u$ has the optimal convergence rate with respect to the mesh size $h$ on each patch $\omega_T$ defined in \eqref{patch} of an interface element $T$, namely,
\begin{equation}
\label{IFE_interp_error}
  | u - \mathcal{I}_hu |_{H^j(\omega_T)} \le  C \frac{\beta^+}{\beta^-} h^{2-k} \sum_{j=1,2}( |u^+_E|_{H^j(\omega_T)} +  |u^-_E|_{H^j(\omega_T)} ), ~ k=0,1,2.
\end{equation}
We can further use this result to estimate the interpolation errors in terms of the energy norm.
\begin{lemma}
\label{lem_interp_enrg_error}
Assume that the mesh $\mathcal{T}_h$ is fine enough, then we have
\begin{equation}
\label{lem_interp_enrg_error_eq0}
\tbar u - \mathcal{I}_hu \tbar \le  C \frac{(\beta^+)^2}{(\beta^-)^{3/2}} h \sum_{j=1,2}( |u^+|_{H^j(\Omega^+)} +  |u^-|_{H^j(\Omega^-)} ).
\end{equation}
\end{lemma}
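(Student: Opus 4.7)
The plan is to split the energy semi-norm $\tbar e\tbar^2$ with $e=u-\mathcal{I}_hu$ into its three defining pieces and bound each separately, then combine and take a square root. All three estimates will be driven by the patch-wise interpolation estimate \eqref{IFE_interp_error} (combined with the standard trilinear interpolation bound on non-interface elements), and the bookkeeping of $\beta^\pm$ is arranged so that the stabilization weight $\sigma \simeq (\beta^+)^2/\beta^-$ and the trace-type constants from Section~4 interact as predicted.

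\textbf{Step 1 (element gradient term).} On every interface element I would apply \eqref{IFE_interp_error} with $k=1$ on the patch $\omega_T$, and on non-interface elements the classical $\mathbb{Q}_1$ interpolation estimate. Using $\sqrt{\beta}\le \sqrt{\beta^+}$ and summing over $\mathcal{T}_h$ (invoking the finite overlap of the patches $\omega_T$, together with the Sobolev extension bound \eqref{soblev_ext}) gives
\[
\sum_{T\in\mathcal{T}_h}\|\sqrt{\beta}\nabla e\|_{L^2(T)}^2 \;\le\; C\,\beta^+\Bigl(\frac{\beta^+}{\beta^-}\Bigr)^{2} h^{2} \sum_{s=\pm}\sum_{j=1,2}|u^s|_{H^j(\Omega^s)}^{2},
\]
and $\sqrt{\beta^+}\,\beta^+/\beta^- \le (\beta^+)^2/(\beta^-)^{3/2}$ (since $\beta^+\ge\beta^-$) already matches the target prefactor.

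\textbf{Step 2 (face-jump term).} For an interface face $F=\overline{T_1}\cap\overline{T_2}$, the key observation is that $u$ is globally continuous, so $\jump{u}|_F=0$ and therefore $\jump{e}|_F = (u-\mathcal{I}_hu)|_{T_1}|_F - (u-\mathcal{I}_hu)|_{T_2}|_F$. On each neighbor $T_i$ I would apply the standard $L^2(F)\!\hookleftarrow\!H^1(T_i)$ trace inequality piecewise on the subelements $T_i^\pm$, where on each subelement $e$ is the difference of the Sobolev extension $u^{\pm}_E$ and a $\mathbb{Q}_1$ polynomial and is therefore smooth; the norm equivalence of Lemma~\ref{norm_equiv} lets one replace sub-element norms by full-element norms without dependence on the interface location. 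The resulting bound $\|\jump{e}\|_{L^2(F)}^2 \le C\sum_i\bigl(h^{-1}\|e\|_{L^2(T_i)}^2 + h\|\nabla e\|_{L^2(T_i)}^2\bigr)$, combined with \eqref{IFE_interp_error} for $k=0$ and $k=1$ and the weight $\sigma h^{-1}\simeq (\beta^+)^2/(\beta^- h)$, delivers exactly the $(\beta^+)^4/(\beta^-)^3\,h^2$ contribution.

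\textbf{Step 3 (flux-average term).} This is the technically hardest piece and is the main obstacle. One again decomposes $\aver{\beta\nabla e\cdot\mathbf{n}}$ along the two neighbors and on each neighbor splits by $T_i^\pm$. On each sub-element one works with the $H^2$ function $\tilde e^s:=u^s_E-P_i^s$, where $P_i^s\in\mathbb{Q}_1$ is the trilinear piece of $\mathcal{I}_hu$ on $T_i^s$ extended to all of $T_i$; Lemma~\ref{norm_equiv} and the inverse inequality of Theorem~\ref{thm_inver_inequa} (whose proof relies only on Lemmas~\ref{lem_C_stab} and~\ref{lem_C_stab2}) let one control the full-element $H^1$ and $H^2$ norms of $P_i^s$ by the sub-element norms, independently of the interface location. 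Apply the standard trace inequality $\|\nabla \tilde e^s\|_{L^2(F)}^2\le C(h^{-1}\|\nabla \tilde e^s\|_{L^2(T_i)}^2+h|\tilde e^s|_{H^2(T_i)}^2)$ and use \eqref{IFE_interp_error} for $k=1$ and $k=2$ together with \eqref{soblev_ext}. The extra factor $\beta^+$ outside the trace is compensated by the weight $\sigma^{-1}\simeq \beta^-/(\beta^+)^2$, leaving a clean $(\beta^+)^4/(\beta^-)^3\,h^2$ bound of the correct form; one must be careful not to lose an $h$ when passing from sub-element norms to patch norms, which is exactly where Lemma~\ref{norm_equiv} prevents interface-location-dependent blow-up.

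Finally, summing the three estimates over $\mathcal{T}_h$ and $\mathcal{F}_h^i$ (finite overlap), taking square roots, and absorbing $|u^\pm_E|_{H^j(\Omega)}$ into $|u^\pm|_{H^j(\Omega^\pm)}$ via \eqref{soblev_ext} yields \eqref{lem_interp_enrg_error_eq0}. The hard part, as indicated, is Step 3: proving that the polynomial-extension trick gives $L^2(F)$ bounds on $\nabla e$ with constants independent of how $\Gamma$ slices the interface element — which is precisely what the norm-equivalence and stability results of Section~4 were designed to make possible.
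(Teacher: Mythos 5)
Your proposal follows essentially the same route as the paper's proof: split the energy norm into its three pieces, bound each via the patch interpolation estimate \eqref{IFE_interp_error} combined with standard trace inequalities on the neighboring elements, track the $\beta^\pm$ powers through the weights $\sigma$ and $\sigma^{-1}$, and conclude by finite overlap of the patches and the Sobolev extension bound \eqref{soblev_ext}. The paper states the three bounds \eqref{lem_interp_enrg_error_eq1}--\eqref{lem_interp_enrg_error_eq3} tersely, whereas you additionally spell out how the trace inequality is applied to the piecewise-smooth error on interface elements (via the Sobolev extensions and the polynomial-extension/norm-equivalence machinery of Section~4); this is a legitimate filling-in of detail the paper leaves implicit, not a different method.
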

\begin{proof}
First of all, \eqref{IFE_interp_error} and the standard estimate of the Lagrange interpolation for finite element functions give
\begin{equation}
\label{lem_interp_enrg_error_eq1}
 \| \sqrt{\beta} \nabla (u - \mathcal{I}_hu)\|_{L^2(T)}  \le  C \frac{(\beta^+)^{3/2}}{\beta^-} h \sum_{s=\pm}( |u^s_E|_{H^1(\omega_T)} +  |u^s_E|_{H^2(\omega_T)} ).
\end{equation}
For the second term in \eqref{energy_norm}, for each $F\in\mathcal{F}^i_h$, we denote $T^1_F$ and $T^2_F$ as the two neighbor elements. Using the trace inequality and \eqref{IFE_interp_error}, we have
\begin{equation}
\begin{split}
\label{lem_interp_enrg_error_eq2}
\sqrt{\sigma} \| h^{-1/2} \jump{u - \mathcal{I}_hu} \|_{L^2(F)} & \le  C \frac{\beta^+}{(\beta^-)^{1/2}} \sum_{r=1,2} ( h^{-1}\| u - \mathcal{I}_hu \|_{L^2(T^r_F)} + | u - \mathcal{I}_hu |_{H^1(T^r_F)} ) \\
& \le  C \frac{(\beta^+)^2}{(\beta^-)^{3/2}} h \sum_{r=1,2} \sum_{s=\pm} ( | u^s_E  |_{H^1(\omega_{T^r_F})} + | u^s_E  |_{H^2(\omega_{T^r_F})}).
\end{split}
\end{equation}
For the third term in \eqref{energy_norm}, by similar derivation we have
\begin{equation}
\label{lem_interp_enrg_error_eq3}
\frac{1}{\sqrt{\sigma}} \| h^{1/2} \{\beta\nabla (u - \mathcal{I}_hu)\cdot\mathbf{ n} \} \|_{L^2(F)} \le  C \frac{\beta^+}{(\beta^-)^{1/2}} h \sum_{r=1,2} \sum_{s=\pm} ( | u^s_E  |_{H^1(\omega_{T^r_F})} + | u^s_E  |_{H^2(\omega_{T^r_F})} ).
\end{equation}
Summing the estimates above over all the elements and interface faces, using the finite overlapping of the patches $\omega_T$, $T\in\mathcal{T}^i_h$, and applying the boundedness \eqref{soblev_ext}, we have the desired result. 
\end{proof}

Now we are ready to present the error estimates of IFE solutions in terms of the energy norm and $L^2$ norm. We note that the key difficulty for these two estimates is the treatment of the non-consistence of the IFE scheme.

\begin{theorem}
\label{thm_error_bound}
Assume that the mesh $\mathcal{T}_h$ is fine enough, and assume that $\sigma$ is large enough such that Theorem \ref{thm_coer} holds, then there holds
\begin{equation}
\label{thm_error_bound_eq0}
\tbar u - u_h \tbar \le  Ch \frac{(\beta^+)^2}{(\beta^-)^{5/2}} \sum_{j=1,2}\left( | \beta^- u^-|_{H^j(\Omega^-)} + |\beta^+u^+|_{H^j(\Omega^+)} \right).
\end{equation}
\end{theorem}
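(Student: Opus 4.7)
My plan is a Strang-type argument. I would split the error as $u - u_h = \eta - \xi$ with $\eta := u - \mathcal{I}_h u$ and $\xi := u_h - \mathcal{I}_h u \in S_h^0(\Omega)$; the interpolation part $\tbar \eta \tbar$ is already bounded by Lemma \ref{lem_interp_enrg_error} together with \eqref{soblev_ext}, and the remaining work goes into $\tbar \xi \tbar$ via the coercivity of Theorem \ref{thm_coer}:
\begin{equation*}
\tfrac{1}{4}\tbar \xi \tbar^2 \le a_h(\xi,\xi) = a_h(\eta,\xi) - a_h(u-u_h,\xi).
\end{equation*}
The first summand on the right will be controlled by the continuity of $a_h$ (Theorem \ref{thm_bound}) combined with Lemma \ref{lem_interp_enrg_error}; the second summand encodes the non-consistency of the scheme and is where the real work lies.

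To expose the non-consistency I would test the strong equation \eqref{inter_PDE} against an arbitrary $v_h \in S_h^0(\Omega)$, integrating by parts element-by-element and, on each $T \in \mathcal{T}_h^i$, subregion-by-subregion on $T^\pm$. Using $u \in PH^2(\Omega)$, the flux continuity \eqref{jump_cond_2} on $\Gamma$, the single-valuedness of $\beta\nabla u \cdot \mathbf{n}_F$ on every mesh face, and the observation that under (\textbf{H1})--(\textbf{H4}) no non-interface face is cut by $\Gamma$ so that $\jump{v_h}$ vanishes on such a face (both one-sided traces of $v_h$ are bilinear functions determined by identical nodal values), the mesh-face, symmetrization and penalty terms in $a_h(u,v_h)$ collapse, leaving only the residual supported on the interface:
\begin{equation*}
a_h(u-u_h, v_h) = -\sum_{T \in \mathcal{T}_h^i}\int_{\Gamma\cap T}\aver{\beta\nabla u \cdot \mathbf{n}}\jump{v_h}_\Gamma\,ds.
\end{equation*}

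Next I would estimate this residual with $v_h = \xi$ by two trace bounds on $\Gamma \cap T$. For $\jump{\xi}$ I invoke Theorem \ref{thm_trace_interface}, which delivers the crucial $h^{3/2}$-factor. For $\aver{\beta\nabla u \cdot \mathbf{n}}$ I use the Sobolev extensions $u^\pm_E \in H^2(\Omega)$ from \eqref{soblev_ext} and a standard trace from $T$ onto the smooth surface $\Gamma \cap T$ (whose area is $O(h^2)$ by Lemma \ref{lem_interf_area}), of the form $\|\beta^s \nabla u^s\|_{L^2(\Gamma\cap T)} \le C\beta^s(h^{-1/2}\|\nabla u^s_E\|_{L^2(T)} + h^{1/2}|u^s_E|_{H^2(T)})$. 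A Cauchy--Schwarz summation over interface elements, together with the finite overlap of the patches $\omega_T$ and \eqref{soblev_ext}, then produces a bound of the form $Ch\,\frac{(\beta^+)^2}{(\beta^-)^{5/2}}\sum_j\bigl(|\beta^- u^-|_{H^j(\Omega^-)}+|\beta^+ u^+|_{H^j(\Omega^+)}\bigr)\,\tbar \xi \tbar$ for the residual. A Young's inequality absorbs $\tbar \xi \tbar$ into the left-hand side of the coercivity estimate, and a final triangle inequality combined with Lemma \ref{lem_interp_enrg_error} yields \eqref{thm_error_bound_eq0}.

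The main obstacle is controlling the interface residual, because the PPIFE scheme carries no stabilization on $\Gamma$. The only reason the optimal $O(h)$ order survives is the super-approximation property of IFE jumps across $\Gamma$ delivered by Theorem \ref{thm_trace_interface}: its $h^{3/2}$-factor is exactly what offsets the $h^{-1/2}$ deficit of the standard trace of $\beta\nabla u$ on $\Gamma \cap T$, so that the net contribution is still $O(h)$. Any coarser bound on $\|\jump{\xi}\|_{L^2(\Gamma \cap T)}$ would lose at least half an order and \eqref{thm_error_bound_eq0} would fail.
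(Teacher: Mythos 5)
Your proposal is correct and follows essentially the same route as the paper: the same splitting via $\mathcal{I}_h u$, the same identification of the non-consistency as the residual $\sum_{T\in\mathcal{T}_h^i}\int_{\Gamma\cap T}\beta\nabla u^-\cdot\mathbf{n}\,\jump{v_h}\,ds$ obtained by elementwise integration by parts, and the same pairing of Theorem \ref{thm_trace_interface} (the $h^{3/2}$ jump bound) with the $h^{-1/2}$ trace of $\beta\nabla u^-\cdot\mathbf{n}$ via Sobolev extensions, followed by coercivity, continuity, and Lemma \ref{lem_interp_enrg_error}. The only cosmetic differences are that the paper replaces $\aver{\beta\nabla u\cdot\mathbf{n}}$ by the single-valued $\beta^-\nabla u^-\cdot\mathbf{n}$ using \eqref{jump_cond_2} before estimating, and divides through by $\tbar u_h-\mathcal{I}_h u\tbar$ rather than invoking Young's inequality.
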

\begin{proof}
We note that the exact solution $u$ may not exactly satisfy the PPIFE scheme \eqref{ppife} since the IFE functions on interface elements may not be continuous across the interface. In fact, testing \eqref{model} with any $v\in V_h(\Omega)$ and using integration by parts, we can write
\begin{equation}
\begin{split}
\label{thm_error_bound_eq1}
 &\sum_{T\in\mathcal{T}_h} \int_T \beta \nabla u \cdot \nabla v dX 
 - \sum_{F\in \mathcal{F}^i_h} \int_F \aver{ \beta \nabla u\cdot \mathbf{ n} } \jump{v} ds  
 + \epsilon \sum_{F\in \mathcal{F}^i_h} \int_F \aver{ \beta \nabla v\cdot \mathbf{ n} } \jump{u} ds \\
 &+ \sum_{F\in \mathcal{F}^i_h} \frac{\sigma }{h} \int_F \jump{u}\jump{v} ds 
 -  \sum_{T\in\mathcal{T}^i_h} \int_{T\cap \Gamma}  \beta \nabla u\cdot \mathbf{ n} \jump{v} ds = \int_{\Omega} fv dX.
 \end{split}
\end{equation}
Due to the flux jump condition of $u$, we focus on $\beta \nabla u\cdot \mathbf{ n}=\beta \nabla u^-\cdot \mathbf{ n}$ on $\Gamma$. We define the following bilinear form
\begin{equation}
\label{thm_error_bound_eq2}
b_h(w,v) = \sum_{T\in\mathcal{T}^i_h} \int_{T\cap \Gamma}  \beta \nabla w^-\cdot \mathbf{ n} \jump{v} ds, ~~~ \forall v,w \in V_h(\Omega).
\end{equation}
Combining \eqref{thm_error_bound_eq1} and the IFE scheme, we obtain
\begin{equation}
\label{thm_error_bound_eq3}
a_h(u,v) - b_h(u,v) = a_h(u_h,v).
\end{equation}
We need to estimate the bound of $b_h(u,v)$ for each $v\in S_h(\Omega)$. Using Theorem \ref{thm_trace_interface} and the trace inequality (Lemma 3.2 in \cite{2016WangXiaoXu}), we have
\begin{equation}
\begin{split}
\label{thm_error_bound_eq4}
|b_h(u,v)| &\le  \sum_{T\in\mathcal{T}^i_h} \| \beta^- \nabla u^-\cdot \mathbf{ n} \|_{L^2(\Gamma\cap T)} \| [v] \|_{L^2(\Gamma\cap T)} \\
& \le  C \frac{\sqrt{\beta^+}}{\beta^-} \sum_{T\in\mathcal{T}^i_h} ( h^{-1/2}|\beta^-u^-_E|_{H^1(T)} +  h^{1/2}|\beta^-u^-_E|_{H^2(T)} ) h^{3/2} \| \sqrt{\beta} \nabla v \|_{L^2(T)} \\
& \le  C \frac{\sqrt{\beta^+}}{\beta^-} h ( | \beta^- u^-_E |_{H^1(\Omega)}  + | \beta^- u^-_E |_{H^2(\Omega)}  ) \tbar  v \tbar.
\end{split}
\end{equation}
Now we consider the Lagrange interpolation operator $\mathcal{I}_h$ and write
\begin{equation}
\label{thm_error_bound_eq5}
a_h(u_h - \mathcal{I}_h u,v) = a_h(u- \mathcal{I}_hu,v) - b_h(u,v).
\end{equation}
Taking $v_h = u_h - \mathcal{I}_h u\in S_h(\Omega)$ and applying coercivity in Theorem \ref{thm_coer}, the boundedness in Theorem \ref{thm_bound} as well as \eqref{thm_error_bound_eq4}, we arrive at
\begin{equation}
\label{thm_error_bound_eq6}
\tbar u_h - \mathcal{I}_h u \tbar^2 \le  C \tbar u_h - \mathcal{I}_h u \tbar\tbar u - \mathcal{I}_h u \tbar + C \frac{\sqrt{\beta^+}}{\beta^-} h ( | \beta^-u^-_E |_{H^1(\Omega)}  + | \beta^-u^-_E |_{H^2(\Omega)}  ) \tbar  u_h - \mathcal{I}_h u \tbar.
\end{equation}
By the optimal approximation of $\mathcal{I}_hu$ given in Lemma \ref{lem_interp_enrg_error}, \eqref{thm_error_bound_eq6} yields
\begin{equation}
\label{thm_error_bound_eq7}
\tbar u_h - \mathcal{I}_h u\tbar \le  C \frac{(\beta^+)^3}{(\beta^-)^{3/2}} h \sum_{j=1,2}( | u^-_E |_{H^j(\Omega)}  + | u^+_E |_{H^j(\Omega)} ) + C \frac{\sqrt{\beta^+}}{\beta^-} h ( | \beta^-u^-_E |_{H^1(\Omega)}  + | \beta^-u^-_E |_{H^2(\Omega)}  ).
\end{equation}
Clearly, the triangle inequality together with \eqref{thm_error_bound_eq7} and \eqref{soblev_ext} yields the desired result.
\end{proof}

\begin{remark}
\label{rem_regularity}
The regularity of elliptic interface problems \cite{2010ChuGrahamHou,2002HuangZou} gives that
\begin{equation}
\label{rem_regularity_eq1}
\sum_{j=1,2}\left( | \beta^- u^-|_{H^j(\Omega^-)} + |\beta^+u^+|_{H^j(\Omega^+)} \right) \le  C_{reg} \| f \|_{L^2(\Omega)}.
\end{equation}
where this constant $C_{reg}$ only depends on the $\Omega^{\pm}$. So Theorem \ref{thm_error_bound} actually yields
\begin{equation}
\label{rem_regularity_eq2}
\tbar u - u_h \tbar \le  Ch \frac{(\beta^+)^2}{(\beta^-)^{5/2}} \| f \|_{L^2(\Omega)}.
\end{equation}
\end{remark}

\begin{theorem}
\label{thm_l2_error_bound}
Under the conditions of Theorem \ref{thm_error_bound}, there holds
\begin{equation}
\label{thm_l2_error_bound_eq0}
\| u - u_h \|_{L^2(\Omega)} \le  Ch^2 \frac{(\beta^+)^4}{(\beta^-)^{5}} \| f \|_{L^2(\Omega)}.
\end{equation}
\end{theorem}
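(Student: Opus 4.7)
The plan is to invoke a duality (Aubin--Nitsche) argument tailored to the IFE setting. I would introduce the dual interface problem: find $w\in PH^2(\Omega)$ with $-\nabla\cdot(\beta\nabla w)=u-u_h$ in $\Omega^\pm$, $\jump{w}_{\Gamma}=\jump{\beta\nabla w\cdot\mathbf{n}}_{\Gamma}=0$, and $w|_{\partial\Omega}=0$. Remark \ref{rem_regularity} (applied to the dual source $u-u_h\in L^2(\Omega)$) yields the elliptic regularity estimate
\[
\sum_{j=1,2}\bigl(|\beta^+ w^+|_{H^j(\Omega^+)}+|\beta^- w^-|_{H^j(\Omega^-)}\bigr)\le C_{\mathrm{reg}}\|u-u_h\|_{L^2(\Omega)}.
\]
Testing the dual PDE against $u-u_h$ and integrating by parts elementwise--using that $w$ is globally $H^1$ with no flux jump across $\Gamma$, while $u_h$ does jump across $\Gamma\cap T$--gives the identity
\[
\|u-u_h\|^2_{L^2(\Omega)}=a_h(w,u-u_h)-b_h(w,u-u_h),
\]
where $b_h$ is the inconsistency form from \eqref{thm_error_bound_eq2}.

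Next I would exploit the Galerkin-orthogonality-like relation obtained by subtracting the scheme \eqref{ppife} from \eqref{thm_error_bound_eq3}: $a_h(u-u_h,v_h)=b_h(u,v_h)$ for every $v_h\in S_h^0(\Omega)$. Choosing $v_h=\mathcal{I}_h w$ (and, if $\epsilon\neq -1$, using the formal adjoint of $a_h$ with $-\epsilon$ for the dual problem so the symmetry needed below is built in), I obtain the decomposition
\[
\|u-u_h\|^2_{L^2(\Omega)} = a_h(u-u_h,w-\mathcal{I}_h w)+b_h(u,\mathcal{I}_h w)-b_h(w,u-u_h).
\]
Each of the three pieces must now be shown to be $O(h^{2})$ times $\|u-u_h\|_{L^2(\Omega)}\|f\|_{L^2(\Omega)}$ after division by $\|u-u_h\|_{L^2(\Omega)}$.

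The first piece is handled by the boundedness Theorem \ref{thm_bound} together with Lemma \ref{lem_interp_enrg_error} applied to both $u$ (controlled via $\tbar u-u_h\tbar\le Ch\|f\|_{L^2}$ from Theorem \ref{thm_error_bound}) and $w$ (controlled via the dual regularity): it produces a factor $h\cdot h$. For the two inconsistency terms, the crucial ingredient is Theorem \ref{thm_trace_interface}: the IFE jump $\jump{\mathcal{I}_h w}_{\Gamma\cap T}$ (respectively $\jump{u-u_h}_{\Gamma\cap T}=-\jump{u_h}_{\Gamma\cap T}$) is bounded by $Ch^{3/2}\|\sqrt{\beta}\nabla\mathcal{I}_h w\|_{L^2(T)}$ (respectively by $Ch^{3/2}\|\sqrt{\beta}\nabla u_h\|_{L^2(T)}$) with the $\beta$-weighting from \eqref{thm_trace_interface_eq0}. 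Combining this $h^{3/2}$ with the standard piecewise trace inequality $\|\beta\nabla u^{-}\!\cdot\mathbf{n}\|_{L^2(\Gamma\cap T)}\le C(h^{-1/2}|\beta u^{-}|_{H^1(T)}+h^{1/2}|\beta u^{-}|_{H^2(T)})$ as used already in \eqref{thm_error_bound_eq4} (and its analogue for $w$), summing over interface elements, and invoking Remark \ref{rem_regularity} and the dual regularity yields the required $h^{2}$ scaling.

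The main obstacle is the careful bookkeeping of the $\beta^\pm$ exponents, since each use of the extension-operator stability (Lemmas \ref{lem_C_stab}--\ref{lem_C_stab2}), the trace inequality \eqref{rem_trace_inequa_eq}, and the interface-jump bound \eqref{thm_trace_interface_eq0} contributes a factor of $\beta^+/\beta^-$ or $\sqrt{\beta^+}/\beta^-$, and these accumulate through both the primal and dual estimates. The claimed ratio $(\beta^+)^4/(\beta^-)^5$ arises precisely from compounding the energy-norm factor $(\beta^+)^2/(\beta^-)^{5/2}$ from Theorem \ref{thm_error_bound} (applied once to $u-u_h$ and once, through the dual regularity, to $w$). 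Once each of the three pieces is bounded by $Ch^{2}\tfrac{(\beta^+)^4}{(\beta^-)^5}\|f\|_{L^2(\Omega)}\|u-u_h\|_{L^2(\Omega)}$, cancelling one factor of $\|u-u_h\|_{L^2(\Omega)}$ gives \eqref{thm_l2_error_bound_eq0}.
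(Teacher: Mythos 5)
Your overall duality framework is the same as the paper's: introduce the dual problem with source $u-u_h$, use the consistency identity \eqref{thm_error_bound_eq3} to shift to $\mathcal{I}_h$ of the dual solution, and bound three pieces. Your first piece, $a_h(u-u_h,w-\mathcal{I}_h w)$, is handled correctly via Theorem \ref{thm_bound}, Lemma \ref{lem_interp_enrg_error}, and the energy estimate, and does give $O(h^2)$.

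The gap is in the two consistency terms. You propose to bound $\jump{\mathcal{I}_h w}$ on $\Gamma\cap T$ by Theorem \ref{thm_trace_interface}, i.e.\ by $Ch^{3/2}\|\sqrt{\beta}\nabla \mathcal{I}_h w\|_{L^2(T)}$, and to pair this with the elementwise trace bound $\|\beta\nabla u^-\cdot\mathbf{n}\|_{L^2(\Gamma\cap T)}\le C\left(h^{-1/2}|\cdot|_{H^1(T)}+h^{1/2}|\cdot|_{H^2(T)}\right)$. The product of the leading factors is $h^{-1/2}\cdot h^{3/2}=h$, not $h^2$; after summing you recover exactly the first-order bound \eqref{thm_error_bound_eq4} already used in the energy estimate, which is one power of $h$ short. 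The same arithmetic defeats your bound for $b_h(w,u-u_h)$ via $\jump{u_h}$. The missing ingredient is Theorem \ref{thm_interface_interface}: since the jump of an IFE function vanishes identically on the approximating plane $\tau_T$ and $\Gamma\cap T$ lies within $O(h^2)$ of $\tau_T$, one has $\|\jump{\phi_T}\|_{L^2(\Gamma\cap T)}\le Ch^2|\jump{\phi_T}|_{H^1(\Gamma\cap T)}$. The paper applies this to $\jump{\mathcal{I}_h z}$, splits $|\jump{\mathcal{I}_h z}|_{H^1(\Gamma)}\le |\jump{\mathcal{I}_h z-z}|_{H^1(\Gamma)}+|\jump{z}|_{H^1(\Gamma)}$, which is $O(1)\cdot\|u-u_h\|_{L^2(\Omega)}$ by \eqref{thm_l2_error_bound_eq5}--\eqref{thm_l2_error_bound_eq6}, and uses the global trace inequality for $\beta^-\nabla u^-\cdot\mathbf{n}$ on $\Gamma$ (carrying no negative power of $h$), so the full factor $h^2$ survives. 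For the other consistency term the paper sidesteps the difficulty entirely by rewriting $b_h(z,u-u_h)=a_h(z-z_h,u-u_h)$ via the identity \eqref{thm_error_bound_eq3} applied to the dual problem, with $z_h$ the IFE solution of that problem, and then bounding it by $C\tbar z-z_h\tbar\,\tbar u-u_h\tbar=O(h)\cdot O(h)$. Without one of these two devices your argument stalls at $O(h)$ and cannot deliver \eqref{thm_l2_error_bound_eq0}.
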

\begin{proof}
We use the duality argument. Define an auxiliary function $z\in H^2(\Omega)$ to the interface problem \eqref{model} with the right hand side $f$ replaced by $u-u_h\in L^2(\Omega)$. Again, we consider the Lagrange interpolation operator $\mathcal{I}_h$ for IFE functions \eqref{IFE_interp}. Testing this auxiliary equation with $u - u_h$ and using the similar derivation as \eqref{thm_error_bound_eq1}, we have
\begin{equation}
\begin{split}
\label{thm_l2_error_bound_eq1}
\| u - u_h \|_{L^2(\Omega)}^2 & = a_h(z, u -u_h ) - b_h(z,u - u_h) \\
& = a_h(z - \mathcal{I}_h z, u -u_h ) + b_h(u, \mathcal{I}_hz) - a_h(z-z_h,u-u_h)
\end{split}
\end{equation}
where in the second equality we have used the identity \eqref{thm_error_bound_eq3} again. Lemma \ref{lem_interp_enrg_error} for $z$ and \eqref{rem_regularity_eq2} show
\begin{equation}
\begin{split}
\label{thm_l2_error_bound_eq2}
& a_h(z - \mathcal{I}_h z, u -u_h )  \le  C\tbar z-  \mathcal{I}_hz \tbar \tbar  u -u_h \tbar \\
 \le & C \frac{(\beta^+)^4}{(\beta^-)^{5}} h^2 \sum_{j=1,2}( |\beta^+ z^+|_{H^j(\Omega^+)} +  |\beta^-z^-|_{H^j(\Omega^-)} )  \| f \|_{L^2(\Omega)} 
 \le  C \frac{(\beta^+)^4}{(\beta^-)^{5}} h^2 \| u - u_h \|_{L^2(\Omega)} \| f \|_{L^2(\Omega)} .
\end{split}
\end{equation}
Similarly, for the third term on the right side of \eqref{thm_l2_error_bound_eq1}, we have
\begin{equation}
\label{thm_l2_error_bound_eq3}
a_h(z-z_h,u-u_h) \le  C \frac{(\beta^+)^4}{(\beta^-)^{5}} h^2 \| u - u_h \|_{L^2(\Omega)} \| f \|_{L^2(\Omega)}.
\end{equation}
For the second term on the right side of \eqref{thm_l2_error_bound_eq1}, using Theorem \ref{thm_interface_interface}, we obtain
\begin{equation}
\begin{split}
\label{thm_l2_error_bound_eq4}
 |b_h(u,\mathcal{I}_hz)|&  \le  Ch^2 \sum_{T\in\mathcal{T}^i_h} \| \beta^- \nabla u^-\cdot \mathbf{ n} \|_{L^2(\Gamma\cap T)} | \jump{\mathcal{I}_hz} |_{H^1(\Gamma\cap T)} \le  Ch^2 \| \beta^- \nabla u^-\cdot \mathbf{ n} \|_{L^2(\Gamma)} |  \jump{\mathcal{I}_hz} |_{H^1(\Gamma)}  \\
&
\le   Ch^2 \| \beta^- \nabla u^-\cdot \mathbf{ n} \|_{L^2(\Gamma)}\big( |  \jump{\mathcal{I}_hz-z} |_{H^1(\Gamma)} +  |  \jump{z} |_{H^1(\Gamma)}\big).
\end{split}
\end{equation}
Let $z^{\pm}_E$ be the Sobolev extensions of $z^{\pm}=z|_{\Omega^{\pm}}$ from $\Omega^{\pm}$ to $\Omega$. By the trace inequality Lemma 3.2 in \cite{2016WangXiaoXu} and \eqref{IFE_interp_error} on each interface element, we have
\begin{equation}
\begin{split}
\label{thm_l2_error_bound_eq5}
 |  \jump{\mathcal{I}_hz-z} |_{H^1(\Gamma)}  & \le   \sum_{T\in\mathcal{T}^i_h} \sum_{s=\pm} C (h^{-1/2}| \mathcal{I}_hz^s_E-z^s_E |_{H^1(T)} + h^{1/2}| \mathcal{I}_hz^s_E-z^s_E |_{H^2(T)} ) \\
&  \le C \frac{\beta^+}{(\beta^-)^2} h^{1/2}  \sum_{j=1,2}( |\beta^+z^+|_{H^j(\Omega^-)} +  |\beta^-z^-|_{H^j(\Omega^+)} )  
\le  C \frac{\beta^+}{(\beta^-)^2} h^{1/2} \| u - u_h \|_{L^2(\Omega)}
\end{split}
\end{equation}
where in the second inequality we have used the boundedness for Sobolev extensions. In addition, by the trace inequality from $\Gamma$ to $\Omega^{\pm}$, we have
\begin{equation}
\label{thm_l2_error_bound_eq6}
| \jump{z} |_{H^1(\Gamma)} \le  C \frac{1}{\beta^-} \sum_{j=1,2}( |\beta^+z^+|_{H^j(\Omega^-)} +  |\beta^-z^-|_{H^j(\Omega^+)} ) 
\le  C \frac{1}{\beta^-} \| u - u_h \|_{L^2(\Omega)}.
\end{equation}
Putting \eqref{thm_l2_error_bound_eq5} and \eqref{thm_l2_error_bound_eq6} into \eqref{thm_l2_error_bound_eq4} and applying the trace inequality to $\nabla u^-\cdot\bfn$ from $\Gamma$ to $\Omega^-$, we get
\begin{equation}
\label{thm_l2_error_bound_eq7}
|b_h(u,\mathcal{I}_hz)| \le  C \frac{\beta^+}{(\beta^-)^2} h^2 ( | \beta^- u^-_E |_{H^1(\Omega)}  + | \beta^- u^-_E |_{H^2(\Omega)}  ) \|  u - u_h \|_{L^2(\Omega)}.
\end{equation}
Substituting \eqref{thm_l2_error_bound_eq2}, \eqref{thm_l2_error_bound_eq3} and \eqref{thm_l2_error_bound_eq7} into \eqref{thm_l2_error_bound_eq1}, we have \eqref{thm_l2_error_bound_eq0}.
\end{proof}

\begin{remark}
We are able to specify how the error bound depends on the material property parameters $\beta^{\pm}$ at each step throughout the analysis. But it is important to note that the dependence on $\beta^{\pm}$ in the final estimates in Theorem \ref{thm_error_bound}, \eqref{rem_regularity_eq2} and Theorem \ref{thm_l2_error_bound} is due to the limitation of our analysis approach, since we do not observe such a severe effect from $\beta^{\pm}$ in computation. How to achieve the optimal error bound with respect to $\beta^{\pm}$ is an interesting topic in our future research.
\end{remark}


\section{Numerical Experiments}

In this section, we report some numerical experiments to demonstrate the performance of our IFE method. In the first three examples, we present artificial interface problems where we know the analytical function of the interface surface and the exact solution. In Example 4, we present a real-world interface model of which the interface has a dabbling-duck shape but only the cloud-point data are available.

\subsubsection*{Example 1 (Plane Interface: Recovering Exact Solutions)}
In the first example, we compare the performance of the PPIFE and the classic IFE methods when the exact solutions are contained in the IFE spaces. Let $\Omega = (-1,1)^3$ and consider a planar interface $\Gamma = \{(x,y,z)\in\Omega: \gamma(x,y,z) = 0\}$ where 
\[\gamma(x,y,z) = \frac{1}{\sqrt{2}}(x + z-\pi/10).\]
Let the exact solution be 
\begin{equation}
u(x,y,z) = 
\left\{
\begin{split}
&\frac{1}{\beta^-}\gamma(x,y,z)~~~& \text{in}~\Omega^- := \{(x,y,z)\in\Omega:\gamma(x,y,z)<0\},\\
&\frac{1}{\beta^+}\gamma(x,y,z)~~~& \text{in}~\Omega^+ := \{(x,y,z)\in\Omega:\gamma(x,y,z)>0\}.
\end{split}
\right.
\end{equation}
Our computation is carried out on a family of uniform Cartesian meshes consisting of $N^3$ cuboids. We report errors in the discrete $L^\infty$, $L^2$, and $H^1$-norms, denoted by $e^\infty_h$, $e^0_h$, and $e^1_h$, respectively. The errors for both PPIFE and IFE methods are reported in Table \ref{table: linear error table}. We note that the PPIFE method actually recovers the exact solutions with no approximation errors (only round-off errors are observed). This suggests that PPIFE is a consistent numerical algorithm; namely, if the exact solution is a piecewise linear function separated by a planar interface, then the PPIFE method will reproduce the exact solution. In contrast, the classical IFE method without penalty cannot generate exact solution due to the inconsistency caused by the discontinuities of IFE functions across interface faces. 
\begin{table}[h!]
\begin{small}
\centering
\begin{tabular}{|c| c c c |c c c| }
\hline
&\multicolumn{3}{c|}{PPIFE} &\multicolumn{3}{c|}{IFE}\\
\hline
N     &$e^\infty_h$   & $e^0_h$  & $e^{1}_h$ & $e^\infty_h$   & $e^0_h$  & $e^{1}_h$ \\
\hline 
10    &5.50E-15 & 2.29E-15 & 1.74E-14 & 7.23E-3 & 3.07E-3 & 4.95E-2 \\ 
20    &1.92E-15 & 6.37E-16 & 8.93E-15 & 2.75E-3 & 1.03E-3 & 2.47E-2 \\ 
30    &5.33E-15 & 3.87E-15 & 1.75E-14 & 1.87E-3 & 4.93E-4 & 2.24E-2\\
40    &3.11E-15 & 2.04E-15 & 1.73E-14 & 1.81E-3 & 5.65E-4 & 2.37E-2\\ 
\hline
\end{tabular}
\caption{PPIFE errors and the convergence rates for linear solution}
\label{table: linear error table}
\end{small}
\end{table}

\subsubsection*{Example 2 (Sphere Interface)}
In the second example, we let $\Omega = (-1,1)^3$ and let the interface be a sphere $\Gamma =\{(x,y,z): \gamma(x,y,z)=0\}$ where $\gamma(x,y,z) =x^2+y^2+z^2 -r^2$. The exact solution is given by
\begin{equation}
u(x,y,z) = 
\left\{
\begin{split}
&-\cos\left(\frac{\pi(x^2+y^2+z^2)}{2r^2}\right)~~~& \text{in}~\Omega^- := \{(x,y,z)\in\Omega:\gamma(x,y,z)<0\},\\
&x^2+y^2+z^2-r^2~~~& \text{in}~\Omega^+ := \{(x,y,z)\in\Omega:\gamma(x,y,z)>0\}.
\end{split}
\right.
\end{equation}
The parameters are chosen to be $r=\pi/4$ and $\beta^- = 1$, and $\beta^+ = \frac{\pi}{2r^2}\approx 2.5465$. Our computation is carried out on a family of uniform Cartesian meshes consisting of $N^3$ cuboids. We start from a coarse mesh with $N=20$ and stretch to a very fine mesh with $N=160$ by an increment of $10$ more partitions in each direction for each finer mesh. We report errors in the discrete $L^\infty$, $L^2$, and $H^1$-norms for both PPIFE and classical IFE methods. See Figure \ref{fig: convergence ex2} for a comparison of the performances in all three norms. Using linear regression, the errors of PPIFE solution obey
\[e_h^\infty\approx 9.898h^{2.034},~~~e_h^0\approx 9.093h^{2.004},~~~e_h^1\approx 9.017h^{0.999}.\]

Our numerical results show that the PPIFE method converge optimally in both $L^2$ and $H^1$ norms, which confirms our theoretical error bounds \eqref{thm_error_bound_eq0} and \eqref{thm_l2_error_bound_eq0}. We also observe that the convergence rate in $L^\infty$ norm is also optimal for the PPIFE method, although we don't know how to theoretically prove it yet. The PPIFE method clearly outperforms the classical IFE method since their errors in 
$L^\infty$ norm are much smaller than IFE method, and their convergence rates in $L^2$ or $H^1$ norm do not deteriorate as the mesh size becomes small. This is consistent with the observation for the 2D case \cite{2015LinLinZhang}. For a more visible comparison, we plot the errors of PPIFE and IFE methods on the interface surface itself in Figure \ref{fig: interface surface error}. Moreover, we plot the errors on five slices of the domain with $y = -0.7$, $-0.35$, $0$, $0.35$, and $0.7$ in Figure \ref{fig: slice error sphere}. From both of these figures, we can clearly see that the PPIFE errors around interface are significantly smaller and are comparable to the errors away from the interface. In contrary, the classical IFE solutions have much larger errors around the interface.

\begin{figure}[h!]
\centering
\includegraphics[width=.32\textwidth]{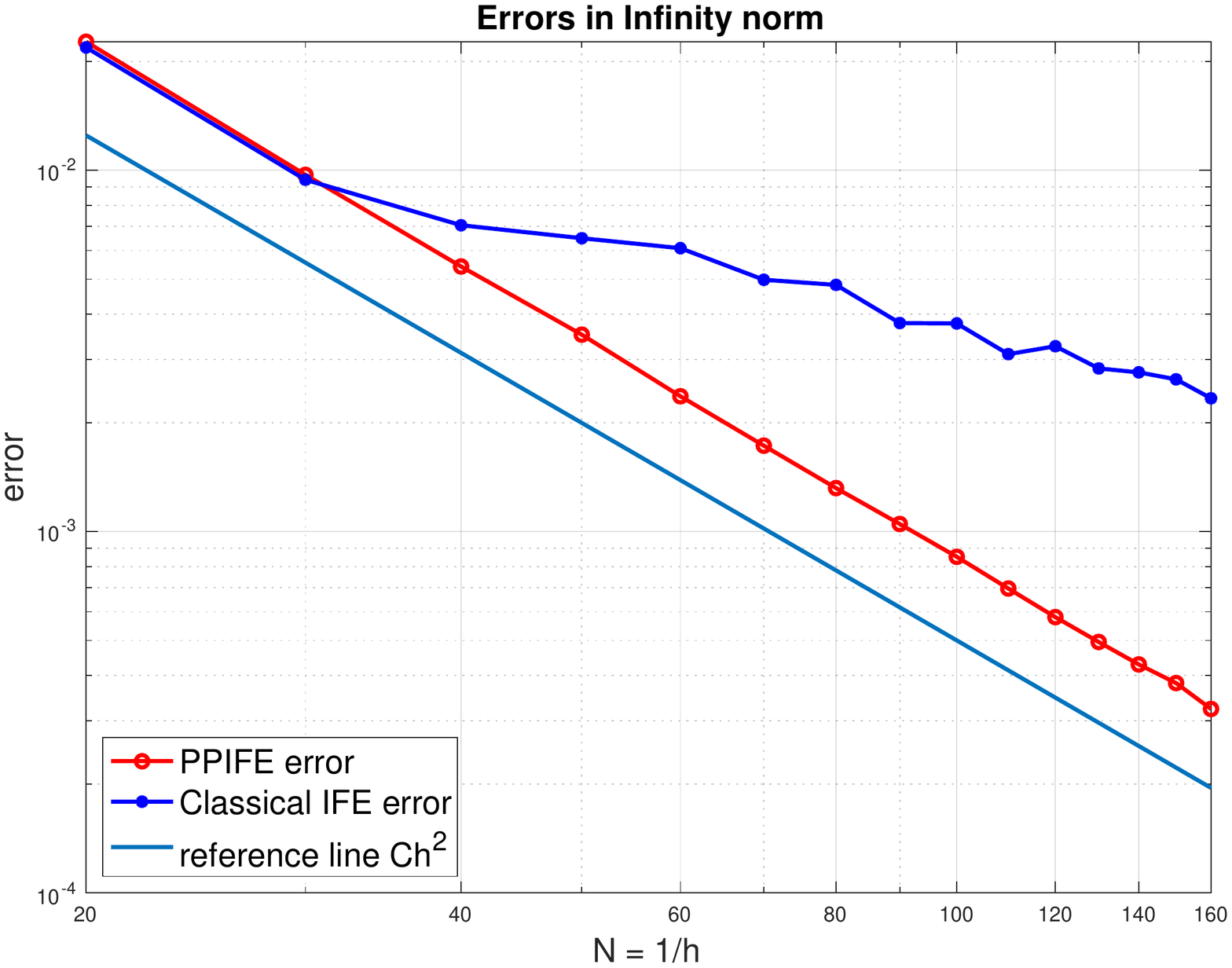}
\includegraphics[width=.32\textwidth]{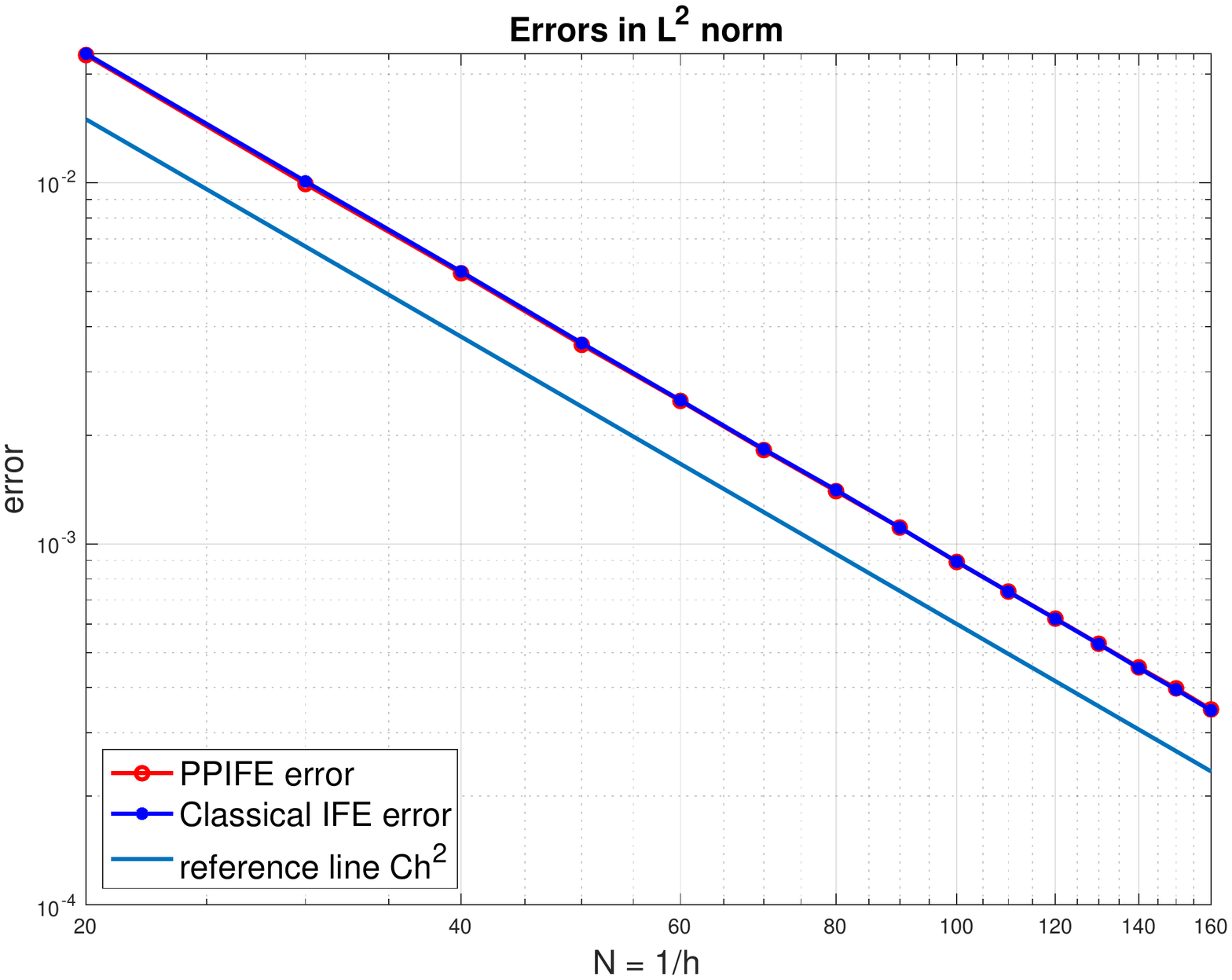}
\includegraphics[width=.32\textwidth]{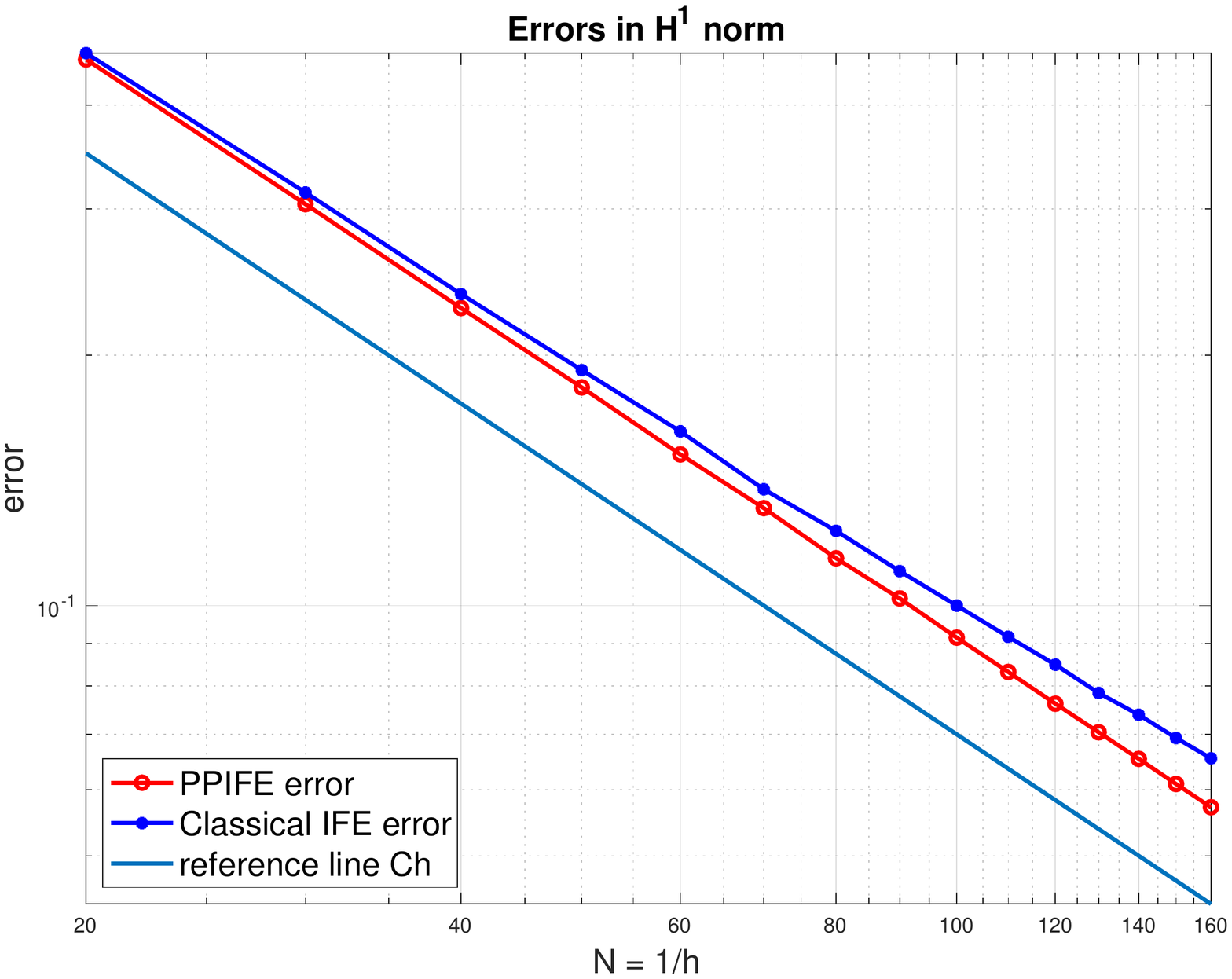}
\caption{Convergence in $L^\infty$, $L^2$, and $H^1$ norms of PPIFE and IFE solutions for Example 2.}
\label{fig: convergence ex2}
\end{figure}

\begin{figure}[h!]
\centering
\includegraphics[width=.3\textwidth]{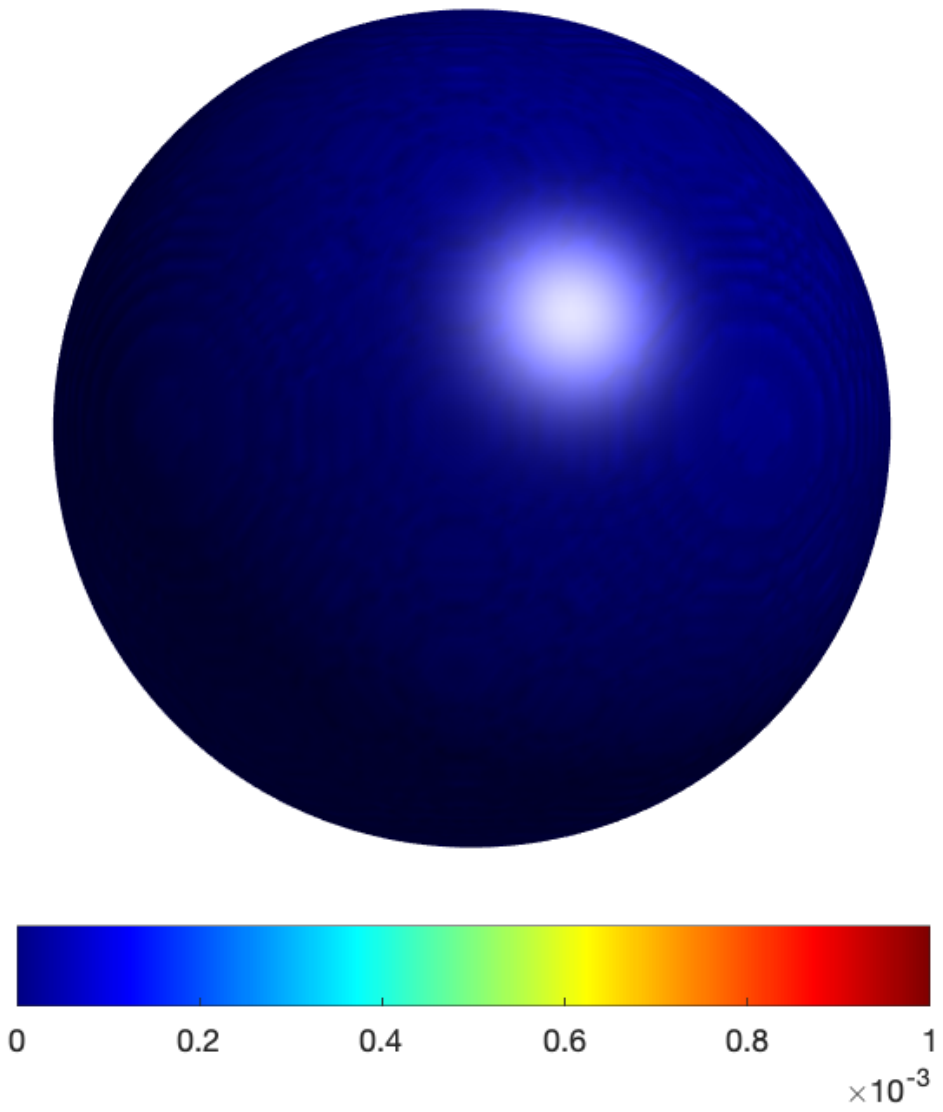}~~~~
\includegraphics[width=.3\textwidth]{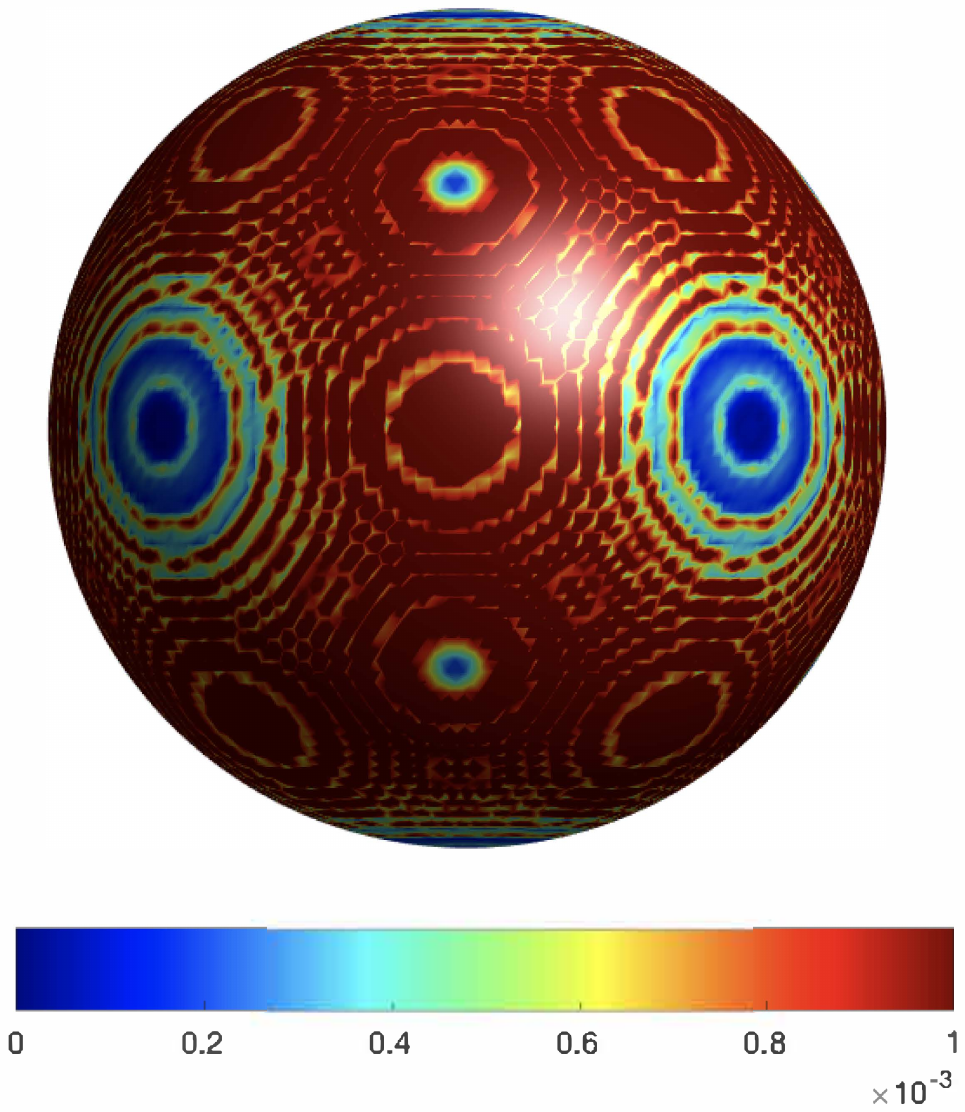}
\caption{A comparison of the PPIFE (left) and the classical IFE (right) errors on the interface surfaces for Example 2, (mesh size $N=100$).}
\label{fig: interface surface error}
\end{figure}

\begin{figure}[h]
\centering
\includegraphics[width=.49\textwidth]{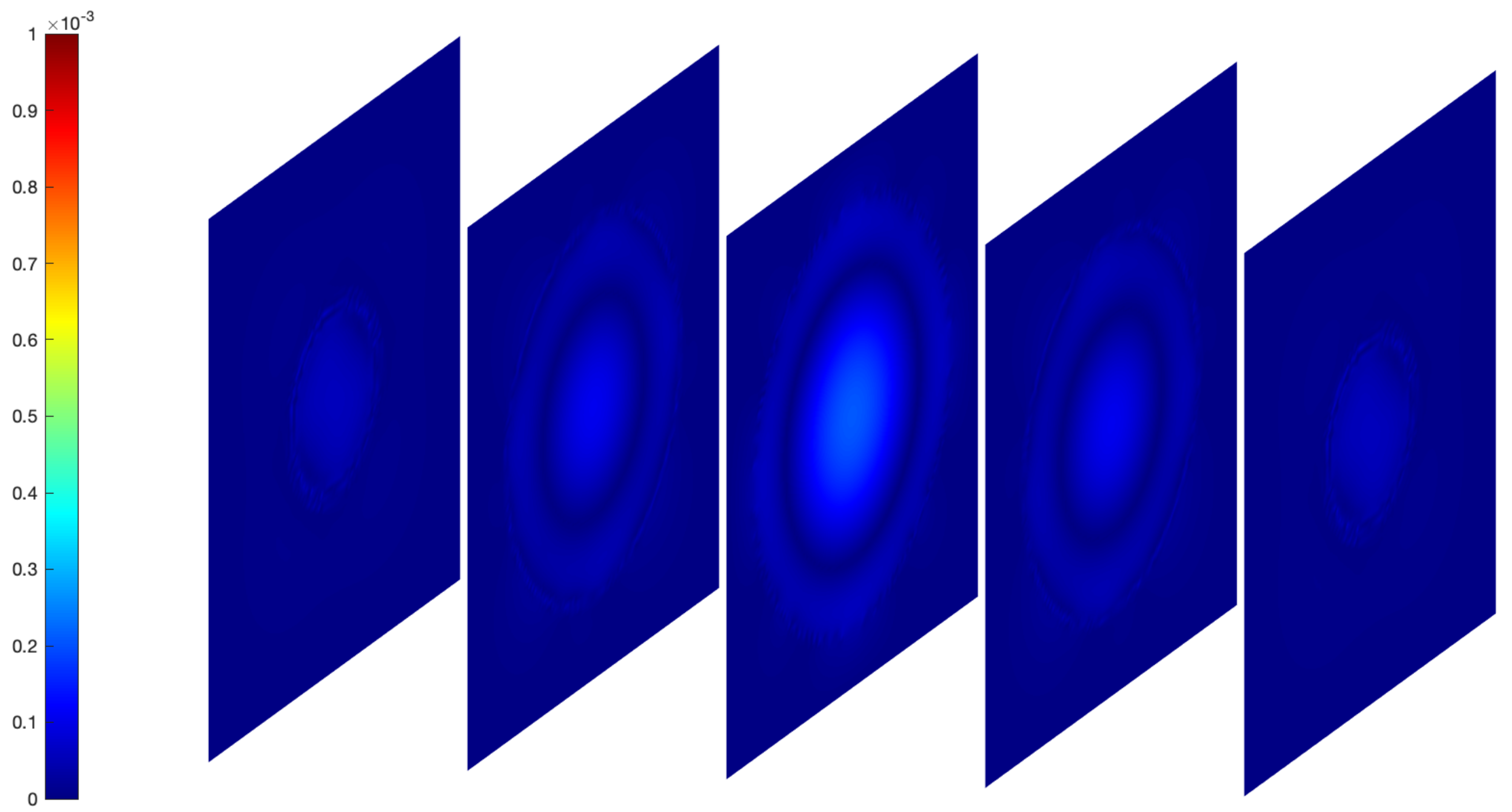}~~
\includegraphics[width=.49\textwidth]{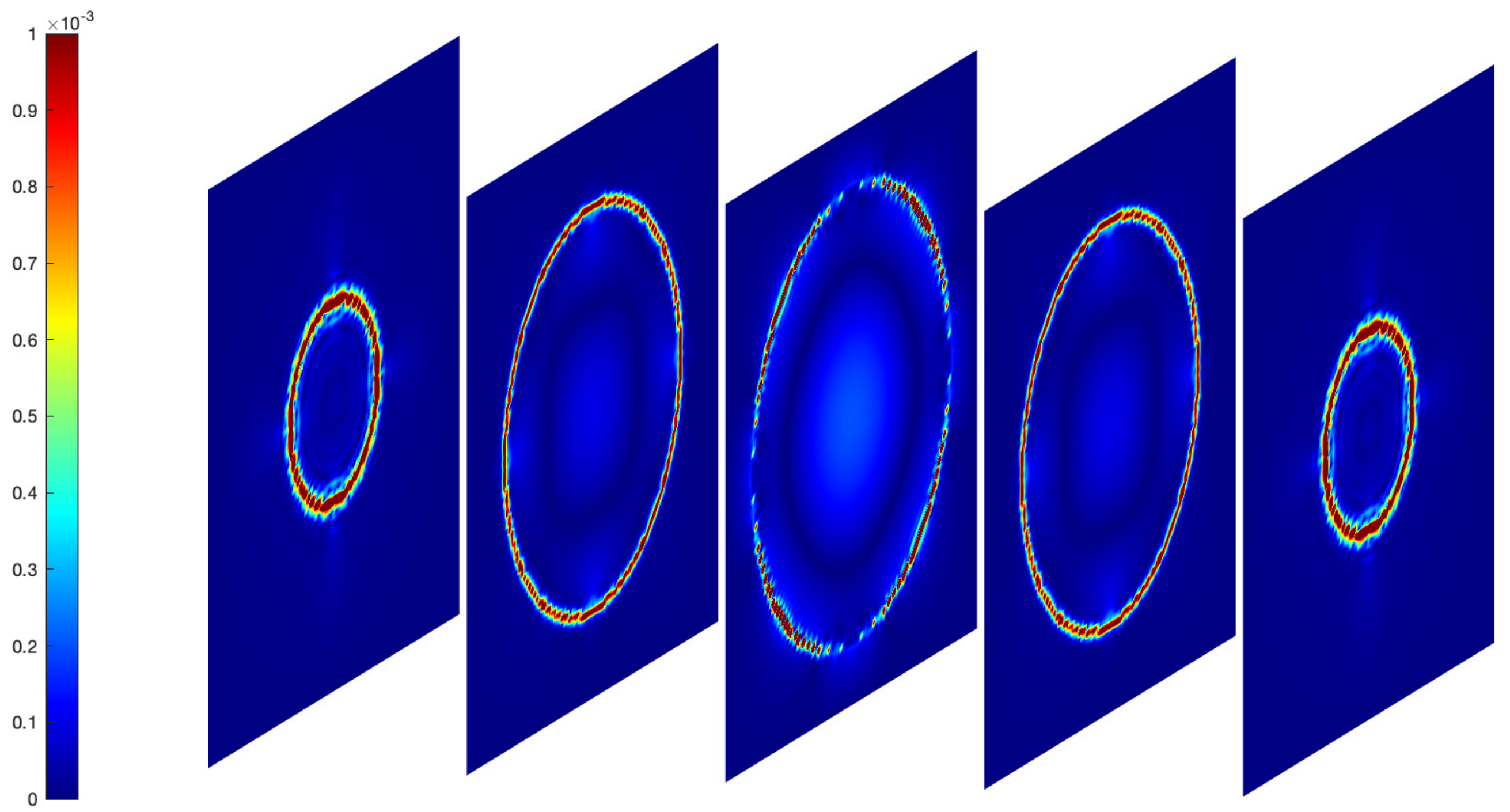}
\caption{A comparison of the PPIFE (left) and the classic IFE (right) errors on slices of the domain at $y = -0.7$, $-0.35$, $0$, $0.35$, and $0.7$ for Example 2, (mesh size $N=100$).}
\label{fig: slice error sphere}
\end{figure}

\subsubsection*{Example 3 (More Completed Topology: An orthocircle Interface)}
In this example, we consider an interface problem with more complicated topology. We let
$\Omega = (-1.2,1.2)^3$, and let the interface be $\Gamma = \{(x,y,z)\in\Omega: \gamma(x,y,z) = 0\}$ where
\[\gamma(x,y,z) = [(x^2+y^2-1)^2+z^2][(x^2+z^2-1)^2+y^2][(y^2+z^2-1)^2+x^2]-0.075^2[1+3(x^2+y^2+z^2)].\]
The shape of the interface is plotted in the left plot of Figure \ref{fig: orthocircle}. This interface problem was reported in \cite{2017ChenWeiWen}. 
Let the exact solution be 
\begin{equation}
u(x,y,z) = 
\left\{
\begin{split}
&\frac{1}{\beta^-}\gamma(x,y,z)~~~& \text{in}~\Omega^- := \{(x,y,z)\in\Omega:\gamma(x,y,z)<0\},\\
&\frac{1}{\beta^+}\gamma(x,y,z)~~~& \text{in}~\Omega^+ := \{(x,y,z)\in\Omega:\gamma(x,y,z)>0\}.
\end{split}
\right.
\end{equation}
The coefficients are chosen to have a larger contrast as $\beta^-=1$ and $\beta^+=100$. The errors of the PPIFE method in all three norms are reported in Figure \ref{fig: convergence ex3}. Again, we can see that overall convergence rates in $L^2$ and $H^1$ norms are close to optimal, which confirms our theoretical results. Using linear regression, the errors obey
\[e_h^\infty\approx 1.365h^{1.340},~~~e_h^0\approx 5.848h^{1.877},~~~e_h^1\approx 7.276h^{1.101}.\]
For comparison, we also report the solutions without imposing the maximum angle condition in Section \ref{sec:model}. See the blue curves in Figure \ref{fig: convergence ex3}. Although we can still see the convergence in all three norms, the magnitudes of errors are larger than the those enforced by the maximum angle condition, see the red curves in Figure \ref{fig: convergence ex3}. We also compare the error surfaces of these two solutions on the interface. See the middle and right plots in Figure \ref{fig: orthocircle}. It can be observed that errors are larger on the interface when the maximal angle condition is not satisfied. These large errors are indicated by the red spots in the right plot in Figure \ref{fig: orthocircle} which are also where the maximal conditions are violated.

\begin{figure}[h!]
\centering
\includegraphics[width=.27\textwidth]{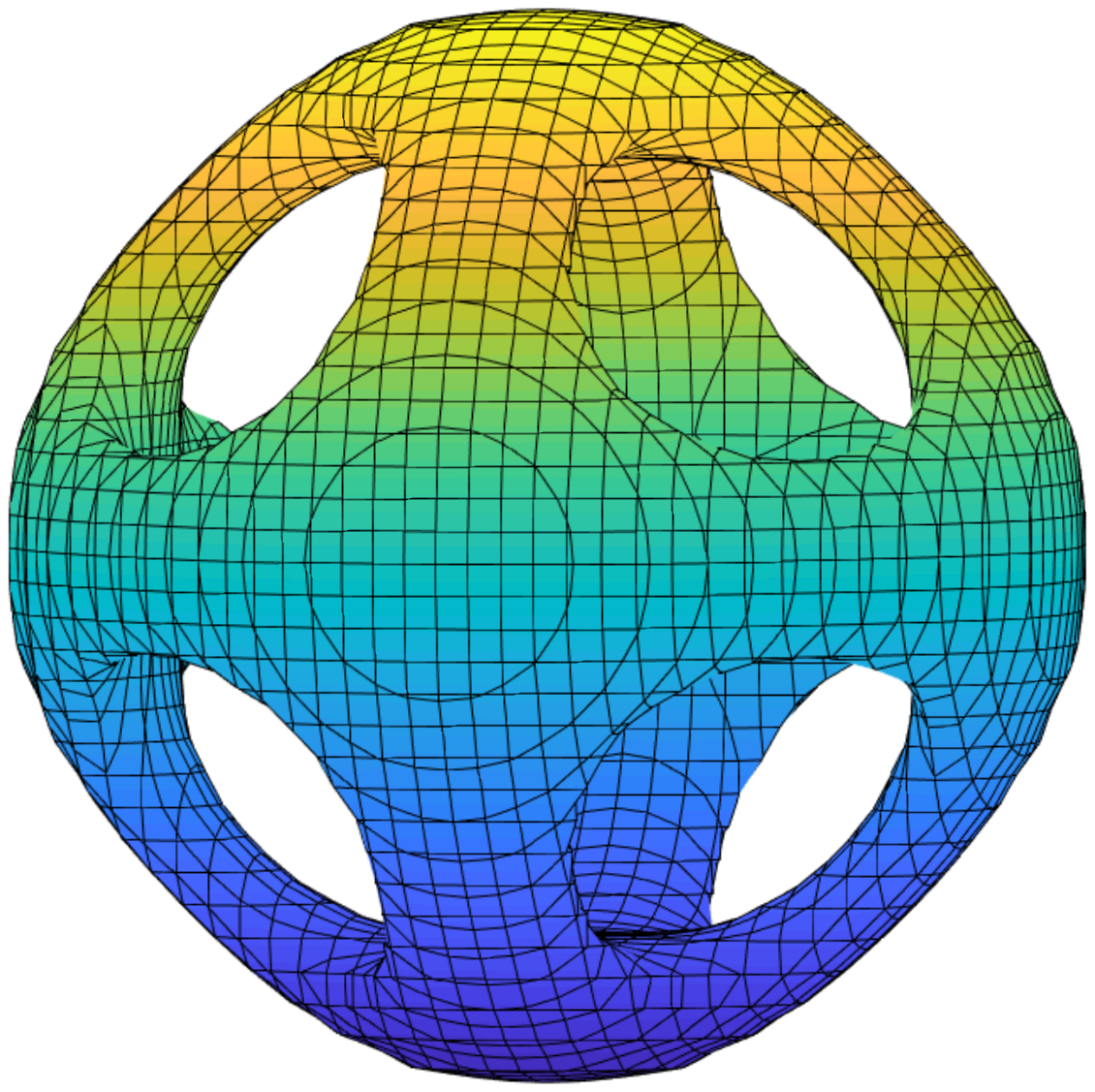}~~~~~~
\includegraphics[width=.32\textwidth]{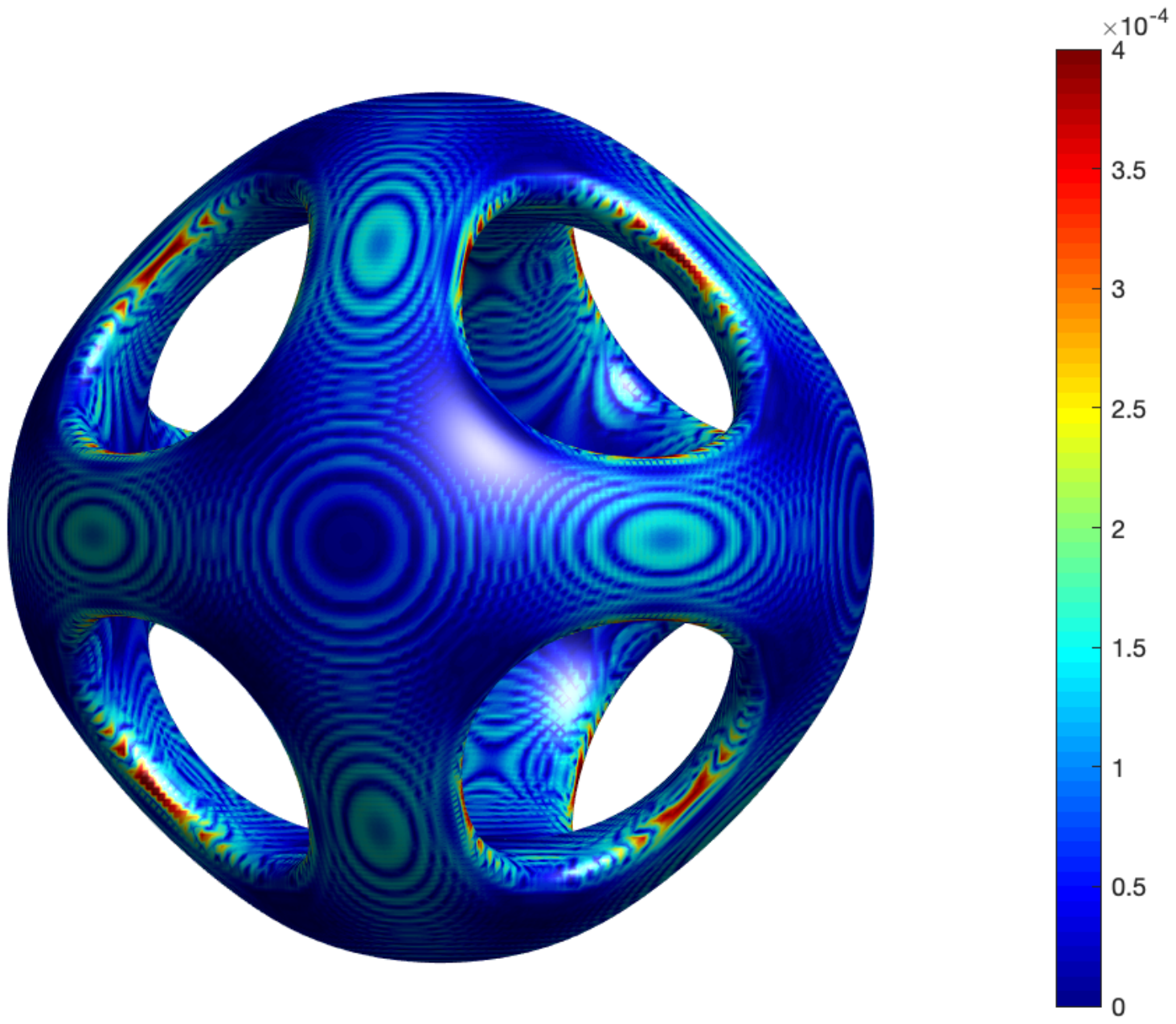}~
\includegraphics[width=.32\textwidth]{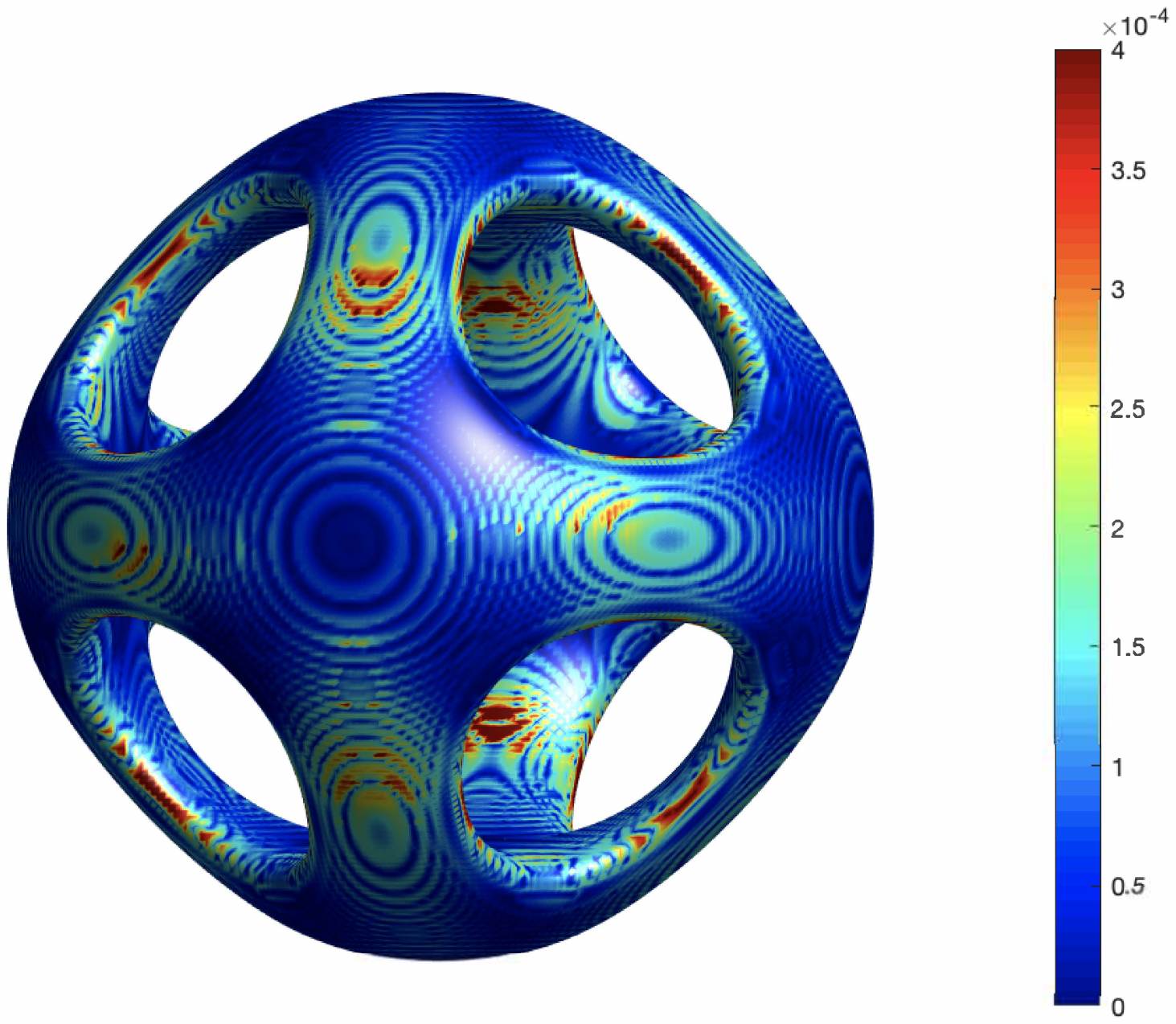}
\caption{A plot of the orthocircle interface (left). The error surfaces of PPIFE solutions with (middle) and without (right) imposing the maximal angle condition (mesh size $N=160$).}
\label{fig: orthocircle}
\end{figure}


\begin{figure}[h!]
\centering
\includegraphics[width=.32\textwidth]{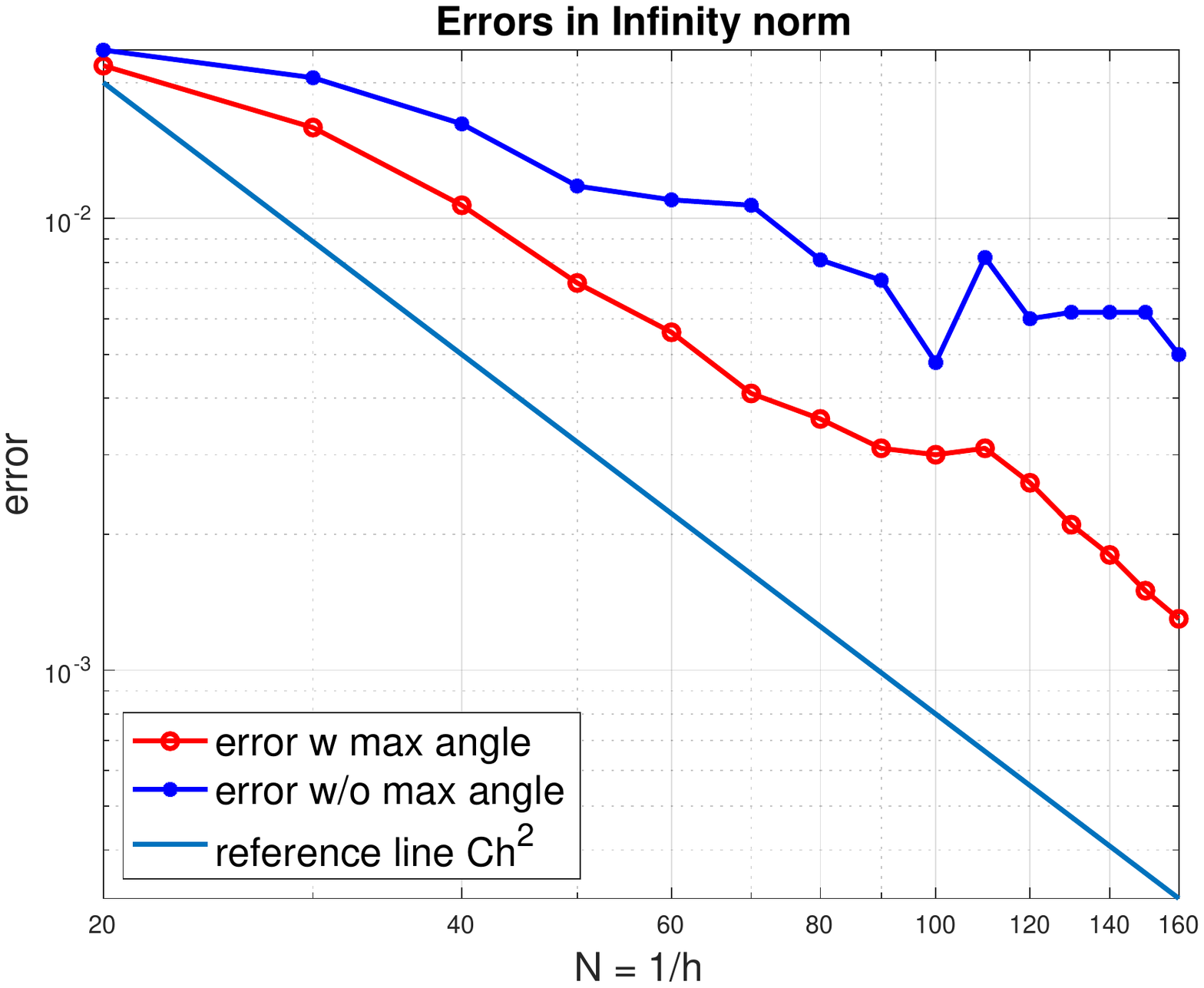}
\includegraphics[width=.32\textwidth]{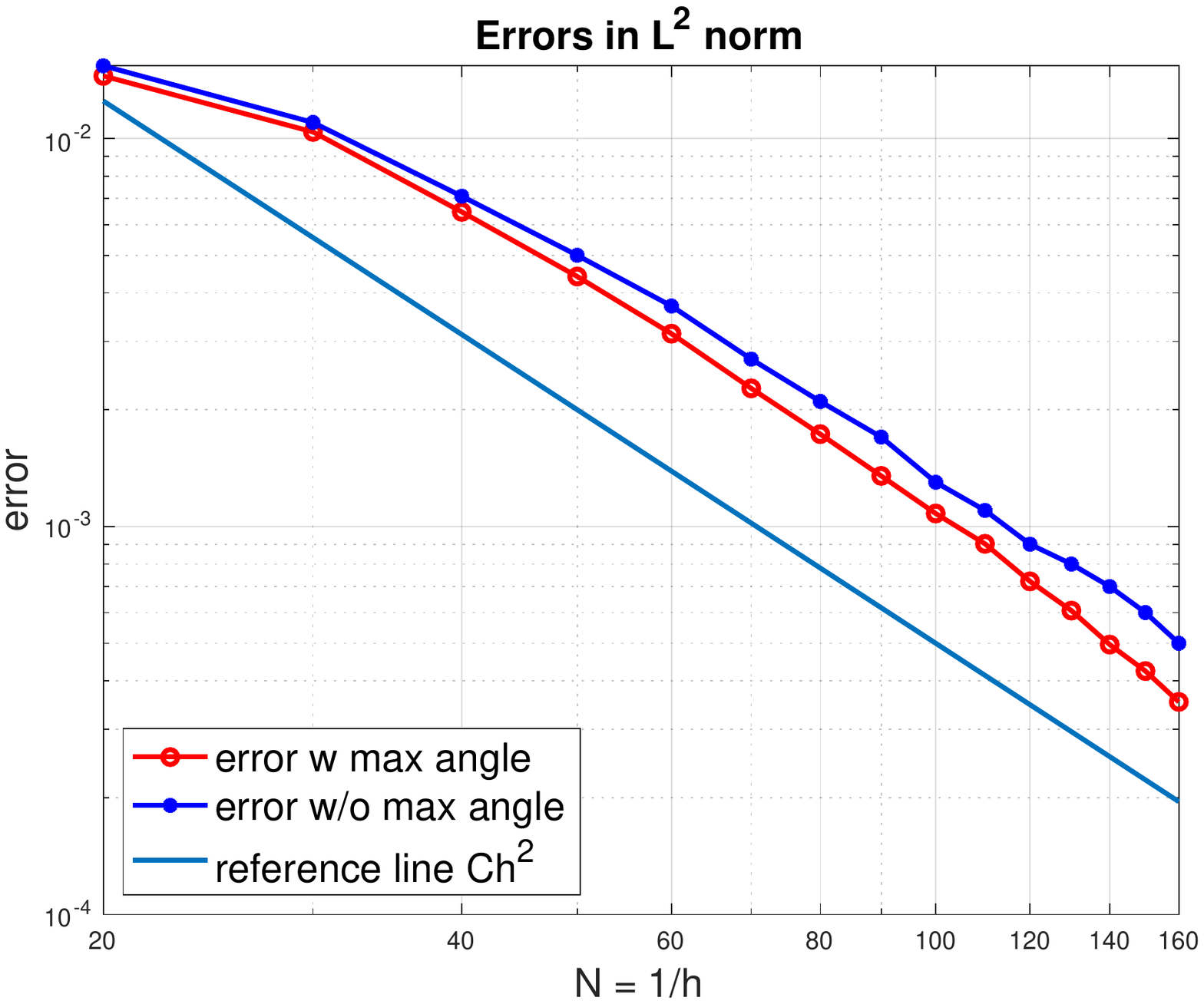}
\includegraphics[width=.32\textwidth]{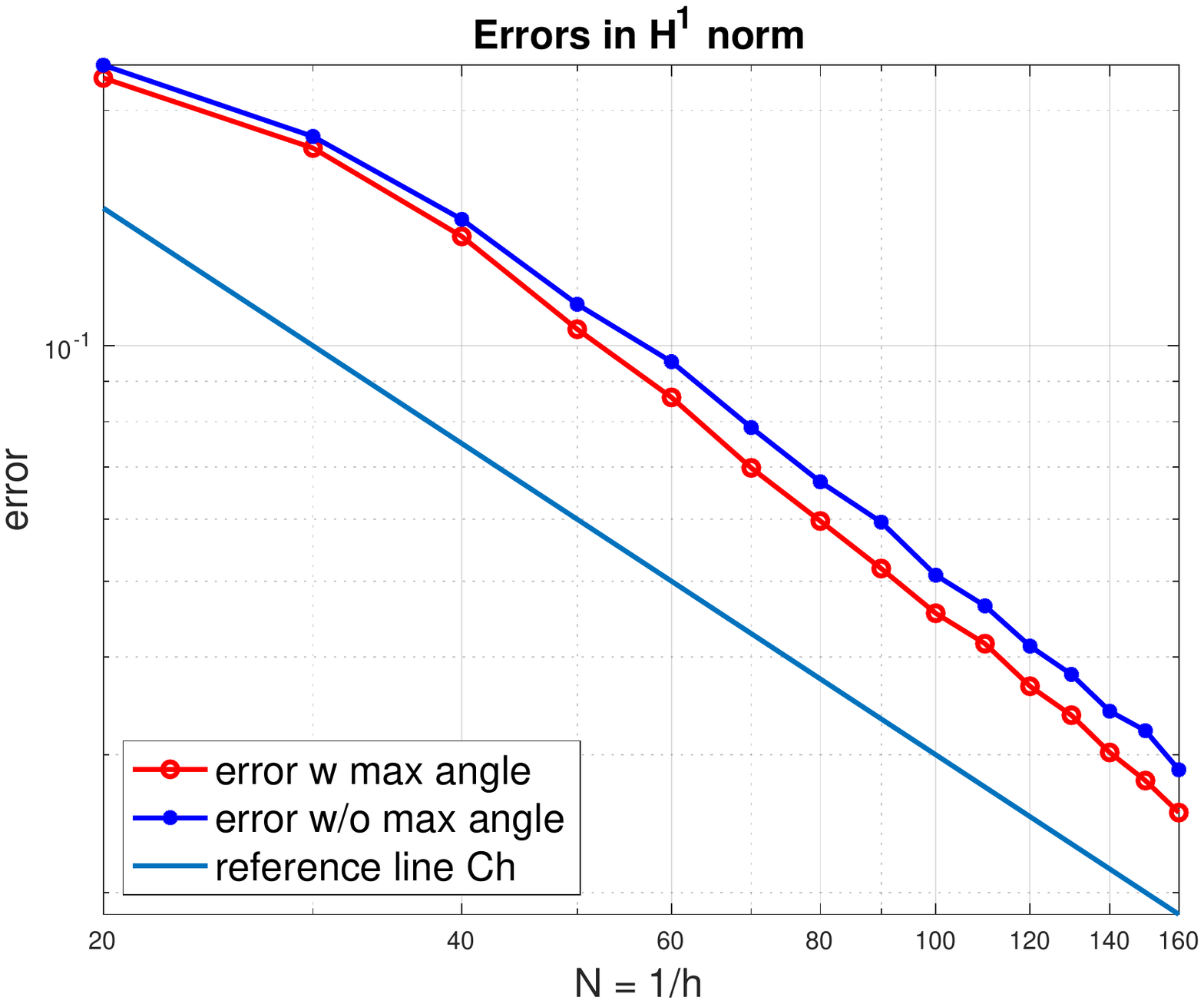}
\caption{Convergence of Example 3}
\label{fig: convergence ex3}
\end{figure}

Moreover, since the only extra work of the IFE method is to replace the standard shape functions by some special shape functions on interface elements, we report the percentage of interface elements over all elements, defined by $|\mathcal{T}_h^i|/|\mathcal{T}_h|$, for all three examples. The number of interface elements is expected to be $\mathcal{O}(N^2)$, and the number of all elements is $\mathcal{O}(N^3)$, so the percentage should be a linear function of the mesh size $h=1/N$. In Figure \ref{fig: interface percentage}, we can observe this linear relationship clearly. Also, as the shape of interface elements becomes more complex from Example 1 to Example 3, the proportionality constant gets larger. However, even for complicated interface shapes, such as the orthocircle in Example 3, there are only less than $3\%$ interface elements on our finest mesh ($N=160$, around 4 million cuboids). As a result, the majority of the computation (over $97\%$ of the total elements) can be done using the standard FEM package.

\begin{figure}[h!]
\centering
\includegraphics[width=.5\textwidth]{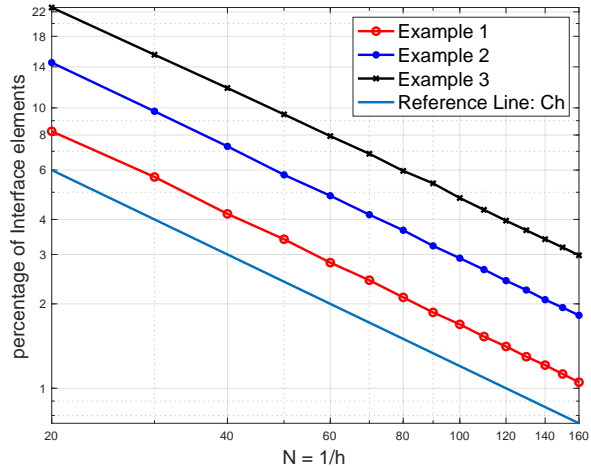}
\caption{Percentage of interface elements for Examples 1-3.}
\label{fig: interface percentage}
\end{figure}

\subsubsection*{Example 4 (A Real-World Interface: Dabbling Duck)}

In this example, we apply our algorithm to a real-world geometric object, the surface of a dabbling duck shown in Figure \ref{fig:duck}. The original data of this interface consists of many cloud points on the surface as shown in the left plot of Figure \ref{fig:duck}. We refer readers to \cite{2011RouhaniSappa} for the availability of the data. We perform the computation on the modeling domain $\Omega=(0.2,1)\times(0.2,1)\times(0.1,0.9)$ which is large enough to contain all the data points. The fundament step in the computation is to generate a smooth surface based on the raw data points. Since only the lowest order accuracy is considered in this article, here we generate a signed-distance function by directly computing the distance from nodes in a given mesh to those data points. Then the zero level-set of the signed-distance function is used as the computational interface in this example.

We consider the equation \eqref{model} with the data $f=0$ in $\Omega$, $u=\sin(3\pi x)\sin(3\pi y)\sin(3\pi z)$ on $\partial \Omega$ and $\beta^-=1$, $\beta^+=10$. The Cartesian mesh with $N^3$ cuboids is generated on $\Omega$ with the mesh size $N=16,32,64,128,256$. We note that it is difficult to construct a function satisfying the homogeneous jump condition exactly on this complicated real-world interface, so here we shall use the numerical solution computed on the finest mesh $N=256$ as the reference solution to compute the errors. The numerical errors and their convergence order are presented in Table \ref{table:duck_err_rate} where we can observe that the numerical solution errors almost have the expected optimal convergence order in $L^2$ and $H^1$ norms. It agrees with the theoretical analysis even for this interface generated from the real-world data.

\begin{figure}[h!]
\centering
\begin{subfigure}{.52\textwidth}
     \hspace{.8in}\includegraphics[width=2.6in]{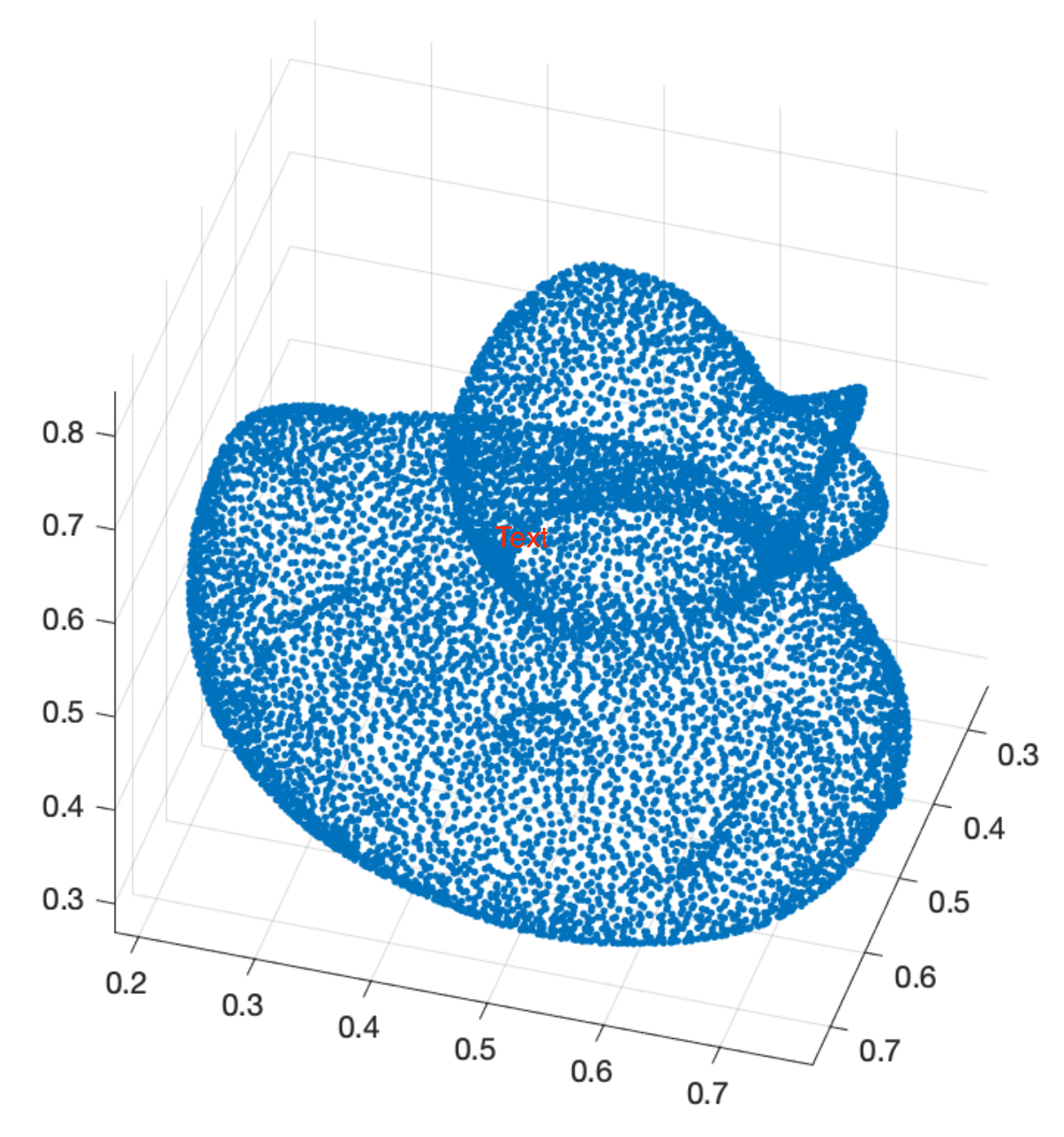}
\end{subfigure}
~
\begin{subfigure}{.45\textwidth}
     \includegraphics[width=2in]{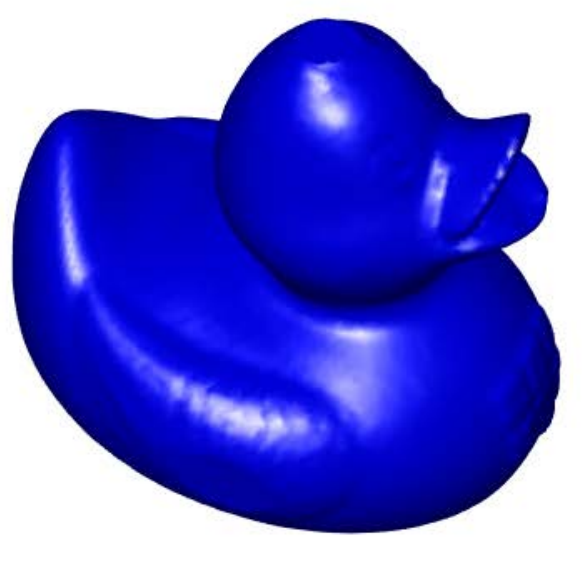}
\end{subfigure}
\caption{A duck-shape interface: cloud points(left) and reconstructed smooth interface (right)}
  \label{fig:duck} 
\end{figure}

\begin{table}[h!]
\centering
\begin{tabular}{c c c c c c c c c}
\hline
      & $e^0_h$ & order & $e^{1}_h$ & order \\\hline 
16   & 5.6418E-3 & NA    & 5.4544E-1 & NA \\ 
32    & 2.0913E-3 & 1.43 & 2.5933E-1 & 1.07 \\ 
64    & 5.1039E-4 & 2.03 & 1.2048E-1 & 1.11 \\ 
128   & 9.5244E-5 & 2.42 & 5.5962E-2 & 1.11 \\\hline
\end{tabular}
\caption{Numerical solution errors and the convergence rates}
\label{table:duck_err_rate}
\end{table}

In addition, we plot the error of IFE solution on $N=128$ at the slices at $y=0.35$ ,$0.4$, $0.45$, $0.5$, $0.55$, and $0.6$ in Figure \ref{fig:duck_err_slice}. We note that the majority of the errors are concentrated on the interface, and the errors are significantly smaller away from the interface. This phenomenon is also observed in Examples 2 and 3 for which the analytical solutions are available. We believe this is due to the advantage of the highly structured mesh such as the Cartesian mesh that the IFE method can use to solve interface problems. In particular, we also note that several spots near the interface have apparently large errors which are the head top, beak, tail and the front and back of the duck base. These portions of the interface certainly have large curvatures, namely the interface is bending severely. To further investigate how the shape of the interface can effect the errors, we plot the relative errors on the interface in Figure \ref{fig:duck_err_interface}. As indicated by these figures, the errors are concentrated on the portion of the interface including the peak, neck (the lower-right plot in Figure \ref{fig:duck_err_interface}), tail (the lower-left plot in Figure \ref{fig:duck_err_interface}) and the surrounding of the base (the upper-right plot in Figure \ref{fig:duck_err_interface})). But we also emphasize that the IFE method performs quite satisfactory on the majority part of the interface surface. The large curvatures can not be avoided in real-world geometric bodies. How to further enhance the performance of IFE methods on the large-bending surface may require local mesh refinement, i.e., some adaptive mesh strategy for IFE method \cite{2019HeZhang}. This could be an interesting topic in our future research.

\begin{figure}[h!]
\centering
\includegraphics[width=.95\textwidth]{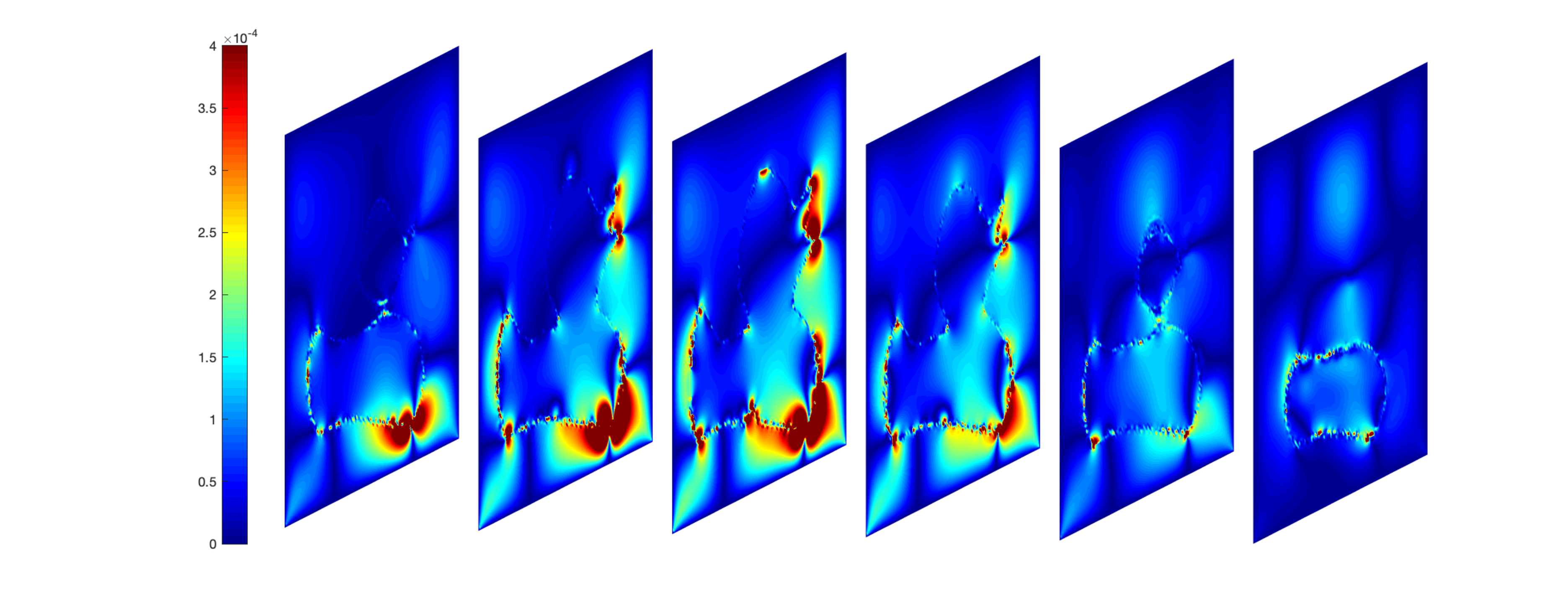}
\caption{The slices of errors between $N=128$ and $N=256$ on $y=0.35,0.4,0.45,0.5,0.55,0.6$}
  \label{fig:duck_err_slice} 
\end{figure}

\begin{figure}[h!]
\centering
\begin{subfigure}{.4\textwidth}
     \includegraphics[width=2in]{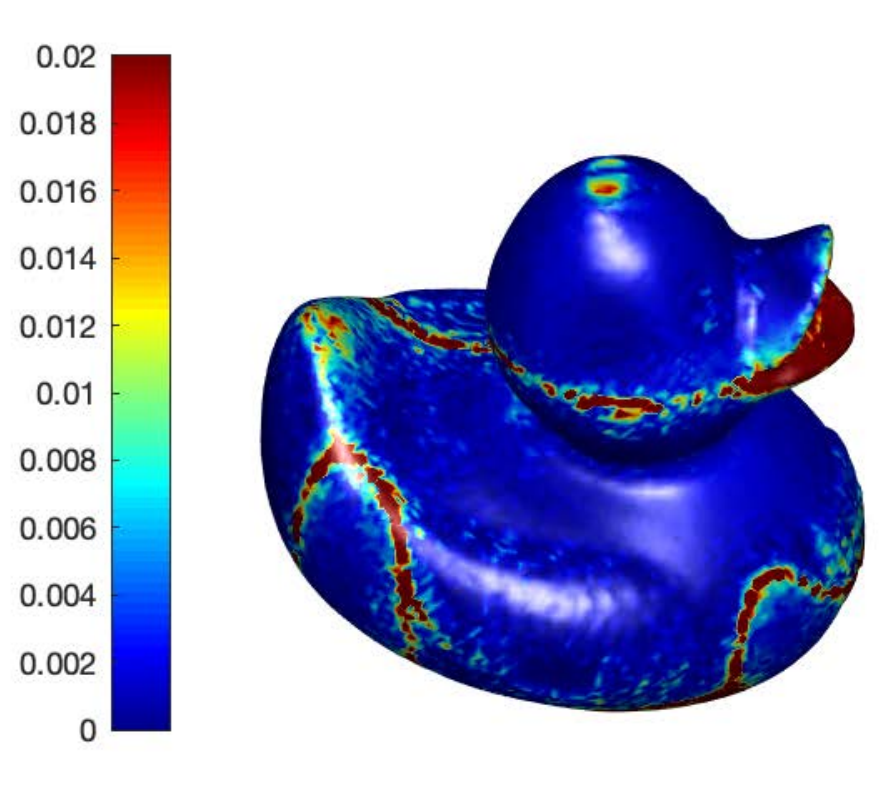}
\end{subfigure}
~~~~~~
\begin{subfigure}{.4\textwidth}
     \includegraphics[width=2in]{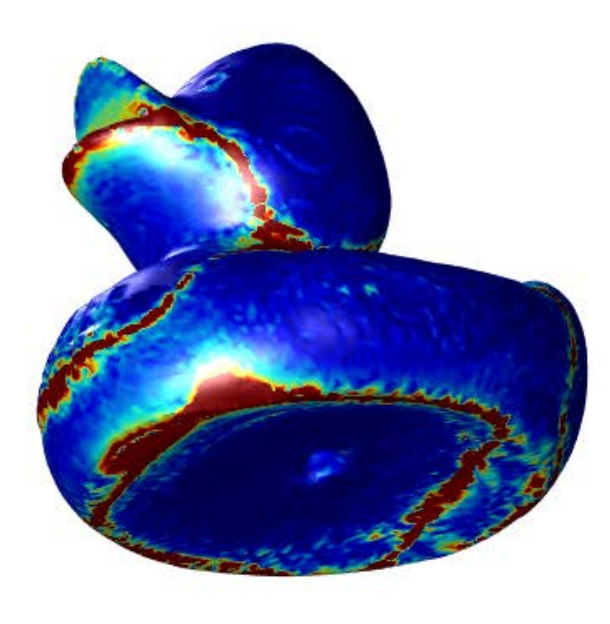}
\end{subfigure}
\vskip\baselineskip
\begin{subfigure}{.4\textwidth}
     \includegraphics[width=2in]{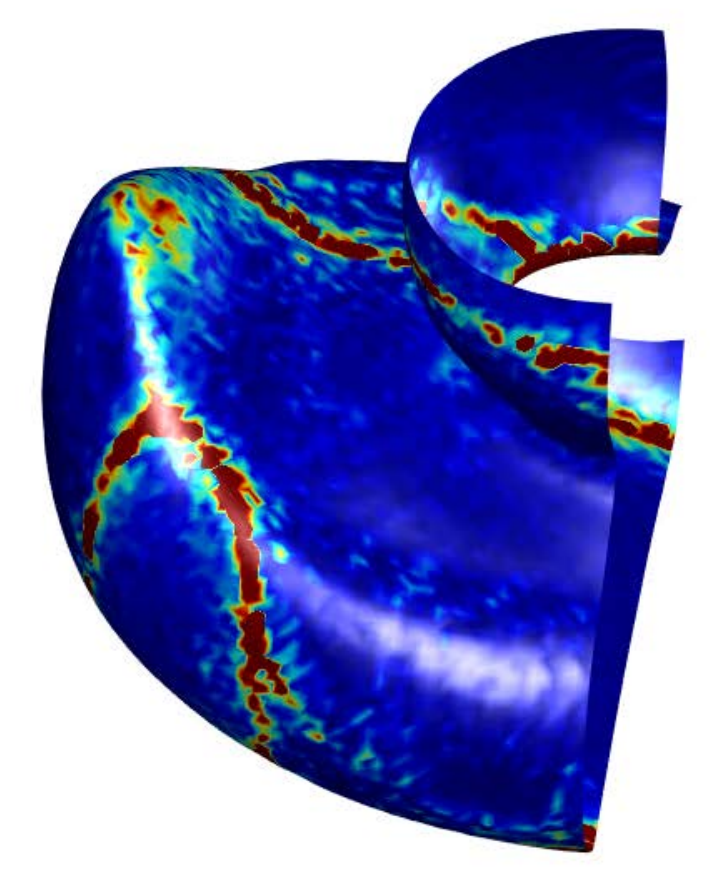}
\end{subfigure}
~~~~~~
\begin{subfigure}{.4\textwidth}
     \includegraphics[width=2in]{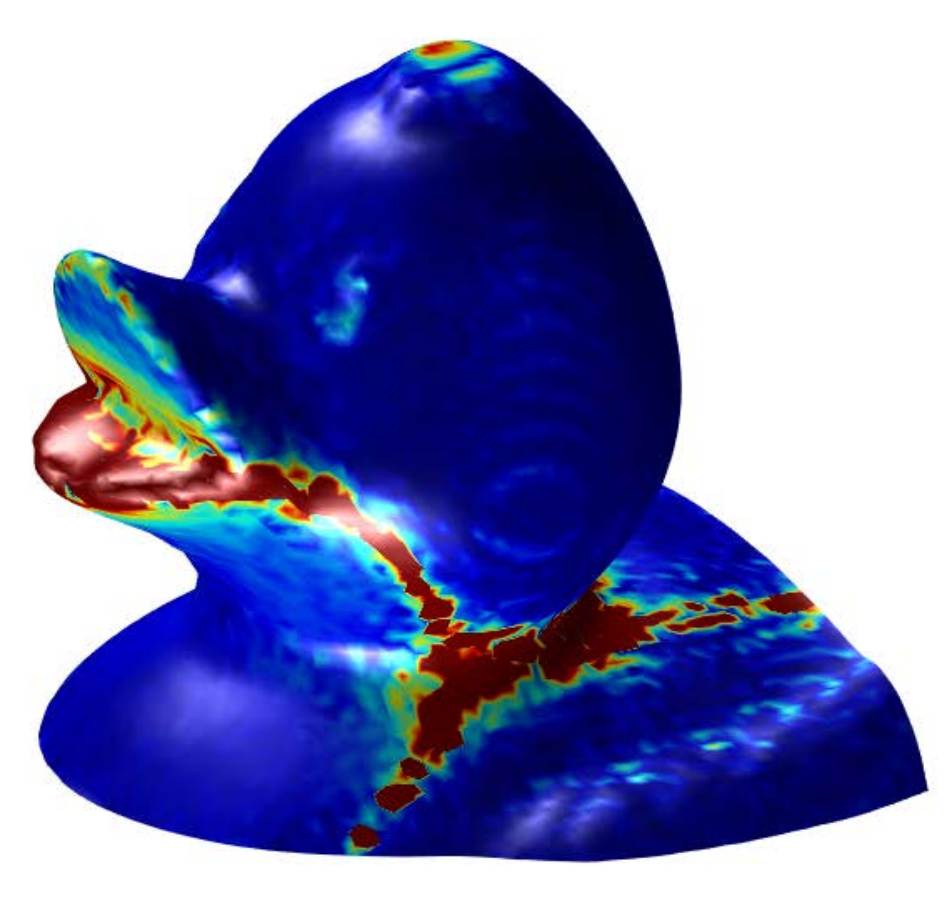}
\end{subfigure}
\caption{Relative solution errors of $N=128$ on interface}
  \label{fig:duck_err_interface} 
\end{figure}

\section{Conclusions}
\label{sec:conclusions}
In this article, we have developed a partially penalized IFE (PPIFE) method for solving elliptic interface problems in three-dimensional space on unfitted meshes. The IFE space is isomorphic to the standard continuous piecewise trilinear finite element space defined on the same mesh, which is independent of the interface location. The penalties are only added on interface faces to handle the discontinuities of IFE functions. We show the PPIFE solutions have optimal convergence rates in both the $L^2$ and $H^1$ norm regardless of interface location. Numerical experiments are performed to validate the theoretical estimates for both artificial interface and real-world interface models.

\section*{Acknowledgments}
The work is partially supported by National Science Foundation Grants DMS-1720425, DMS-2005272.

\end{document}